\documentclass{imsart}

\RequirePackage{amsthm,amsmath,amsfonts,amssymb}
\RequirePackage[authoryear]{natbib}
\RequirePackage[colorlinks,citecolor=blue,urlcolor=blue]{hyperref}
\RequirePackage{graphicx}

\startlocaldefs
\usepackage{enumitem}
\numberwithin{equation}{section}
\theoremstyle{plain}
\newtheorem{theo}{Theorem}[section]
\newtheorem{prop}[theo]{Proposition}
\newtheorem{lem}[theo]{Lemma}

\theoremstyle{definition}

\newtheorem{exam}[theo]{Example}
\newtheorem{remark}[theo]{Remark}

\newcommand{\e}{\mathbb{E}}
\newcommand{\E}{\mathbb{E}}
\newcommand{\va}{\mathrm{Var}}
\newcommand{\p}{\mathbb{P}}
\newcommand{\N}{\mathbb{N}}
\newcommand{\R}{\mathbb{R}}
\newcommand{\JJ}{\mathcal{J}}
\newcommand{\DD}{\mathcal{D}}
\newcommand{\A}{\mathcal{A}}

\newcommand{\II}{\mathcal{I}} 

\newcommand{\Aut}{\mathrm{Aut}}

\endlocaldefs

\begin{document}

\begin{frontmatter}
\title{Refined Berry-Esseen bounds under local dependence}
\runtitle{Refined Berry-Esseen bounds under local dependence}

\begin{aug}
\author[A]{\fnms{Zhi-jun}~\snm{Cai}\ead[label=e1]{122312911@mail.sustech.edu.cn}},
\author[B]{\fnms{Qi-man}~\snm{Shao}\ead[label=e2]{shaoqm@sustech.edu.cn}}
\and
\author[B]{\fnms{Zhuo-Song}~\snm{Zhang}\ead[label=e3]{zhangzs3@sustech.edu.cn}}
\address[A]{Department of Statistics and Data Science, Southern University of Science and Technology, Shenzhen, China,\\\printead[presep={ \ }]{e1}}

\address[B]{Department of Statistics and Data Science \textit{and}
 Shenzhen International Center for Mathematics, Southern University of Science and Technology, Shenzhen, China\\\printead[presep={\ }]{e2,e3}}
\end{aug}

\begin{abstract}
	In this paper, we establish Berry--Esseen bounds for both self-normalized and non-self-normalized sums of locally dependent random variables. The proofs are based on Stein's method together with a concentration inequality approach. We develop a new class of concentration inequalities that extend classical results and achieve optimal convergence rates under more general dependence structures. 
As applications, we apply our main results to derive sharper Berry--Esseen bounds for graph dependency, distributed $U$-statistics, constrained $U$-statistics, and decorated injective homomorphism sums.
\end{abstract}

\begin{keyword}[class=MSC]
\kwd[Primary ]{60F05}
\kwd[; secondary ]{62E20}
\end{keyword}

\begin{keyword}
\kwd{Berry--Esseen bound}
\kwd{Local dependence}
\kwd{Self-normalization}
\kwd{Distributed $U$-statistics} 
\kwd{Constrained $U$-statistics}
\kwd{Graph dependency}
\end{keyword}

\end{frontmatter}

\section{Introduction}
The study of central limit theorems (CLTs) for dependent random variables has attracted significant attention in probability theory, both for its theoretical interest and practical applications. Several types of dependence have been studied, including martingales \citep{levy1935proprietes}, mixing sequences \citep{PNAS...42...43R,Bolthausen1982}, and locally dependent random variables \citep{Hoeffding1948,ChenandShao}. In general, a family of locally dependent random variables means that random variables within a subset are independent of random variables outside their neighborhood. In this paper, we focus on the CLT for the sum of  locally dependent random variables, and in particular, we establish refined Berry–Esseen type bounds under  the following local-dependence assumptions.
Let $[n]=\{1,2,\cdots,n\} $ be an index set and let  $\{X_i: i \in [n]\}$ be a family of mean-zero random variables. We assume
\begin{itemize}
    \item [(LD1)] For any $i\in[n]$, there exists $A_i\subset[n]$ such that
        $X_i$ is independent of $\{X_j: j\notin A_i\}$.
    \item [(LD2)] For any $i\in[n]$ and $j\in A_i$, there exists $A_{ij}\supset A_i$
        such that $\{X_i,X_j\}$ is independent of $\{X_k: k\notin A_{ij}\}$.
\end{itemize} \par 
The traditional approach for normal approximation relies on the characteristic function method. However, without independence assumptions, it is often difficult to obtain optimal bounds using this approach. In contrast, Stein's method is effective for dependent structures and often yields sharper bounds than traditional approaches.
 An important contribution in this direction is due to \cite{ChenandShao}, who first applied Stein’s method to normal approximation for locally dependent random variables.
Let $S_{n}=\sum_{i\in [n]} X_{i}$, they worked under (LD1) above together with the following condition:
\begin{itemize}
\item[(LD2$'$)] For any $i \in [n]$, there exists $B_i \supset A_i$ such that $\{X_k: k \in A_i\}$ is independent of $\{X_l: l \notin B_i\}$
\end{itemize}
and established a sharp Berry–Esseen bound for $S_{n}$. Building on this framework, later works have further extended these results. For example, \cite{liuandzhang2023cramer-typemoderate} obtained Carm\'{e}r-type moderate deviation results, and \cite{zhang2021berryesseen} derived  Berry–Esseen bounds for self-normalized sums of locally dependent random variables under (LD1) and (LD2$'$). Recently, under (LD1) and (LD2$'$) together with some additional  conditions, \cite{SuUlyanovWang2025PerturbStein} derived an upper bound on the total variation distance,  where the approximating random variable has a distribution that is a Poisson mixture with either a binomial or a negative binomial mixing distribution.\par
Although the dependence structures satisfying (LD1) and (LD2$'$) have found several applications, such as $m$-dependence, they are still not adequate for certain statistics. For example, under the framework of (LD1) and (LD2$'$), the Berry–Esseen bound for graph dependency does not match the corresponding Wasserstein-distance bound (see Remarks \ref{remark-graph} and \ref{remark-graph-2}). This mismatch can be traced back to the fact that the sets $B_i$ in (LD2) are too large. In this context, the previously imposed condition (LD2$'$)  can be replaced by the more flexible dependence structure (LD2) introduced above.

\cite{BARBOUR1989125} proved a Wasserstein-${1}$ bound for decomposable random variables, whose conditions are somewhat weaker than (LD1) and (LD2). Following this line of research, \cite{Raic2004} derived  a multivariate CLT for decomposable random vectors with finite second moments. From a different perspective, \cite{TemcinasNandaReinert2024} extended the notion of decomposability to a multivariate setting and established an abstract multivariate CLT via Stein’s method. 
The dependence structures (LD1) and (LD2) were introduced in \cite{10.1214/19-EJP301}. In that work, a bound for the Wasserstein-2 distance was derived under (LD1), (LD2), and (LD3), where (LD3) is an additional condition extending these two assumptions. \cite{Wasserstein-p(2023)} developed the Wasserstein-$p$ distance under stronger conditions.  For Berry–Esseen-type theorem, \cite{Fang2016Multivariate} established a multivariate Berry--Esseen bounds for bounded decomposable random vectors with the best known rate, whereas \cite{Peter-Kolmogorov-bounds-2023} provided a Berry–Esseen bound for decomposable random variables using the Stein–Tikhomirov method.   \par
A common issue in statistics is that the variance 
$\operatorname{Var}(S_{n})$ is unknown, which makes statistical inference infeasible. To address this, we also investigate central limit theorems for  self-normalized  sums of locally dependent random variables.
In this paper, we develop a Berry--Esseen bound for self-normalized sum of locally dependent random variables under (LD1) and (LD2). Compared to \cite{zhang2021berryesseen},  the size of $A_{ij}$ is usually much smaller than those used in \cite{zhang2021berryesseen}. \par
Our main contributions can be summarized as follows.
First, we establish two Berry--Esseen bounds for sums of locally dependent random variables. 
These bounds are sharper than those of \cite{Peter-Kolmogorov-bounds-2023} in terms of moment conditions and improve upon those of \cite{ChenandShao} and \cite{zhang2021berryesseen} in terms of $\kappa$.
Second, to prove these bounds, we develop two generalized randomized concentration inequalities under (LD1) and (LD2) using a recursive method, based on the ideas of \cite{ChenandShao} and \cite{Chen-recursive}.
Third, by applying main results, we derive Berry--Esseen bounds for some interesting applications, such as graph dependency, distributed $U$-statistics, constrained $U$-statistics, and decorated injective homomorphism sums  with a sharper convergence rate and weaker conditions. 
\par
 \par The proof of our main results is based on Stein's method and  refined concentration inequality approach. The concentration inequality approach is a powerful technique used to obtain the optimal Berry--Esseen bounds. There are many concentration inequalities under different conditions, see \cite{chenandshao2001non-uniform,ChenandShao,chenandshao2007normal}, \cite{shanandzhou2016}, \cite{shaoandzhang2022berry} and so on.  In this paper, we establish a new class of concentration inequalities that extend the classical results. These inequalities are valid in more general settings and lead to optimal convergence rates.\par 
Throughout this paper, we use the following notations. For  two real numbers $a,b$, let $a\wedge b=\min\{a,b\}$ and $a\vee b=\max\{a,b\}.$ For two random variables, \(X \perp\!\!\!\perp Y\) indicates that \(X\) and \(Y\) are independent. For a function \(f\), we write \(\|f\|_\infty = \sup_{x} |f(x)|\).  For any random variable $X$ and $p\geq 1$, let $\|X\|_{p}=(\e|X|^{p})^{1/p}$ be the $L_{p}$ norm of $X.$ For a set $A$, $|A|$ denotes its
cardinality. Finally, unless otherwise specified, the capital letter $C$ stands for an absolute constant whose value may take different values in different places.\par
The rest of this paper is organized as follows. In Section \ref{section-main-result}, we give our main results, including  Berry--Esseen bounds  for non-self-normalized and self-normalized sums of locally dependent  random variables.  Applications are presented in Section \ref{section-applications}. Some preliminary lemmas and randomized concentration inequalities are given in Section \ref{section-lemma} and the proof of main results are postponed to Section \ref{section-proof-of-main-results}.
\section{Main results}\label{section-main-result}
For the sake of clarity, we first revisit our setting.
Let $[n]=\{1,2,\cdots,n\} $ be an index set and let $\left\{X_i: i \in [n] \right\}$ be a field of real-valued random variables satisfying $\e\{ X_{i}\}=0$ for any $i\in [n]$. We consider the following local dependence structure.
\begin{itemize}
    \item [(LD1)] For any $i\in[n]$, there exists $A_i\subset[n]$ such that
        $X_i$ is independent of $\{X_j: j\notin A_i\}$.
    \item [(LD2)] For any $i\in[n]$ and $j\in A_i$, there exists $A_{ij}\supset A_i$
        such that $\{X_i,X_j\}$ is independent of $\{X_k: k\notin A_{ij}\}$.
\end{itemize}
\subsection{Berry--Esseen bounds for  sums of locally dependent random variables}
Let $$
S:=S_{n}=\sum_{i \in[n]} X_{i}\quad \text{and}\quad W_{1}=S/\sigma,$$ where $\sigma^{2}:=\sigma_{n}^{2}=\text{Var}(S)$. For any $i\in [n]$, let $$N_{i}=\{k\in [n]:i\in A_{k}\}\quad \text{and}\quad D_{i}=\{(k,l): l\in A_{k}, \{i\}\cap A_{kl}\neq \varnothing\}.$$
Let $\kappa$ and $\tau$ be two positive constants such that 
\begin{align}\label{eq-definition-of-kappa-tau}
\kappa=\max\big\{ \sup_{i\in [n]} |N_{i}|,\ \sup_{i,j} |A_{ij}| \big\}\quad\text{and}\quad \tau=\sup_{i\in [n]}|D_{i}|.
\end{align}
\begin{theo}\label{thm-main1-1} Under \textup{(LD1)} and \textup{(LD2)}, we have
	\begin{align}\label{eq-thm-11}
		\sup _{z \in \mathbb{R}}|\mathbb{P}(W_{1} \leq z)-\Phi(z)|\leq \frac{C\kappa^{2}}{\sigma^{3}}\sum_{i\in [n]}\|X_{i}\|_{4}^{3}+\frac{C\kappa^{1/2}(\kappa+\tau^{1/2})}{\sigma^{2}}\Big(\sum_{i\in [n]}\|X_{i}\|_{4}^{4}\Big)^{1/2}.
	\end{align}
\end{theo}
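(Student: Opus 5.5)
We use Stein's method with the concentration inequality approach, following \cite{ChenandShao}, but with the neighbourhoods $A_{ij}$ of (LD2) replacing the sets $B_i$ of (LD2$'$). Normalize so that $\sigma=1$ and write $\xi_i=X_i/\sigma$, $W=W_1=\sum_i\xi_i$, $\eta_i=\sum_{j\in A_i}\xi_j$. For fixed $z$, let $f=f_z$ be the bounded solution of the Stein equation $f'(w)-wf(w)=1\{w\le z\}-\Phi(z)$; it satisfies $\|f\|_\infty\le 1$ and $\|f'\|_\infty\le 1$. By (LD1), $\e\xi_i f(W-\eta_i)=0$, so
\begin{align*}
\e Wf(W)=\sum_i\e\,\xi_i\{f(W)-f(W-\eta_i)\}=\e\int_{-\infty}^{\infty} f'(W+t)\,\hat K(t)\,dt,
\end{align*}
where $\hat K(t)=\sum_i\xi_i\{1(-\eta_i\le t\le 0)-1(0<t\le-\eta_i)\}$ and $\int\e\hat K(t)\,dt=\sum_i\e\xi_i\eta_i=1$. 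Substituting $f'(w)=wf(w)+1\{w\le z\}-\Phi(z)$ leaves two error terms. The first is the smooth part $\e\int(W+t)f(W+t)\hat K(t)\,dt$, bounded using $|wf(w)|\le1$ together with the moment bounds. The second is the indicator part
\[
\e\int\big(1\{W+t\le z\}-\Phi(z)\big)\big(\hat K(t)-\e\hat K(t)\big)\,dt,
\]
and the key step is to control the fluctuation of $\hat K$ around its mean.

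Take the indicator part first. Define $K(t)=\int\hat K$-type aggregates, namely $\sum_i\xi_i\eta_i-1=\sum_i\sum_{j\in A_i}(\xi_i\xi_j-\e\xi_i\xi_j)$. To bound its variance we use (LD2): for pairs $(i,j)$ and $(k,l)$ with $l\in A_k$, the summands $\xi_i\xi_j$ and $\xi_k\xi_l$ are independent unless $\{k,l\}\cap A_{ij}\neq\varnothing$, and this relation is symmetric up to counting. The number of dependent pairs is controlled by $|A_{ij}|\le\kappa$, by $|N_k|\le\kappa$, and by $|D_i|\le\tau$. Combining this counting with H\"older's inequality and $ab\le(a^2+b^2)/2$ should give
\[
\va\Big(\sum_i\xi_i\eta_i\Big)\le C\kappa(\kappa^2+\tau)\sum_i\e\xi_i^4.
\]
This yields the second term of the bound, $\kappa^{1/2}(\kappa+\tau^{1/2})(\sum\|X_i\|_4^4)^{1/2}/\sigma^2$. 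The $t$-dependent indicators are handled in the same way, conditioning on the $t$-window as in \cite{ChenandShao}.

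The remaining terms have the form $\e\,\xi_i\,1\{z-\eta_i\vee0\le W\le z-\eta_i\wedge0\}$, which need a randomized concentration inequality: a bound on $\p(a\le W\le b)$ for random endpoints $a,b$ that depend on the $\xi_j$ with $j\in A_i$. Here we would invoke the generalized randomized concentration inequality developed in Section~\ref{section-lemma} under (LD1)--(LD2). Its proof goes through the recursive method of \cite{Chen-recursive}: one first bounds $\p(a\le W\le b)\le C\e|b-a|+C\delta$, with $\delta$ the Berry--Esseen distance itself, and then solves the resulting inequality for $\delta$. The key point is that removing the neighbourhood $A_{ij}$ rather than $B_i$ keeps the remainder independent while costing only a factor $\kappa$. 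This yields contributions of size $\kappa^2\sum_i\e|\xi_i|\,|\eta_i|^2\le\kappa^2\sum_i\|\xi_i\|_4^3$, using H\"older and $|A_i|\le\kappa$, which is the first term of the bound.

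The main obstacle is this randomized concentration inequality. Because $\{X_i,X_j\}$ is independent only of variables outside $A_{ij}$, not of a whole block, the usual leave-one-neighbourhood-out argument has to be run with double indices. Keeping the dependence to $\kappa^2$ and $\tau$ there, and not to $\kappa^3$, is what makes the bound sharp.
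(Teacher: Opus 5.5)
Your skeleton matches the paper's: the identity $\e Wf(W)=\e\int f'(W+t)\hat K(t)\,dt$, the variance bound $\va(\sum_i\xi_i\eta_i)\le C\kappa(\kappa^2+\tau)\sum_i\e\xi_i^4$ from counting dependent pairs through (LD2) (this is Lemma~\ref{lem-XiYi-moment}), and a randomized concentration inequality for the indicator terms. But the step you yourself call the main obstacle is left open, and the route you sketch for it does not work under these hypotheses.

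\textbf{The concentration step.} A bound $\p(a\le W\le b)\le C\e|b-a|+C\delta$ with random endpoints and a random weight is proved by conditioning on the variables that determine them. One then uses their independence from a large remainder of $W$, which is exactly (LD2$'$). Here the hypotheses only make single variables ($X_k$ from $X_{A_k^c}$) and pairs ($\{X_i,X_j\}$ from $X_{A_{ij}^c}$) independent of anything. Removing $A_{ij}$ frees the remainder from the pair, but not from $\eta_i=\sum_{j\in A_i}\xi_j$, which sets the window.

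\textbf{The recursion on $\delta$.} Even granting such a bound, your unsmoothed $f_z$ makes the indicator terms $\sum_i\e|\xi_i\eta_i|\mathbf{1}(z-|\eta_i|\le W\le z+|\eta_i|)$. The coefficient of $\delta$ is then a constant times $\sum_i\e|\xi_i\eta_i|\ge\sum_i\e\xi_i\eta_i=1$. That is not small, so the inequality cannot be solved for $\delta$. Similarly, the fixed error terms of any concentration inequality would be multiplied by $\sum_i\|\xi_i\eta_i\|_{4/3}$, which need not be bounded.

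\textbf{A smaller slip.} $\e\int(W+t)f(W+t)\hat K(t)\,dt$ is not an error term, and $|wf(w)|\le 1$ bounds it only by $O(1)$. The actual error is $\e\int[Wf(W)-(W+t)f(W+t)]\hat K(t)\,dt$, controlled by $|(w+t)f(w+t)-wf(w)|\le(|w|+1)|t|$, together with $\e f'(W)(1-\hat K)$ where $\hat K=\int\hat K(t)\,dt$.

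\textbf{How the paper avoids these problems.} In Theorem~\ref{thm-main1} it smooths with $h_{z,\varepsilon}$, taking $\varepsilon$ of the size of the target bound. The concentration term then becomes $\varepsilon^{-1}\sigma^{-3}\sum_i\e|X_iY_i^2|\mathbf{1}(\cdots)$, which is third order, and the $\varepsilon$ in the concentration bound cancels the $\varepsilon^{-1}$. Young's inequality $|X_iX_j^2|\le\frac13|X_i|^3+\frac23|X_j|^3$ reduces the weight to a single $|X_i|^3$. Only $N_i$ (with $|N_i|\le\kappa$) then has to be removed, and this is what keeps the factor at $\kappa^2$.

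The concentration inequality itself, Proposition~\ref{prop:1}, uses no conditioning. The weight $\xi_A=h(X_A)$ stays inside a Stein identity for $S_A=\sum_{k\in N_A^c}X_k$. That identity only needs $X_k$ to be independent of $(\xi_A,\,S_A-Y_{k,A},\,\eta_B^{(k)},\,\zeta_B^{(k)})$ for $k\in N_A^c$, which follows from (LD1). The random endpoints are paid for in $H_4$, and the fluctuation of $\hat M$ is handled by Lemma~\ref{lem-XiYi-moment}.

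The recursion is on the constant $K$, the supremum of the normalized concentration ratio over all $(A,B,a,b,c)$, not on the Berry--Esseen distance. The term $H_2$ produces concentration probabilities for $S_{A\cup\{k\}}$ with an enlarged $B$. These are bounded by $K$ times small quantities, giving $0.98K\le 59+0.6K$.
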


\subsection{Berry--Esseen bounds for self-normalized  sums of locally dependent random variables} In this subsection, we present  results for  self-normalized  sums of locally dependent random variables.
Let $$
 V=\sqrt{\Big(\sum_{i\in [n]} (X_{i}Y_{i}-\bar{X}\bar{Y})\Big)_{+}}\quad\text{and}\quad W_{2}=S/V,$$
where
$Y_{i}=\sum_{k\in A_{i}}X_{k}$, $\bar{X}=\sum_{i\in [n]}X_{i}/n, \bar{Y}=\sum_{i\in [n]}Y_{i}/n$ and $(x)_{+}=\max(x,0)$.  We remark that $\e X_{i}=0$ is not a necessary assumption, since one can directly replace   $ X_{i}$ by $X_{i}-\e X_{i}$. For $W_{2}$, we have the following theorem.
\begin{theo}\label{thm-main2}  Under \textup{(LD1)} and \textup{(LD2)}, we have
\begin{align}\label{eq-thm-03}
&\sup _{z \in \mathbb{R}}|\mathbb{P}(W_{2}\leq z)-\Phi(z)|\leq C \lambda\Big(\frac{\kappa^{2}}{\sigma^{3}}\sum_{i \in[n]}\|X_{i}\|_{4}^{3}+\frac{\kappa^{1 / 2}(\kappa+\tau^{1/2})}{\sigma^{2}}\big(\sum_{i \in[n]} \|X_i\|_{4}^4\big)^{1 / 2}\Big),
\end{align}
where
$ \lambda= \kappa \sum_{i \in[n]}\| X_i\|_{2}^{ 2}/\sigma^{2}.$
 \end{theo}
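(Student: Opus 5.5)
Write $\beta$ for the bracket on the right of \eqref{eq-thm-03}, so the claimed bound is $C\lambda\beta$. Since $\lambda\ge 1$ up to constants (indeed $\sigma^2=\sum_i \e X_iY_i\le \sum_i\|X_i\|_2\|Y_i\|_2$, and Cauchy--Schwarz with $|N_k|\le\kappa$ gives $\sigma^2\le \kappa\sum_i\|X_i\|_2^2$), we may assume $\lambda\beta\le c_0$ for a small absolute constant $c_0$; otherwise the bound is trivial.

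The plan is to reduce to Theorem \ref{thm-main1-1} via the standard comparison between $S/V$ and $S/\sigma$. For $z\ge 0$ (the case $z<0$ is symmetric, replacing $X_i$ by $-X_i$) we have
\begin{align*}
\{W_2\le z\}\,\triangle\,\{W_1\le z\}\subset\{|W_1-z|\le z|V/\sigma-1|\}.
\end{align*}
So it suffices to control $\p(|W_1-z|\le z\Delta)$ with $\Delta=|V^2/\sigma^2-1|$, using $|V/\sigma-1|\le|V^2/\sigma^2-1|$. Because $\Delta$ is random and correlated with $W_1$, we do not take a supremum. Instead we use the randomized concentration inequality under (LD1)--(LD2) developed in Section \ref{section-lemma}: it bounds $\p(\Delta_1\le W_1\le\Delta_2)$ by $C\e|\Delta_2-\Delta_1|$ plus error terms of order $\beta$, where the local parts are handled by their neighbourhoods $A_{ij}$. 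Beforehand we truncate at large $z$: for $z\ge 2$, say, we bound $\p(W_2>z)$ and $1-\Phi(z)$ separately, using Chebyshev on $V^2$ and the tail of $W_1$.

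Next we write $V^2-\sigma^2=\sum_i(X_iY_i-\e X_iY_i)-n\bar X\bar Y$. This splits $\Delta$ into a locally dependent centered sum $\Delta^{(1)}=\sigma^{-2}\sum_i(X_iY_i-\e X_iY_i)$ and a global term $\Delta^{(2)}=n\bar X\bar Y/\sigma^2$.

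For $\Delta^{(1)}$ we follow the recursive concentration argument. Each summand $\xi_i=X_iY_i-\e X_iY_i$ depends only on $\{X_k:k\in A_i\}$. Using (LD2), for $j\in A_i$ the pair $\{X_i,X_j\}$ is independent of $\{X_k: k\notin A_{ij}\}$, so $\e\Delta^{(1)2}$ only involves pairs whose $A_{ij}$ neighbourhoods overlap. This gives $\e|\Delta^{(1)}|^2\le C\sigma^{-4}\kappa(\kappa^2+\tau)\sum_i\|X_i\|_4^4\cdot(\text{something})$. Hölder with the fourth moments, together with $\|Y_i\|_4\le\sum_{k\in A_i}\|X_k\|_4$, should produce a factor $\lambda$ times the second term of $\beta$. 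The first term of $\beta$ picks up the same $\lambda$ factor from the third-moment cross terms.

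The global term is easy: $\e|n\bar X\bar Y|\le n^{-1}\|S\|_2\|\sum_iY_i\|_2$, and $\|\sum_i Y_i\|_2\le \kappa\sigma$ or a similar crude estimate makes it $O(\kappa/n)$. This is dominated by $\lambda\beta$ since $\beta\gtrsim \kappa^2\sum\|X_i\|_4^3/\sigma^3\ge c\,n^{-1/2}$-type quantities. Finally, the positive part in $V$ only matters on $\{V^2\le \sigma^2/2\}$, which has probability at most $4\e\Delta^2$ and is absorbed.

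The main obstacle will be the randomized concentration step for $\p(|W_1-z|\le z\Delta^{(1)})$ with the weight $z$. One must split $\Delta^{(1)}=\Delta^{(1)}_{-i}+(\text{local part})$, where $\Delta^{(1)}_{-i}$ removes the terms indexed by $D_i$, so that it is nearly independent of $X_i$. Here $\tau=\sup|D_i|$ enters. The difficulty is to show that this local replacement costs only $\lambda\beta$, rather than a worse power of $\kappa$. I would first try applying the generalized concentration inequality of Section \ref{section-lemma} directly with $\Delta_1=z-z\Delta$, $\Delta_2=z+z\Delta$, and $z$ bounded after truncation. The factor $\lambda$ then arises from bounding $\e|X_iY_i|\le\kappa\|X_i\|_2^2$-type sums relative to $\sigma^2$.
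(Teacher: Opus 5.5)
\textbf{The gap: the concentration step your reduction rests on is not available.} After comparing with $W_1$ you must bound $\p(|W_1-z|\le z\Delta)$, where $\Delta=|V^2/\sigma^2-1|$ is a quadratic form in all of $X_1,\dots,X_n$. Section \ref{section-lemma} does not supply an inequality of the form $\p(\Delta_1\le W_1\le\Delta_2)\le C\e|\Delta_2-\Delta_1|+O(\beta)$. No such inequality can hold without leave-one-out terms: take $\Delta_1=\Delta_2=W_1$.

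Proposition \ref{prop:1} only handles intervals with endpoints $a-c\sigma^{-1}\sum_{m\in B}|X_m|$ and $b+c\sigma^{-1}\sum_{m\in B}|X_m|$ for a fixed index set $B$, and its bound contains $\delta_2=c\sigma^{-1}\sum_{m\in B}\|X_m\|_4$. Dominating $z\Delta$ this way forces $B=[n]$, and then $\delta_2\asymp\sqrt n$ in the i.i.d.\ case. So "applying the generalized concentration inequality directly" does not work.

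What you actually need is a new Chen--Shao type randomized concentration inequality under (LD1)--(LD2). It would contain terms like $\sigma^{-1}\sum_i\e|X_i(\Delta-\Delta^{(i)})|$ and would have to be proved with the refined $\kappa^{1/2}(\kappa+\tau^{1/2})$ dependence. That is precisely the step you call the main obstacle and leave open, so the proposal is a strategy rather than a proof. The factor $\lambda$ is likewise asserted ("should produce") rather than derived.

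\textbf{Two further steps fail as written.}

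First, truncating at $z\ge 2$ and bounding $\p(W_2>z)$ and $1-\Phi(z)$ separately cannot give $O(\lambda\beta)$, since $1-\Phi(2)>0.02$. The usual fix is to note that on $\{\Delta\le 1/2\}$ the event $\{|W_1-z|\le z\Delta\}$ forces $|W_1-z|\le 2|W_1|\Delta$. That amounts to treating $W_2=W_1+W_1(\sigma/V-1)$ as a nonlinear statistic, which brings in $\e|W_1^2\Delta|\le\|W_1\|_4^2\|\Delta\|_2$. This is where $\e S^4\le 13\lambda\sigma^4$ from Lemma \ref{lem-fourth-moment-for-S} would produce your $\lambda$, but it again requires the missing randomized inequality.

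Second, $\|\sum_iY_i\|_2\le\kappa\sigma$ is false in general. Since $\sum_iY_i=\sum_k|N_k|X_k$, unequal weights can destroy cancellations present in $S$. The correct bound $\kappa\lambda^{1/2}\sigma$ still suffices for the $\bar X\bar Y$ term.

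\textbf{How the paper avoids this.} It never compares with $W_1$. It runs Stein's method directly on $\bar W_2=S/\bar V$ with $\bar V=\psi(\sum_iX_iY_i)\in[\sigma/2,\sqrt2\,\sigma]$. The global fluctuation of $V^2$ then enters only through $\e|1-\bar V^{-2}\sum_iX_iY_i|$ (Lemma \ref{lem-XiYi-moment}) and through Lemma \ref{lem-R4}. In the latter, the replacement $\bar V\to\bar V^{(i)}$ happens inside a smooth test function, not an indicator. The only concentration needed is for $S_i/\bar V_i$ on intervals whose random endpoints are local sums $\sum_{m\in A_i\cup N_i}|X_m|$ plus the special term $|S_i|Q_i/\sigma$ with $Q_i=\min\{1,T_i\}$. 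That is exactly the form covered by the recursively proved Proposition \ref{prop:2}. The $\bar X\bar Y$ correction is removed at the end via \eqref{eq-m-05}--\eqref{eq-m-07}.
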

\begin{remark}
  We make some remarks on $\lambda$. In many examples, such as  $U$-statistics, we have
\[
\sigma^2\asymp\kappa \sum_{i \in [n]} \|X_i\|_2^2,
\]
under which \(\lambda\) is of order \(O(1)\).
Specifically, for sums of independent random variables,  $W_{2}$ is  the well-known t-statistic. In this case, $\kappa=\lambda=1$, then the right hand of  (\ref{eq-thm-03}) becomes
  \begin{align*}
     C \sum_{i \in[n]}\|X_{i}\|_{4}^{3}/\sigma^{3}+C\Big(\sum_{i \in[n]} \|X_i\|_{4}^{4}/\sigma^{4}\Big)^{1/2}.
  \end{align*}
  The above bound can  achieve the optimal rate of convergence, but the cost is the existence of finite fourth moment.
\end{remark}
\section{Applications}\label{section-applications}
\subsection{Graph dependency} Let $\{X_i : i \in \mathcal{V}\}$ be a collection of random variables. A graph $\mathcal{G} = (\mathcal{V}, \mathcal{E})$ is called a {\it dependency graph} if, for any two disjoint subsets $\Gamma_1, \Gamma_2 \subseteq \mathcal{V}$ such that no edge in $\mathcal{E}$ connects a vertex in $\Gamma_1$ with a vertex in $\Gamma_2$, the collections of random variables $\{X_i : i \in \Gamma_1\}$ and $\{X_i : i \in \Gamma_2\}$ are independent. This notion and the first quantitative CLT bounds in the Kolmogorov metric were developed by \cite{BaldiandRinott1989}. Let $A_{i}=\{j\in\mathcal{V}: \text{there is an edge connecting}\ i\ \text{and}\ j\}$.  Applying Theorems \ref{thm-main1-1} and \ref{thm-main2}, we have the following theorem.
\begin{theo}\label{thm-graph-dependency}
 For a field of random variables ${X_{i}, i\in \mathcal{V}}$ indexed by the vertices of a dependency graph $\mathcal{G}=(\mathcal{V},\mathcal{E})$ with $\e\{X_{i}\}=0$, let $S=\sum_{i\in \mathcal{V}}X_{i}$, $\sigma^{2}=\text{Var}(S)$, $Y_{i}=\sum_{j\in A_{i}}X_{i}$ and $V=\sqrt{\big(\sum_{i\in [n]} (X_{i}Y_{i}-\bar{X}\bar{Y})\big)_{+}}$. Put $$W_{1}=S/\sigma,\quad W_{2}=S/V,$$   then
 \begin{align}\label{eq-app2-01}
 \sup_{z\in\R}|\p(W_{1}\leq z)-\Phi(z)|\leq C d^{2}\sum_{i\in \mathcal{V}}\|X_{i}\|_{4}^{3}/\sigma^{3}+Cd^{3/2}\big(\sum_{i\in \mathcal{V}} \|X_{i}\|_{4}^{4}/\sigma^{2} \big)^{1/2}
 \end{align}
 and
 \begin{align}\label{eq-app2-02}
  \sup_{z\in\R}|\p(W_{2}\leq z)-\Phi(z)|
  \leq C\lambda_{1}\Big( d^{2}\sum_{i\in \mathcal{V}}\|X_{i}\|_{4}^{3}/\sigma^{3}+Cd^{3/2}\big(\sum_{i\in \mathcal{V}} \|X_{i}\|_{4}^{4}/\sigma^{4} \big)^{1/2}\Big),
 \end{align}
 where $\lambda_{1}=d\sum_{i\in\mathcal{V}}\e|X_{i}|^{2}/\sigma^{2}$, $C$ is an absolute constant and  $d$ is the maximal degree of $\mathcal{G}$.
\end{theo}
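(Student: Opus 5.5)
This is a direct corollary of Theorems \ref{thm-main1-1} and \ref{thm-main2}. The plan is to exhibit neighborhoods $A_i$ and $A_{ij}$ satisfying (LD1) and (LD2), and then to bound $\kappa$ and $\tau$ in terms of the maximal degree $d$. We take the closed neighborhoods: $A_i=\{i\}\cup\{j: j\sim i\}$, so that $Y_i=\sum_{k\in A_i}X_k$ is the intended local sum, and, for $j\in A_i$, $A_{ij}=A_i\cup A_j$.

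First we check the dependency conditions. For (LD1), no edge joins $\{i\}$ to $\mathcal V\setminus A_i$, since every neighbour of $i$ lies in $A_i$. Taking $\Gamma_1=\{i\}$ and $\Gamma_2=\mathcal V\setminus A_i$ in the dependency-graph definition gives that $X_i$ is independent of $\{X_k:k\notin A_i\}$. For (LD2), take $\Gamma_1=\{i,j\}$ and $\Gamma_2=\mathcal V\setminus(A_i\cup A_j)$. Any neighbour of $i$ or $j$ lies in $A_i\cup A_j$, so no edge joins $\Gamma_1$ and $\Gamma_2$, and $\{X_i,X_j\}$ is independent of $\{X_k: k\notin A_{ij}\}$. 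Clearly $A_{ij}\supset A_i$.

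Next we count. $|A_i|\le d+1$ and $|A_{ij}|\le 2(d+1)$. Since adjacency is symmetric, $N_i=\{k: i\in A_k\}=A_i$, so $|N_i|\le d+1$, and hence $\kappa\le 2d+2\le 4d$ when $d\ge1$. The case $d=0$ is the independent case, where one may instead take $A_i=A_{ij}=\{i\}$ and get $\kappa=\tau=1$; we absorb it by reading $d$ as $d\vee1$. For $\tau$, note that $(k,l)\in D_i$ requires $i\in A_k\cup A_l$ with $l\in A_k$. Such pairs arise in two ways. Either $k\in A_i$ and $l\in A_k$, giving at most $(d+1)^2$ pairs. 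Or $l\in A_i$ and $k\in A_l$, also giving at most $(d+1)^2$ pairs. Hence $\tau\le 2(d+1)^2\le 8d^2$ and $\tau^{1/2}\le Cd$.

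Finally we substitute. We have $\kappa^2\le Cd^2$ and $\kappa^{1/2}(\kappa+\tau^{1/2})\le Cd^{1/2}\cdot d=Cd^{3/2}$. Plugging these into \eqref{eq-thm-11} gives \eqref{eq-app2-01}, where the $\sigma$-normalisation should read $\sigma^{2}$ outside the square root, i.e. $\sigma^{4}$ inside, matching \eqref{eq-app2-02}. For \eqref{eq-app2-02}, $\lambda=\kappa\sum\|X_i\|_2^2/\sigma^2\le C d\sum\e X_i^2/\sigma^2=C\lambda_1$, and Theorem \ref{thm-main2} yields the bound.

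Nothing here is a genuine obstacle. The only care points are two. The first is the correct choice of $A_{ij}$: it must be the union of closed neighborhoods, so that (LD2) holds with size $O(d)$ rather than the $O(d^2)$ that (LD2$'$) would need. The second is the double count for $\tau$, which gives $\tau=O(d^2)$ and is the source of the improved exponent $d^{3/2}$.
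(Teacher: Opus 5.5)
Your proposal is correct and follows the paper's own proof: take $A_{ij}=A_i\cup A_j$, note $N_i=A_i$, bound $\kappa=O(d)$ and $\tau=O(d^2)$ by the same double count, and substitute into Theorems \ref{thm-main1-1} and \ref{thm-main2}. Your extra care points are genuine repairs of slips in the statement and in the paper's brief argument, which defines $A_i$ as the open neighbourhood and writes $\kappa\le 2d$:
\begin{itemize}
\item closed neighbourhoods are needed both for (LD1) and for $Y_i$ to contain $X_i$;
\item the $d\vee 1$ convention handles the edgeless case $d=0$;
\item the normalisation inside the square root in \eqref{eq-app2-01} should be $\sigma^{4}$, not $\sigma^{2}$.
\end{itemize}
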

\begin{remark}\label{remark-graph}
  For Wasserstein distance, \citet[Theorem 3.6]{ross2011} proved that
  \begin{align}\label{eq-app2-03}
   \mathcal{W}_{1}(\mathcal{L}(W_{1}), N(0,1)) \leq C d^{2}\sum_{i\in \mathcal{V}}\e|X_{i}|^{3}+Cd^{3/2}\big(\sum_{i\in \mathcal{V}} \e|X_{i}|^{4} \big)^{1/2}.
  \end{align}
 Our bounds match the Wasserstein distance proved by \cite{ross2011}. 
  \end{remark}

  \begin{remark}\label{remark-graph-2}
For Berry--Esseen bounds, \cite{ChenandShao} showed that for any $p\in (2,3]$,
  \begin{align}\label{eq-app2-04}
     \sup_{z\in\R}|\p(W_{1}\leq z)-\Phi(z)|\leq Cd^{5(p-1)}\sum_{i\in \mathcal{V}} \e|X_{i}|^{p}/\sigma^{p}.
  \end{align}
Recently, \cite{JanischLehericy2024} established Berry--Esseen bounds with improved dependence on $d$,  under  a finite $\delta$-moment with $\delta\in(2,\infty]$.
When $\delta=\infty$, the bound is essentially optimal, namely
  \begin{align}\label{eq-app2-04.1}
\sup_{z\in\R}|\p(W_{1}\leq z)-\Phi(z)|
\le
\max\!\Big\{\,68.5(d+1)^2\,\sum_{i\in\mathcal{V}}\frac{\e|X_{i}|^{3}}{\sigma^3} ,\; 22.88\,\frac{L(d+1)}{\sigma}\Big\},
\end{align}
where $L=\max_{i\in\mathcal{V}}\|X\|_{\infty}.$
For the self-normalized case, \cite{zhang2021berryesseen} proved that
  \begin{align} \label{eq-app2-05}
  \begin{aligned}   
  	&\sup_{z\in \R}|\p(W_{2}\leq z)-\Phi(z)|\\
    &\quad\leq Cd^{9}\big[1+d^{6}n^{1/3}\big(\sum_{i \in \mathcal{V}}\e|X_{i}|^{3}/\sigma^{3}\big)^{2/3}\big]\cdot\big(n^{-1/2}+\sum_{i \in \mathcal{V}}\e|X_{i}|^{3}/\sigma^{3}\big).
  \end{aligned} 
  \end{align}
Compared with (\ref{eq-app2-04}) and (\ref{eq-app2-05}), our results are sharper in terms of  $d$, and compared with \eqref{eq-app2-04.1}, our moment condition is weaker.
  \end{remark}
\subsection{\texorpdfstring{Distributed $U$-statistics}{Distributed U-statistics}}
Let $X_1, \dots, X_N$ be independent random variables with an unknown distribution $P$, and let $h: \R^m \to \R$ be a measurable symmetric function such that
$
\e[h(X_1, \dots, X_m)] = \theta.
$
An unbiased estimator of $\theta$ is given by
\begin{align}\label{eq-ap-00}
    U_{N} = \binom{N}{m}^{-1} \sum_{1 \le i_1 < \cdots < i_m \le N} h(X_{i_1}, \ldots, X_{i_m}).
\end{align}
The statistic defined in (\ref{eq-ap-00}) is the $U$-statistic,  introduced by \cite{Hoeffding19481}, where $h$ is the kernel function and $m$ is the degree of the $U$-statistic. Many classical statistics are $U$-statistics, including the sample variance,  Kendall's $\tau$ for measuring correlation and Gini's mean difference.\par
However, computing the $U$-statistic in (\ref{eq-ap-00}) requires a computational cost that grows like $N^m$, 
which  become prohibitively expensive for large $N$.  To reduce the computational cost, and inspired by the ``divide-and-conquer" idea, \cite{LIN201016} proposed computing a distributed $U$-statistic as a replacement for the classical $U$-statistic.
The {\it distributed $U$-statistic} is defined as follows. 
First, the random sample is partitioned into $k$ subsets. 
The observations in the $i$-th subset are denoted by $\mathbb{S}_{i}:=\{X_{i1}, \ldots, X_{in_i}\}$, 
and $U_{N,i}$ denotes the $U$-statistic computed from this subset.
The distributed $U$-statistic is then defined as  the following weighted average of $U_{N,1},\cdots,U_{N,k}$:
\begin{align}\label{eq-ap-01}
    U_{d} := \frac{1}{N} \sum_{i=1}^{k} n_i U_{N,i}
    = \sum_{i=1}^{k} \frac{n_i}{N} \cdot \binom{n_i}{m}^{-1} \sum_{1 \le j_1 < \cdots < j_m \le n_i} h(X_{ij_1}, \dots, X_{ij_m}).
\end{align}
The time complexity of
computing the distributed $U$-statistics in (\ref{eq-ap-01}) is approximately $k(N/k)^{m}$. Compared with the $O(N^m)$ complexity of the classical $U$-statistic, the computational cost is significantly reduced. The remaining question is whether the distributed $U$-statistic is statistically equivalent to the classical $U$-statistic.
For the non-degenerate case, i.e., $\sigma_1^2 = \mathbb{E}[g^2(X_1)] > 0$, 
where $g(x) = \mathbb{E}[(h(X_1, \ldots, X_m) - \theta) \mid X_1 = x]$, 
\cite{LIN201016} proved that if $\mathbb{E}[h^2(X_1, \ldots, X_m)] < \infty$ and 
 $k = o(N)$, then
\[
W := \frac{\sqrt{N}}{m \sigma_1} (U_d - \theta) \xrightarrow{d} N(0,1) \quad \text{as } N \to \infty.
\]
\cite{chensongxi2021} studied generalized distributed symmetric statistics. 
Specifically, they considered symmetric statistics of the form
\[
T_N = \theta + N^{-1} \sum_{i=1}^N \alpha(X_i)
      + N^{-2} \sum_{1 \le i < j \le N} \beta(X_i, X_j) + R_N,
\]
where $\theta$ is the parameter of interest, 
$\alpha(x)$ and $\beta(x, y)$ are known functions, 
and $R_N$ is a remainder term. By Hoeffding’s decomposition, $U$-statistics is a  special case of $T_{N}.$ To demonstrate the feasibility of the distributed approach, they study some properties of distributed $U$-statistics with degree $2$ in the appendix. Specifically, suppose that $\e |g(X_{1})|^{3}<\infty$, $\e\left\{h\left(X_1, X_2\right)\right\}^2< \infty$,   $c_1 \leq \inf _{k_1, k_2} n_{k_1} / n_{k_2} \leq \sup _{k_1, k_2} n_{k_1} / n_{k_2} \leq c_2$ for some positive constant $c_{1}$ and $c_{2}$, $k=O(N^{a})$ for some positive constant  $a<1 / 2$, they showed that as $N \rightarrow \infty$,
$$
\begin{aligned}
& \sup _{x \in \R}\big|\p(W \leq x)-\p(N^{1 / 2}(U_N-\theta)/(2\sigma_{1}) \leq x)\big|= o(N^{-1 / 2}).
\end{aligned}
$$

 Applying Theorem \ref{thm-main1}, we obtain a Berry--Esseen bound for $W.$
\begin{theo}\label{thm-distributed-U-statistics} 
If $\e h^{4}(X_{1},\cdots, X_{m})<\infty$, $\sigma_{1}>0$ and  $n_{i}\geq m$, $1\leq i\leq k$, then 
	\begin{align}\label{eq-ap-02}
	\sup_{z\in\R}\Big|\p\Big(\frac{W}{\sqrt{\mathrm{Var}(W)}}\leq z\Big)-\Phi(z)\Big|\leq \frac{Cm}{\sqrt{N}}\cdot\frac{\|h\|_{4}^{3}}{\sigma_{1}^{3}},
	\end{align}
	where $\|h\|_{p}=\big(\e|h(X_{1},\cdots,X_{m})|^{p}\big)^{1/p}$.
\end{theo}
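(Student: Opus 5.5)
Our plan is to write the distributed $U$-statistic as a sum of locally dependent summands and apply Theorem \ref{thm-main1-1}. Put $\theta=0$ without loss of generality. Index the summands by pairs $\alpha=(i,\{j_1<\dots<j_m\})$, where $i\in[k]$ and the $j$'s are indices within block $\mathbb{S}_i$. Set
\[
\xi_\alpha=\frac{n_i}{N}\binom{n_i}{m}^{-1}h(X_{ij_1},\dots,X_{ij_m}),
\]
so that $U_d=\sum_\alpha \xi_\alpha$. The neighborhoods are the natural ones. Let $A_\alpha$ consist of all $\beta$ in the same block sharing at least one index with $\alpha$. Let $A_{\alpha\beta}$ consist of all $\gamma$ in the same block sharing an index with $\alpha\cup\beta$. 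Since the $X$'s are independent, (LD1) and (LD2) hold.

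The naive application of Theorem \ref{thm-main1-1} is too crude, because $\kappa$ is of order $n_i^{m-1}$ and so is huge. I would therefore first use the Hoeffding decomposition within each block:
\[
U_{N,i}=\frac{m}{n_i}\sum_{j} g(X_{ij})+R_i,
\]
where $R_i$ collects the degenerate terms. This gives
\[
W=\frac{\sqrt N}{m\sigma_1}U_d=\frac{1}{\sqrt N\sigma_1}\sum_{i,j}g(X_{ij})+\frac{\sqrt N}{m\sigma_1}\sum_i\frac{n_i}{N}R_i.
\]
The leading term is a sum of $N$ i.i.d.\ variables. For a Kolmogorov bound one still needs to handle the remainder, which is not small in probability at the rate $N^{-1/2}$ unless one uses a concentration argument.

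A cleaner route is to apply Theorem \ref{thm-main1-1} directly, but with a reindexing that keeps $\kappa$ bounded by $Cm$. The reindexing takes the original observations as the index set, $X_{ij}\mapsto$ summand
\[
Z_{ij}=\frac{1}{m}\sum_{\alpha\ni (i,j)}\xi_\alpha ,
\]
so that $\sum_{i,j}Z_{ij}=U_d$. However, $Z_{ij}$ depends on the whole block, so local dependence fails.

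Hence I would combine the two ideas. Use the linear part $T=\sum_{i,j} g(X_{ij})/(\sqrt N\sigma_1)$, which is handled by the classical Berry--Esseen bound, giving rate $\E|g|^3/(\sqrt N\sigma_1^3)\le \|h\|_4^3/(\sqrt N\sigma_1^3)$. Then control the remainder $\Delta=W-T$ via the standard perturbation inequality
\[
\sup_z|\p(T+\Delta\le z)-\Phi(z)|\le \sup_z|\p(T\le z)-\Phi(z)|+\p(|\Delta|>\epsilon)+C\epsilon .
\]
Alternatively, for a sharper result, use a randomized concentration inequality of the type developed in Section \ref{section-lemma}, which bounds $\E|T\Delta|$-type terms.

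The variance computation gives $\E R_i^2\le C m^2\|h\|_2^2/n_i^2$. Since the blocks are independent,
\[
\E\Delta^2\le \frac{N}{m^2\sigma_1^2}\sum_i\frac{n_i^2}{N^2}\cdot\frac{Cm^2\|h\|_2^2}{n_i^2}=\frac{Ck\|h\|_2^2}{N\sigma_1^2}.
\]
This is small but gives $k/N$, not $N^{-1/2}$, when used through Chebyshev. So the main obstacle is obtaining the $m\|h\|_4^3/(\sqrt N\sigma_1^3)$ rate uniformly in $k$. I expect to need the Chen--Shao style bound
\[
\sup_z|\p(T+\Delta\le z)-\Phi(z)|\le \sup_z|\p(T\le z)-\Phi(z)| + C\E|T\Delta|+C\sum_{i,j}\E|\xi_{ij}(\Delta-\Delta^{(ij)})|,
\]
where $\Delta^{(ij)}$ removes the dependence on $X_{ij}$. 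Each $\Delta-\Delta^{(ij)}$ only involves block $i$, and a computation gives $\|\Delta-\Delta^{(ij)}\|_2\le Cm\|h\|_2/(\sqrt N\sigma_1 \sqrt{n_i})$. Summing gives the order $m\|h\|_4^2/(\sqrt N\sigma_1^2)$, which is dominated by the claimed bound since $\sigma_1\le\|h\|_2$. Finally I would normalize by $\sqrt{\va(W)}$, noting that $\va(W)=1+O(k/N)$, and absorb the discrepancy. The $m$ factor arises from the third moment of $g$ and the Hoeffding projection bounds.
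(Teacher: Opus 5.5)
\textbf{Gap: the route through the Hoeffding linear part cannot give a bound uniform in $k$.} Uniformity in $k$ is the whole content of the statement; the normalization by $\sqrt{\va(W)}$ is there precisely so that no restriction on $k$ is needed.

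Your scheme controls $\sup_z|\p(T+\Delta\le z)-\Phi(z)|=\sup_z|\p(W\le z)-\Phi(z)|$ and then rescales. By orthogonality, $\va(W)=1+\va(\Delta)$ with $\va(\Delta)\asymp k/N$ in general, so this route can never beat an error of order $k/N$. For instance, take $n_i=2m$ for all $i$ (so $k=N/(2m)$) and a kernel with nonvanishing second-order Hoeffding component. Then $\va(\Delta)$ is of constant order, and $T$ is simply not close in law to $W/\sqrt{\va(W)}$.

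Your individual error terms are even larger. First, $\E|T\Delta|$ is of the order of $\|\Delta\|_2\asymp\sqrt{k/N}$. Second, your own estimate $\|\Delta-\Delta^{(ij)}\|_2\le Cm\|h\|_2/(\sqrt N\sigma_1\sqrt{n_i})$ gives $\sum_{i,j}\|\xi_{ij}\|_2\|\Delta-\Delta^{(ij)}\|_2\le \frac{Cm\|h\|_2}{N\sigma_1}\sum_{i=1}^k\sqrt{n_i}$. For equal block sizes this equals $Cm\|h\|_2\sqrt{k/N}/\sigma_1$; your final summation step drops a factor $\sqrt k$.

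So the proposal delivers the claimed rate only for bounded $k$. This is exactly the block-by-block error accumulation that the paper's remark attributes to the approaches of Lin--Xi and Chen--Peng.

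\textbf{The missing idea is the route you discarded as ``too crude''.} A large $\kappa$ is harmless because the summands are correspondingly small. With your neighborhoods, $N_\alpha=A_\alpha$, $|A_\alpha|\le m\binom{n_i-1}{m-1}$ and $A_{\alpha\beta}\subset A_\alpha\cup A_\beta$. So for block $i$ one may take $\kappa_i\le 2m\binom{n_i-1}{m-1}$ and $\tau_i\le Cm^2\binom{n_i-1}{m-1}^2$, while $\|\xi_\alpha\|_4\le 2\frac{n_i}{N}\binom{n_i}{m}^{-1}\|h\|_4$. Since $\binom{n_i-1}{m-1}/\binom{n_i}{m}=m/n_i$, this gives $\kappa_i\|\xi_\alpha\|_4\le Cm^2\|h\|_4/N$.

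Summing over the $\binom{n_i}{m}$ summands of block $i$ gives $\sum_\alpha\kappa_i^2\|\xi_\alpha\|_4^3\le Cm^4n_i\|h\|_4^3/N^3$ and $\sum_\alpha(\kappa_i^3+\kappa_i\tau_i)\|\xi_\alpha\|_4^4\le Cm^6n_i\|h\|_4^4/N^4$. Now sum over $i$ and use $\sigma^2=\va(U_d)=\sum_i(n_i/N)^2\va(U_{N,i})\ge m^2\sigma_1^2/N$, which follows from $\va(U_{N,i})\ge m^2\sigma_1^2/n_i$. This yields the bound $Cm\|h\|_4^3/(\sqrt N\sigma_1^3)+Cm\|h\|_4^2/(\sqrt N\sigma_1^2)$. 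The second term is dominated by the first because $\sigma_1\le\|h\|_2\le\|h\|_4$.

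Since $\sigma^2$ is the exact variance, $(U_d-\theta)/\sigma=W/\sqrt{\va(W)}$, so the normalization is built in and $k$ never appears. The paper implements this through Proposition \ref{prop-distributed-inference}, i.e.\ Theorem \ref{thm-main1} with block-wise $\kappa_i,\tau_i$. The block-wise version is needed only so that unequal block sizes do not force the global $\kappa=\max_i\kappa_i$; for equal block sizes Theorem \ref{thm-main1-1} applies verbatim.
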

\begin{remark}
 Let $\sigma^{2}=\operatorname{Var}(h(X_{1}, X_{2},\cdots, X_{m}))$,  by \citet[Lemma 10.1]{chen2011normal}, we have 
    \[
\Big|\frac{n_{i}}{m^{2} \sigma_{1}^{2}}  \operatorname{Var}(U_{N,i})-1\Big|\leq \frac{(m-1)^{2}\sigma^{2}}{m(n_{i}-m+1)\sigma_{1}^{2}},
    \]
    which further implies that 
 \begin{align}\label{eq-ap-03}
    \big|\operatorname{Var}(W)-1\big|\leq \frac{1}{N} \sum_{i=1}^{k} n_{i}\cdot \Big|\frac{n_{i}}{m^{2} \sigma_{1}^{2}}  \operatorname{Var}(U_{N,i})-1\Big|\leq \frac{Cm\sigma^{2}}{N\sigma_{1}^{2}}\sum_{i=1}^{k}\frac{n_{i}}{n_{i}-m+1}.   
 \end{align}
Combine Theorem \ref{thm-distributed-U-statistics} and (\ref{eq-ap-03}), we can also obtain a Berry-Esseen bound for $W$: 
	\begin{align}\label{eq-ap-04}
	\sup_{z\in\R}\big|\p(W\leq z)-\Phi(z)\big|\leq \frac{Cm}{\sqrt{N}}\cdot\frac{\|h\|_{4}^{3}}{\sigma_{1}^{3}}+\frac{Cm\sigma^{2}}{N\sigma_{1}^{2}}\sum_{i=1}^{k}\frac{n_{i}}{n_{i}-m+1}.
	\end{align}
If $n_{i}\geq 2m$ and $k=O(\sqrt{N})$, the right-hand side of (\ref{eq-ap-04}) is of order $O(1/\sqrt{N})$.  
\end{remark}
\begin{remark} 
Compared with \cite{chensongxi2021}, our results remove condition $k=O(N^{a})$ for some positive constant  $a<1 / 2$  by normalising $W$, but the cost is the existence of finite forth moment. In the proofs of \cite{LIN201016} and \cite{chensongxi2021}, each $\sqrt{n_i} U_{N,i}/(m \sigma_1)$ is decomposed as $W_i + \Delta_i$, where $W_i$ is asymptotically normal and $\Delta_i \xrightarrow{p} 0$ as $n_i \to \infty$. Consequently, the final statistic $W$ can be written as a main term plus a remainder $\Delta = \sum_{i=1}^k \sqrt{n_i/N}\, \Delta_i$, which may not be small if the subset sizes $n_i$ are too small.  Intuitively, this approach adds up the errors from each block. So, this requires that the number of blocks is not too large.
However, the distributed $U$-statistics  can be regarded as a sum of certain dependent random variables.  Thus, by handling it as a whole, we can avoid the problem of error accumulation.
\end{remark}
\subsection{\texorpdfstring{Constrained $U$-statistics for $m$-dependent random variables}{Constrained U-statistics for m-dependent random variables}}
Let $X_{1}, \cdots, X_{n}$ be a sequence of stationary $m$-dependent random variables for some fixed integer $m\geq 0.$ Given a constraint $\mathcal{D}=(d_{1},\cdots,d_{l-1})\in \bar{\N}^{l}$, where $\bar{\N}=\N\cup\{\infty\}$, the constraint $U$-statistic is defined by
\begin{align}\label{eq-r1-01}
  U_{n}(f;\DD):=\sum_{\substack{ 1\leq i_{1}<\cdots<i_{l}\leq n\\ i_{j+1}-i_{j}\leq d_{j}}}f(X_{i_{1}},\cdots,X_{i_{l}}),
\end{align}
where $f: \R^{l}\to \R$ is a measurable function, not necessarily symmetric. Constrained $U$-statistics were first studied by \cite{janson_2023}, motivated by problems in pattern matching in random strings and permutations. To illustrate the significance of studying constrained $U$-statistics, we present two examples as follows.\par
\begin{exam}
  For a given finite alphabet $\A$, a random string from $\A$ of length $n$ can be represented as $\Xi_{n} = \xi_{1} \cdots \xi_{n}$, where $\xi_{1}, \dots, \xi_{n}$ are i.i.d. random variables taking values in $\A$. Give a word $\mathbf{w}=w_{1}\cdots w_{l}$ $(l\leq n)$ and $\mathcal{D}=(d_{1},\cdots,d_{l-1})\in \bar{\N}^{l-1}$,  an {\it occurrence} of $\mathbf{w}$ in $\Xi_{n}$ with constraint $\DD$ is a sequence of indices $i_{1}, i_{2},\cdots, i_{l}\in [n]$ satisfying that
\begin{align}\label{eq-r1-02}
 0<i_{j+1}-i_{j}\leq d_{j},\ 1\leq j<l\quad \text{and}\quad \xi_{i_{k}}=w_{k}\quad \text{for each}\ k\in [l].
\end{align}
Let $N_{n}(\mathbf{w}; \DD)$ denote the number of occurrences of $\mathbf{w}$ with constraint $\DD$ in $\Xi_{n}$. Then, one can check that
\begin{align}\label{eq-r1-03}
  N_{n}(\mathbf{w}; \DD)=U_{n}(f;\DD)
\end{align}
with
\begin{align}\label{eq-r1-04}
  f(x_{1},\cdots,x_{l})=\mathbf{1}(x_{1}x_{2}\cdots x_{l}=\mathbf{w})=\mathbf{1}(x_{i}=w_{i},\ \forall\ i\in [l]).
\end{align}
That is, $N_{n}(\mathbf{w}; \DD)$ is a special case of constrained $U$-statistic.
\end{exam}

There are many applications for string matching, such as the intrusion detection in the area of computer security,  gene searching and so on.
For more details, refer to  \cite{flajolet2006hidden, jacquet2015analytic, nicodeme2002motif}.  Another similar example for constrained $U$-statistic is pattern matching in random permutations.
\begin{exam}
  Let $\mathcal{G}_{n}$ denote the set consisting of all permutations of $[n].$ For $\pi=\pi_{1}\cdots\pi_{n}\in \mathcal{G}_{n}$ and $\tau=\tau_{1}\cdots\tau_{l}\in \mathcal{G}_{l}$, where $l\leq n$, an {\it occurrence} of $\tau$ in $\pi$ with constraint $\DD$  is a sequence of indices $i_{1}, i_{2},\cdots, i_{l}\in [n]$ satisfying that
\begin{align}\label{eq-r1-05}
  0<i_{j+1}-i_{j}\leq d_{j}\ \text{for}\ 1\leq j<l\quad\text{and}\quad \pi_{i_{k}}<\pi_{i_{l}} \Longleftrightarrow \tau_{k}<\tau_{l}\ \text{for}\ k\neq l\in [l].
\end{align}
That is, $\pi_{i_{1}}\cdots\pi_{i_{l}}$ has the same order relations as $\tau_{1}\cdots\tau_{l}.$ Let $N_{n}(\tau, \DD)$ denote the number of the occurrence of $\tau$ in $\pi$, where $\pi$ has uniformly distributed in $\mathcal{G}_{n}.$ Note that a random variable $\pi\in \mathcal{G}_{n}$ can be generated as follows. Generating a sequence of i.i.d. random variables $\{X_{i}\}_{i=1}^{n}$ with uniform distribution in $(0,1)$, and then replacing the values of $X_{1},\cdots, X_{n}$, in increasing order, by $1,\cdots,n$.  Then, $N_{n}(\tau, \DD)$ is a constrained $U$-statistic with
\[
f(x_{1},\cdots, x_{l}):=\prod_{1\le i<j\le l} \mathbf{1}\!\big\{(x_i-x_j)(\tau_i-\tau_j) > 0 \,\big\}.
\]
If $d_{1}=\cdots=d_{l-1}=\infty$, we denote $\DD$ by $\DD _{\infty}$, \cite{bona2007copies} proved that $N_{n}(\tau, \DD _{\infty})$ is asymptotically normal. For {\it vincular} $\DD\ (\text{i.e.}\  d_{i}\in \{1,\infty\})$, \cite{Hofer2017ACL} developed the asymptotic normality for $N_{n}(\tau, \DD)$.
\end{exam}
\cite{janson_2023} obtained a Berry--Esseen theorem for constrained $U$-statistics with bounded function $f$. For a given constrained $\DD $, define
\[
b:=b(\DD)=l-|\{k\in [l-1]: d_{k}<\infty\}|=1+|\{k\in [l-1]: d_{k}=\infty\}|.
\]
Let
\begin{align}\label{eq-r2-01}
W_{1}=\frac{U_{n}(f;\DD)-\e\big\{U_{n}(f;\DD)\big\}}{\sqrt{\text{Var}(U_{n}(f;\DD))}},
\end{align}
if $\lim_{n\to \infty}\text{Var}(U_{n}(f;\DD)/n^{2b-1}=\sigma_{f,\DD}^{2}>0$,
then
\begin{align*}
  \sup_{z\in\R}|\p(W_{1}\leq z)-\Phi(z)|=O(n^{-1/2}).
\end{align*}
In this paper, applying Theorems \ref{thm-main1-1} and \ref{thm-main2}, we obtain  Berry--Esseen bounds for non-self-normalized  and self-normalized constrained $U$-statistics. To define self-normalized constrained $U$-statistics, we first introduce some notation.
Let $\mathcal{I}\subset [n]^{l}$ denote the set of all indices $(i_{1},\cdots, i_{l})$ in the sum (\ref{eq-r1-01}), then  $U_{n}(f;\DD)-\e\{U_{n}(f;\DD)\}$ can be rewritten as  $$U_{n}(f;\DD)-\e\{U_{n}(f;\DD)\}=\sum_{i\in\II}Z_{i},$$ where $Z_{i}:=Z_{(i_{1},\cdots, i_{l})}=f(X_{i_{1}},\cdots, X_{i_{l}})-\e\{f(X_{i_{1}},\cdots, X_{i_{l}})\}.$ For any $i=(i_{1},\cdots,i_{l})\in \II$, note that $i_{1}<i_{2}<\cdots<i_{l}$,  let $s(i)$ denote the set consisting of the elements of each column of the vector $i,$ that is $
s(i)=\{i_{1},\cdots, i_{l}\}.$ In addition, for $i,j\in \II$, define
\begin{align*}
|i-j|=\min_{p\in s(i),q\in s(j)}|p-q|.
\end{align*}  Moreover, for any $i\in \II$, define
\begin{align*}
   A_{i1}&=\{k\in \II:  s(k)\cap s(i)\neq \varnothing\},\quad A_{i2}=\{k\in \II:  s(k)\cap s(i)=\varnothing\ \text{and}\ |i-k|\leq m\}.
\end{align*}
Let $A_{i}=A_{i1}\cup A_{i2}$ and $Y_{i}=\sum_{i\in A_{i}}Z_{i},$ then the self-normalized constrained  $U$-statistic is defined by
\begin{align}\label{eq-r1-09}
  W_{2}=\sum_{i\in \II}Z_{i}/V,\quad V=\sqrt{\Big(\sum_{i\in\II} Z_{i}Y_{i}-\bar{Z}\bar{Y}\Big)_{+}},
\end{align}
where $\bar{Z}=\sum_{i\in \II}Z_{i}/|\II|$ and $\bar{Y}=\sum_{i\in \II}Y_{i}/|\II|$. We have the following theorem.
\begin{theo}\label{thm-constrained-U-statistics}
For a fixed $\DD\in \bar{\N}^{l}$, let $W_{1}$ and $W_{2}$ be define by (\ref{eq-r2-01}) and (\ref{eq-r1-09}) respectively. For $i=(i_{1},\cdots,i_{l})\in \II$, denote $f_{i}$ by $f(X_{i_{1}},\cdots,X_{i_{l}})$.  Assume that $\e |f_{i}|^{4}<\infty$ and $\lim_{n\to \infty}\text{Var}(U_{n}(f;\DD)/n^{2b-1}=\sigma_{f,\DD}^{2}>0$, then 
\begin{align}\label{eq-r1-10}
	&\sup_{z\in \R} |\p(W_{1}\leq z)-\Phi(z)|\leq C_{m,l}\Big(\frac{n^{-b-1/2}}{\sigma_{f,\DD}^{3}} \sum_{i\in \II}\|f_{i}\|_{4}^{3}+\frac{n^{-b/2-1/2}}{\sigma_{f,\DD}^{2}} \big(\sum_{i\in \II}\|f_{i}\|_{4}^{4}\big)^{1/2}\Big)
\end{align}
and
\begin{align}\label{eq-r1-11}
\begin{aligned}
     &\sup_{z\in \R} |\p(W_{2}\leq z)-\Phi(z)|\\
 &\quad\leq C_{m,l}\Big(\frac{n^{-b-1/2}}{\sigma_{f,\DD}^{3}} \sum_{i\in \II}\|f_{i}\|_{4}^{3}+\frac{n^{-b/2-1/2}}{\sigma_{f,\DD}^{2}} \big(\sum_{i\in \II}\|f_{i}\|_{4}^{4}\big)^{1/2}\Big)\cdot\frac{n^{-b}}{\sigma_{f,\DD}^{2}} \sum_{i\in \II}\|f_{i}\|_{2}^{2},
\end{aligned}
\end{align}
where  $C_{m,l}$ is a positive constant depending on $m$, $l$, $f$ and $\DD,$ and $\sigma_{f,\DD}$ is a positive constant depending on $f$ and $\DD.$
\end{theo}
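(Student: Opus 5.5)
The plan is to apply Theorems \ref{thm-main1-1} and \ref{thm-main2} to the field $\{Z_i: i\in\II\}$, using the neighborhoods $A_i=A_{i1}\cup A_{i2}$ defined above. Everything then reduces to three estimates: a bound on $\kappa$ and $\tau$ in terms of $n$, the variance lower bound $\sigma^2\ge c\,n^{2b-1}$, and a bound on $\lambda$.

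\textbf{Step 1: dependence structure.} For (LD1), observe that if $k\notin A_i$, then every index of $s(k)$ is at distance more than $m$ from every index of $s(i)$. By $m$-dependence, $Z_i$ is independent of $\{Z_k:k\notin A_i\}$. For (LD2), take $i\in\II$ and $j\in A_i$. Define $A_{ij}$ as the set of $k\in\II$ with $|k-i|\le m$ or $|k-j|\le m$. Then $A_{ij}\supset A_i$, and the same $m$-dependence argument shows that $\{Z_i,Z_j\}$ is independent of $\{Z_k:k\notin A_{ij}\}$.

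\textbf{Step 2: counting.} An index $k\in\II$ has $l$ coordinates. Of these, only the $b$ ``free'' coordinates can be chosen in $O(n)$ ways each; the remaining $l-b$ coordinates are confined to windows of bounded width, namely $d_j$ around the preceding coordinate. Requiring $|k-i|\le m$ pins one coordinate of $k$ to within $m$ of one of the $l$ points of $s(i)$. This pinning removes one free factor of $n$, which gives $|A_{ij}|,|N_i|\le C_{m,l,\DD}\,n^{b-1}$. Here I use that the definition is symmetric, so $N_i$ coincides with $A_i$. Hence $\kappa\le Cn^{b-1}$.

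For $\tau$, note that $(k,l')\in D_i$ requires three things: $i\in A_{kl'}$, so $k$ or $l'$ is within $m$ of $i$; $l'$ is within $m$ of $k$; and each such condition pins one coordinate. This gives $\tau\le Cn^{b-1}\cdot n^{b-1}=Cn^{2b-2}$, so $\kappa^{1/2}(\kappa+\tau^{1/2})\le Cn^{(b-1)/2}n^{b-1}$. Also $|\II|\asymp n^{b}$.

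\textbf{Step 3: plugging in.} The assumption on the variance gives $\sigma^2\ge c\,\sigma_{f,\DD}^2n^{2b-1}$ for large $n$; for small $n$ the bounds are trivial after enlarging $C_{m,l}$. Then
\[
\frac{\kappa^2}{\sigma^3}\le \frac{Cn^{2b-2}}{\sigma_{f,\DD}^3n^{3b-3/2}}=\frac{Cn^{-b-1/2}}{\sigma_{f,\DD}^3},
\qquad
\frac{\kappa^{1/2}(\kappa+\tau^{1/2})}{\sigma^2}\le \frac{Cn^{(3b-3)/2}}{\sigma_{f,\DD}^2n^{2b-1}}=\frac{Cn^{-b/2-1/2}}{\sigma_{f,\DD}^2}.
\]
Next, $\|Z_i\|_p\le 2\|f_i\|_p$. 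Substituting these into Theorem \ref{thm-main1-1} gives (\ref{eq-r1-10}). For (\ref{eq-r1-11}), compute
\[
\lambda=\kappa\sum\|Z_i\|_2^2/\sigma^2\le C n^{b-1}\sum\|f_i\|_2^2/(\sigma_{f,\DD}^2 n^{2b-1})=Cn^{-b}\sum\|f_i\|_2^2/\sigma_{f,\DD}^2,
\]
and apply Theorem \ref{thm-main2}. One point needs care: $W_2$ must match the statistic of Theorem \ref{thm-main2}, with $Y_i=\sum_{k\in A_i}Z_k$ and averages taken over $|\II|$ in place of $n$. The proof of Theorem \ref{thm-main2} uses only the index-set cardinality, so this substitution is harmless.

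\textbf{Main obstacle.} I expect the main obstacle to be the counting in Step 2, particularly for $\tau$ and for the sets $A_{ij}$ when constraints $d_j<\infty$ couple coordinates. Each closeness constraint must be shown to cost a full factor of $n$, and not merely to fix a coordinate that was already bounded. I would handle this by decomposing each index into $b$ blocks. Within a block consecutive gaps are at most $d_j$, so the whole block lies in a window of width $\sum_j d_j$ around its first element. Any closeness condition then pins an entire block, and hence removes exactly one free factor of $n$.
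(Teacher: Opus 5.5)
Your proposal is correct and follows the paper's proof. Your $A_{ij}=\{k:|k-i|\le m\text{ or }|k-j|\le m\}=A_i\cup A_j$ coincides with the paper's $A_{i1}\cup A_{j1}\cup A_{ij1}\cup A_{ij2}$; the paper likewise uses $N_i=A_i$, $\kappa=O(n^{b-1})$ and $\tau\le 2\kappa^2=O(n^{2b-2})$ before applying Theorems \ref{thm-main1-1} and \ref{thm-main2}, and your block-decomposition counting and explicit exponent and $\lambda$ computations supply details the paper leaves implicit.
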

\begin{remark} If $f$ is bounded, note that $|\II|=O(n^{b})$, then by Theorem \ref{thm-constrained-U-statistics}, we have for $k=1$ or $2$,
\[
\sup_{z\in \R} |\p(W_{k}\leq z)-\Phi(z)|\leq C_{m,l}\max\big\{\|f\|_{\infty}^{5}/\sigma_{f,\DD}^{5},\ 1\big\}\cdot n^{-1/2},
\]
where $\|f\|_{\infty}=\sup_{(x_{1},\cdots,x_{l})\in \R^{l}}|f(x_{1},\cdots,x_{l})|.$
\end{remark}
\begin{remark} If for any $i\neq j\in\II$,
$\e |f_{i}|^{4}=\e |f_{j}|^{4}$, it follows from (\ref{eq-r1-10}) and (\ref{eq-r1-11}) that for $k=1$ or $2$,
\begin{align}\label{eq-r1-12}
  \sup_{z\in \R} |\p(W_{k}\leq z)-\Phi(z)|\leq C_{m,l}\max\big\{\|f_{i}\|_{4}^{5}/\sigma_{f,\DD}^{5},\ 1\big\}\cdot n^{-1/2}.
  \end{align}
\end{remark}
\begin{remark}
Replacing $i_{j+1}-i_{j}\leq d_{j}$ with $i_{j+1}-i_{j}=d_{j}$ in (\ref{eq-r1-01}) if $d_{j}<\infty$ yields another similar statistic in \cite{janson_2023}, namely {\it exactly constrained U-statistics}. Similarly, we can construct  non-self-normalized  and self-normalized  exactly constrained $U$-statistics, denoted $W_{3}$ and $W_{4}$. Applying Theorems \ref{thm-main1-1} and \ref{thm-main2}, we can obtain similar Berry–Esseen bounds for $W_{3}$ and $W_{4}$, with coefficients $C_{m,l}$ different from those for $W_{1}$ and $W_{2}$.
\end{remark}

\subsection{Decorated injective homomorphism sums} 
In this subsection, we are interested in central limit theorems for decorated random graphs models, which extend the Erd\H{o}s--R\'enyi random graph model.
 Decorated graphs  were introduced by \cite{lovasz2010limits} and subsequently developed in various directions, including Banach-space decorated graphons \citep{kunszentikovacs2022multigraph} and probability-valued graphons \citep{abraham2025probability}, as well as statistical inference for decorated graphons in multiplex networks \citep{dufour2024inference}. However, central limit theorems  for decorated subgraph statistics seem to have received much less attention in this generality so far.\par
Let $\mathcal{B}$ be an arbitrary measurable space.
An \emph{$\mathcal{B}$-decorated graph} is a pair $(G,g)$, where $G$ is a simple (loopless) graph
with vertex set $V(G)$ and edge set $E(G)$, and $g:E(G)\to \mathcal{B}$.
For fixed $n\ge 1$, let $\{X_{ij}:1\le i<j\le n\}$ be independent $\mathcal{B}$-valued random variables,
and let $G_n$ be the complete graph on the vertex set $[n]$.
We define a symmetric edge decoration $g$ on $G_n$ by
\[
g(i,j)=g(j,i):=X_{ij}\quad\text{if } i\neq j.
\]
 For a fixed $\mathcal{B}$-decorated graph $(F,f)$, a natural
 decorated subgraph counting statistic associated with $(F,f)$ in $(G_n,g)$
 is given by the \emph{decorated injective homomorphism sum}
\begin{align}\label{eq-definition-of-sh}
S_{n}=S_{n,h}:=\sum_{\substack{\varphi:V(F)\to[n]\\ \varphi\ \mathrm{injective}}}
\prod_{uv\in E(F)} h\big(f(uv), g(\varphi(u),\varphi(v))\big),
\end{align}
where $h:\mathcal{B}\times \mathcal{B}\to \R$ is a measurable function. 
There are many examples that can be written in the form of \eqref{eq-definition-of-sh}.
We illustrate this with the following two examples; for further examples, we refer the reader to \cite{lovasz2010limits}.

\begin{exam}[Erd\H{o}s--R\'enyi graph model] The Erd\H{o}s--R\'enyi random graph $G(n,p)$ is defined on the vertex set
$[n] = \{1,\dots,n\}$, where each pair of vertices $i$ and $j$ is independently
connected by an edge with probability $p$ and is absent with probability $1-p$.
For a fixed graph $F$ with at least one edge, let $N(F,G(n,p))$ denote the number of unlabeled  subgraphs of $G(n,p)$ that are isomorphic to $F$.
Let $\mathcal{B}=\{0,1\}$, let $f\equiv 1$, $h(x,y)=xy$, and let $\{X_{ij}:1\le i<j\le n\}$ be i.i.d.\
Bernoulli random variables with parameter $p$. Then,
\[
S_{n}=|\Aut(F)|\,N\!\left(F,G(n,p)\right),
\]
where $\Aut(F)$ denotes the automorphism group of $F$.
\end{exam}
\par
There is an extensive literature on central limit theorems for $N(F,G(n,p))$.
In particular, \cite{Rucinski1988} obtained necessary and sufficient conditions for a central limit theorem to hold for $N(F,G(n,p))$.
Moreover, several works have developed quantitative normal approximation bounds in various metrics; see, e.g., \cite{BARBOUR1989125,10.1214/19-EJP301,Peter-Kolmogorov-bounds-2023}.
We also refer to \cite{ShaoZhang2025} for central limit theorems for weighted triangle counts in inhomogeneous random graphs.
\par
\begin{exam}[Edge-colored complete graph]
Let $\mathcal{B}=K$, where $K=\{c_1,c_2,\dots,c_d\}$ is a color set, and take
\[
h(x,y)=\mathbf{1}_{\{x=y\}}.
\]
Assume that $\{X_{ij}\}$ are i.i.d.with common distribution $\mu$ on 
$K$.
In this case, $S_n(h)$ counts injective copies of $F$ in $G_n$ whose edge colors match those prescribed by $f$.
\end{exam} We shall apply Theorems \ref{thm-main1-1} and \ref{thm-main2}  to establish Berry–Esseen bounds for $S_{n}$, in both the non-self-normalized and self-normalized cases. To this end, we first rewrite $S_{n}$  as a sum of locally dependent random variables. 
Introduce the index set of all injective vertex maps
\[
\mathcal{I}
:= \big\{\varphi : V(F)\hookrightarrow [n]\big\}.
\]
For $\varphi\in\mathcal{I}$, define
\[
\xi_\varphi
:= \prod_{uv\in E(F)}
   h\big(f(uv), g(\varphi(u),\varphi(v))\big).
\]
It follows from the definition of $S_{n}$ that
\begin{align}\label{eq-pa-01}
    S_n = \sum_{\varphi\in\mathcal{I}} \xi_\varphi.
\end{align}
Each $\varphi\in\mathcal{I}$ gives an image subgraph $G_\varphi$ on vertex set $\varphi(V(F))$
with edge set $\{\{\varphi(u),\varphi(v)\} : uv\in E(F)\}$.
For each $\psi \in \mathcal{I}$, let
$$
A_{\varphi}=\left\{\psi \in \mathcal{J}: e\left(G_{\psi} \cap G_{\varphi}\right) \geq 1\right\}\quad \text{and}\quad Y_{\varphi}=\sum_{\psi\in A_{\varphi}} \xi_{\psi}-\e\{\xi_{\psi}\}.
$$
Define 
\begin{align}\label{eq-app4-01}
 W_{1}=\frac{S_{n}-\e\{S_{n}\}}{\operatorname{Var}(S_{n})}, \quad \text{and}\quad W_{2}=\frac{S_{n}-\e\{S_{n}\}}{V},
\end{align}
where $\eta_{\varphi}=\xi_{\varphi}-\e\{\xi_{\varphi}\},$ 
\begin{align*}
 V=\sqrt{\Big(\sum_{\varphi \in \mathcal{I}} \eta_{\varphi} Y_{\varphi}-\bar{\eta} \bar{Y}\Big)_{+}},\quad \bar{\eta}=\sum_{\varphi \in \mathcal{I}}\frac{\eta_{\varphi}}{|\mathcal{I}|}\quad \text { and } \quad\bar{Y}=\sum_{\varphi\in \mathcal{I}} \frac{Y_{\varphi}}{|\mathcal{I}|}. 
\end{align*}
Theorems \ref{thm-main1-1} and \ref{thm-main2} lead to the following theorem.
\begin{theo}\label{thm-app4} Let $W_{1}$ and $W_{2}$ be defined as in (\ref{eq-app4-01}) and $v=v(F)$, then
\begin{align}\label{eq-app9-01}
  \sup_{z\in\R}|\p(W_{1}\leq z)-\Phi(z)|\leq  Cn^{-2v-4}\sum_{i\in \mathcal{I}}\frac{\e|\eta_{i}|^{3}}{\sigma_{n}^{3}}+C\Big(n^{-3v-6}\sum_{i\in \mathcal{I}} \frac{\e|\eta_{i}|^{4}}{\sigma_{n}^{4}}\Big)^{1/2}
\end{align}
and 
\begin{align}\label{eq-app9-02}
  \sup_{z\in\R}|\p(W_{2}\leq z)-\Phi(z)|\leq  C\lambda_{2} n^{-2v-4}\sum_{i\in \mathcal{I}}\frac{\E|\eta_{i}|^{3}}{\sigma_{n}^{3}}+C\lambda_{2}\Big(n^{-3v-6}\sum_{i\in \mathcal{I}} \frac{\E|\eta_{i}|^{4}}{\sigma_{n}^{4}}\Big)^{1/2},
\end{align}
where $\sigma_{n}^{2}=\operatorname{Var}(S_{n})$,  $\lambda_{2}=n^{-v-2}\sum_{i\in \mathcal{I}}\E|\eta_{i}|^{2}/\sigma_{n}^{2}$ and $C$ is a positive constant only depending on $F.$
\end{theo}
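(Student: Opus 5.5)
The plan is to obtain Theorem \ref{thm-app4} as a direct corollary of Theorems \ref{thm-main1-1} and \ref{thm-main2}, applied with index set $\mathcal{I}$ and summands $\eta_\varphi=\xi_\varphi-\e\{\xi_\varphi\}$. With this choice $S_n-\e\{S_n\}=\sum_{\varphi\in\mathcal{I}}\eta_\varphi$ by \eqref{eq-pa-01}. Since $V$ in \eqref{eq-app4-01} is exactly the self-normalizer of Theorem \ref{thm-main2} built from $Y_\varphi=\sum_{\psi\in A_\varphi}\eta_\psi$, the whole task reduces to three steps: checking (LD1) and (LD2) for suitable neighbourhoods, bounding $\kappa$ and $\tau$ from \eqref{eq-definition-of-kappa-tau} by powers of $n$ with constants depending only on $F$, and substituting. (I read $W_1$ as normalized by $\sigma_n=\sqrt{\operatorname{Var}(S_n)}$, as in Theorem \ref{thm-main1-1}.)

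\emph{Step 1: local dependence.} The variable $\xi_\varphi$ is a measurable function of the edge variables $\{X_{ab}:\{a,b\}\in E(G_\varphi)\}$ only, and the $X_{ab}$ are independent. If $\psi\notin A_\varphi$, then $G_\psi$ and $G_\varphi$ share no edge. Hence $\eta_\varphi$ is independent of $\{\eta_\psi:\psi\notin A_\varphi\}$, which gives (LD1). For (LD2), given $\psi\in A_\varphi$, set $A_{\varphi\psi}=A_\varphi\cup A_\psi\supset A_\varphi$. Any $\chi\notin A_{\varphi\psi}$ uses no edge of $G_\varphi\cup G_\psi$, so $\{\eta_\varphi,\eta_\psi\}$ is independent of $\{\eta_\chi:\chi\notin A_{\varphi\psi}\}$.

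\emph{Step 2: counting.} Write $v=v(F)$ and $e=e(F)\ge1$. A map $\psi\in A_\varphi$ is determined by three choices: an edge of $G_\varphi$ (at most $e$ choices), an edge $uv\in E(F)$ with an orientation mapped onto it (at most $2e$ choices), and the images of the remaining $v-2$ vertices (at most $n^{v-2}$ choices). Hence $|A_\varphi|\le 2e^2n^{v-2}$. By symmetry of the relation, $N_\varphi=A_\varphi$, and $|A_{\varphi\psi}|\le 4e^2n^{v-2}$, so $\kappa\le C_Fn^{v-2}$.

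For $\tau$, fix $\varphi$ and count pairs $(k,l)$ with $l\in A_k$ and $\varphi\in A_{kl}=A_k\cup A_l$. Either $k\in A_\varphi$ (at most $C_Fn^{v-2}$ choices) and then $l\in A_k$ (at most $C_Fn^{v-2}$ choices), or $l\in A_\varphi$ and then $k\in N_l$ (again at most $C_Fn^{v-2}$ choices each). So $\tau\le C_Fn^{2(v-2)}$ and $\tau^{1/2}\le C_Fn^{v-2}$.

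\emph{Step 3: substitution.} Substituting these bounds gives $\kappa^2\le C_Fn^{2v-4}$ and $\kappa^{1/2}(\kappa+\tau^{1/2})\le C_Fn^{3(v-2)/2}$. Theorem \ref{thm-main1-1} then yields \eqref{eq-app9-01}: the $n$-power factors multiply $\sum\e|\eta_\varphi|^3/\sigma_n^3$ and $\big(\sum\e|\eta_\varphi|^4/\sigma_n^4\big)^{1/2}$, since $\|\eta_\varphi\|_4^3\le(\e|\eta_\varphi|^4)^{3/4}$. The exponents printed in the statement should be reconciled with these counts, which I take as $n^{2v-4}$ and $n^{3v-6}$ inside the square root. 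Likewise $\lambda=\kappa\sum\|\eta_\varphi\|_2^2/\sigma_n^2\le C_F\lambda_2$, and Theorem \ref{thm-main2} gives \eqref{eq-app9-02}.

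The only nonroutine point is the bound on $\tau$. The cruder set $B_i$ of condition (LD2$'$) would cost an additional factor of $n^{v-2}$, so the counting must genuinely exploit $A_{kl}=A_k\cup A_l$. I would handle it with the two-case split above, and carefully check that no case requires choosing $k$ and $l$ both freely outside $A_\varphi$.
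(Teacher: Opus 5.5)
This is essentially the paper's own argument: the paper also takes $A_{\varphi,\psi}$ to be the maps whose image shares an edge with $G_\varphi\cup G_\psi$ (exactly your $A_\varphi\cup A_\psi$), uses $N_\varphi=A_\varphi$, $|A_\varphi|=O(n^{v-2})$ and $\tau\le 2\kappa^2$, and then quotes Theorems \ref{thm-main1-1} and \ref{thm-main2}; your reading of the exponents as $n^{2v-4}$ and $n^{3v-6}$ (and $n^{v-2}$ in $\lambda_2$, with $W_1$ normalized by $\sigma_n$) is the one consistent with these counts and with the $Cn^{-1}$ remark after the theorem. One slip remains, and the paper's one-line deduction shares it: Theorem \ref{thm-main1-1} gives $\sum_\varphi\|\eta_\varphi\|_4^3=\sum_\varphi(\E|\eta_\varphi|^4)^{3/4}$, which is at least $\sum_\varphi\E|\eta_\varphi|^3$, so the identity you invoke does not produce the printed third-moment term, and what the argument actually proves has $\|\eta_\varphi\|_4^3$ in the first term, as in Theorems \ref{thm-graph-dependency} and \ref{thm-constrained-U-statistics}.
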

\begin{remark} 
We now make a few remarks on $\operatorname{Var}(S_n)$. For subgraph counts in the Erd\H{o}s--R\'enyi random graph $G(n,p)$, by (3.7) of \cite{BARBOUR1989125} , $\va(S_{n})\geq C n^{2v-2}$ if $p$ is independent of $n$.  In this case, since $|\mathcal I| = O(n^{v})$, applying \eqref{eq-app9-01} and \eqref{eq-app9-02} yields, 
for $k=1,2$,
\[
\sup_{z\in\R}\bigl|\p(W_{k}\le z)-\Phi(z)\bigr|
   \le C n^{-1}.
\] 
To the best of our knowledge, this Berry--Esseen bound for $W_2$ is new.
\end{remark}

\section{Preliminary lemmas and propositions}\label{section-lemma}
In this section, we first give some preliminary lemmas which will be used in the proofs of  Theorems \ref{thm-main1-1}--\ref{thm-main2}, and then provide two generalized concentration inequalities under (LD1) and (LD2).  Throughout this section, for any subset $A\in [n]$, we denote 
\[
N_{A}=\{k\in[n]: A_{k}\cap A\neq \varnothing\},\quad N_{A}^{c}=[n]\backslash N_{A}\quad \text{and}\quad X_{A}=\{X_{i}: i\in A\}.
\]
Let \(\xi_A\) be an arbitrary non-negative measurable function of \(X_A\). Define 
\begin{align*}
	D_{A}&=\{(i,j): j\in A_{i}, A\cap A_{ij}\neq \varnothing\},\quad &&D_{A,1}=\{(i,j): j\in A_{i}, A\subset (A_{i}\cup A_{j})^{c}\},\nonumber\\
	D_{A,2}&=D_{A,1}\cap D_{A}^{c},\qquad 
	 &&D_{A,3}=D_{A,1}\cap D_{A}.
\end{align*}
In addition, for any subset $A\subset [n]$, let
\begin{align*}
	S_{n,A}=\sum_{i\in N_{A}^{c}}X_{i}.
\end{align*}
For simplicity of notation, in what follows we will omit the subscript $n$ whenever no confusion arises, and simply write
$
S_A := S_{n,A}.
$
\subsection{Some preliminary lemmas}
The following lemma provides  the upper bounds for $\e|\sum_{i\in [n]}\sum_{j\in  A_{i}} (X_{i}X_{j}-\e X_{i}X_{j})|^{2}$ and $\e\big\{\xi_{A}^{p}\big|\sum_{i\in N_{A}^{c}}\sum_{j\in A_{i}\cap N_{A}^{c}} (X_{i}X_{j}-\e X_{i}X_{j}) \big|^{2}\big\}$ for any $p\geq 0$, which will play important roles in the proofs of Theorems \ref{thm-main1-1}--\ref{thm-main2}.
\begin{lem}\label{lem-XiYi-moment} Under \textup{(LD1)} and \textup{(LD2)}, for any $p\geq 0$, we have
	\begin{align}\label{eq-t-02}
		\e\Big\{\xi_{A}^{p}\Big|\sum_{i\in N_{A}^{c}}\sum_{j\in A_{i}\cap N_{A}^{c}} (X_{i}X_{j}-\e X_{i}X_{j}) \Big|^{2}\Big\}\leq 4\|\xi_{A}\|_{p}^{p}\cdot (\gamma_{A}^{2}+4\gamma),
	\end{align} 
    	where 
	\begin{align*}
		\gamma_{A}&=\sum_{(i,j)\in  D_{A}} \|X_{i}\|_{4}\|X_{j}\|_{4},\quad
		\gamma=\sum_{i \in [n]}\sum_{j\in A_{i}}\sum_{k\in A_{ij}}\sum_{l\in N_{k}\cup A_{k}}\|X_i\|_{4}\|X_j\|_{4}\|X_{k}\|_{4}\|X_{l}\|_{4}.
	\end{align*}
	As a consequence,
	\begin{align}\label{eq-t-01}
		\e\Big|\sum_{i\in [n]}\sum_{j\in A_{i}} (X_{i}X_{j}-\e X_{i}X_{j}) \Big|^{2}\leq 16\gamma.
	\end{align}
\end{lem}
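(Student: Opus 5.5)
The plan is to split the double sum according to whether the pair $(i,j)$ "sees" the set $A$ through $A_{ij}$. Write
\[
T:=\sum_{i\in N_{A}^{c}}\sum_{j\in A_{i}\cap N_{A}^{c}} Z_{ij},\qquad Z_{ij}:=X_iX_j-\e X_iX_j,
\]
and decompose $T=T_1+T_2$. Here $T_1$ collects the pairs $(i,j)\in D_A$, i.e.\ those with $A\cap A_{ij}\neq\varnothing$. The sum $T_2$ collects the remaining pairs, for which $A\subset A_{ij}^c$. Then $\e\{\xi_A^pT^2\}\le 2\e\{\xi_A^pT_1^2\}+2\e\{\xi_A^pT_2^2\}$. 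The aim is to show $\e\{\xi_A^pT_1^2\}\le 2\|\xi_A\|_p^p\gamma_A^2$ and $\e\{\xi_A^pT_2^2\}\le 8\|\xi_A\|_p^p\gamma$, which gives \eqref{eq-t-02}.

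\emph{The term $T_1$.} Expand the square into $\sum_{(i,j),(k,l)\in D_A}\e\{\xi_A^p Z_{ij}Z_{kl}\}$ and bound each summand in absolute value. Since $i,j,k,l\in N_A^c$, each of $A_i,A_j,A_k,A_l$ is disjoint from $A$. By (LD1), each single variable $X_i$ is then independent of $X_A$. I would use this to reduce $\e\{\xi_A^p|X_iX_jX_kX_l|\}$, and the analogous cross terms with the centring constants, to $\|\xi_A\|_p^p$ times a product of fourth-moment norms. Concretely, I would apply H\"older's inequality in the form $|X_iX_jX_kX_l|\le\frac14(X_i^4+X_j^4+X_k^4+X_l^4)$ up to normalisation by the $L_4$ norms, and then factor out $\xi_A^p$ using the independence of each $X_m$ from $X_A$. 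Summing over the two pairs yields $C\|\xi_A\|_p^p\gamma_A^2$. Care with constants should give the factor in \eqref{eq-t-02}.

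\emph{The term $T_2$.} Expand $\e\{\xi_A^pT_2^2\}=\sum\e\{\xi_A^pZ_{ij}Z_{kl}\}$ over pairs outside $D_A$. Suppose $k,l\notin A_{ij}$. Then $A\cup\{k,l\}\subset A_{ij}^c$, so by (LD2) the pair $\{X_i,X_j\}$ is independent of $(X_A,X_k,X_l)$, and the term equals $\e Z_{ij}\cdot\e\{\xi_A^pZ_{kl}\}=0$. Only tuples with $k\in A_{ij}$ or $l\in A_{ij}$ survive. Using symmetry and $l\in A_k\Leftrightarrow$ (roles swapped) $k\in N_l$, these are indexed by $i$, $j\in A_i$, $k\in A_{ij}$, $l\in N_k\cup A_k$. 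This is exactly the index set in $\gamma$, up to a factor $2$ coming from the two cases. Each surviving term is bounded by $\|\xi_A\|_p^p$ times $4\|X_i\|_4\|X_j\|_4\|X_k\|_4\|X_l\|_4$, by the same moment/independence reduction as for $T_1$. This gives $\e\{\xi_A^pT_2^2\}\le 8\|\xi_A\|_p^p\gamma$.

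The main obstacle is this reduction step $\e\{\xi_A^p|X_iX_jX_kX_l|\}\lesssim\|\xi_A\|_p^p\prod\|X_\cdot\|_4$. Individual independence of each $X_m$ from $X_A$ does not obviously give independence of their product from $X_A$. My first attempt is the AM--GM bound $|X_iX_jX_kX_l|\le\sum_m \|\cdot\|\,X_m^4/\|X_m\|_4^4$ (suitably weighted). This reduces the problem to single variables $X_m$ with $m\in N_A^c$, for which (LD1) gives $\e\{\xi_A^pX_m^4\}=\|\xi_A\|_p^p\e X_m^4$. The centring terms $\e X_iX_j$ are handled the same way.

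Finally, \eqref{eq-t-01} follows by taking $A=\varnothing$, $p=0$ and $\xi_A\equiv1$. In that case $N_\varnothing=\varnothing$, so $N_\varnothing^c=[n]$ and the sum runs over all pairs $i\in[n]$, $j\in A_i$. Moreover $D_\varnothing=\varnothing$, so $\gamma_\varnothing=0$, and the bound becomes $4\cdot4\gamma=16\gamma$.
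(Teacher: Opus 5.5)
Your ingredients are the same as the paper's. You use (LD2) to kill $\e\{\xi_A^pZ_{ij}Z_{kl}\}$ when $A\cap A_{ij}=\varnothing$ and $k,l\notin A_{ij}$. You bound each surviving term via AM--GM, using that every $X_m$ with $m\in N_A^c$ is independent of $X_A$; with the centring constants this costs a factor $4$ per term. And you take $A=\varnothing$ to get \eqref{eq-t-01}.

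The gap is in the $\gamma_A^2$ term. Your own per-term bound only gives $\e\{\xi_A^pT_1^2\}\le 4\|\xi_A\|_p^p\gamma_A^2$. After the split $\e\{\xi_A^pT^2\}\le 2\e\{\xi_A^pT_1^2\}+2\e\{\xi_A^pT_2^2\}$ you therefore get $8\|\xi_A\|_p^p\gamma_A^2+16\|\xi_A\|_p^p\gamma$, not the stated bound. No amount of care with constants recovers $2\|\xi_A\|_p^p\gamma_A^2$, because that intermediate claim is false. Here is a counterexample:
\begin{itemize}
\item Let $n=3$ and $X_1$ Rademacher. Let $Y\in\{\pm1\}$ be independent of $X_1$ with $\p(Y=-1)=q$, and set $X_2=X_1Y$ and $X_3=Y-\e Y$.
\item $X_3$ is independent of $X_1$ and of $X_2$ separately. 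Hence $A_1=A_2=\{1,2\}$, $A_3=\{1,3\}$, $A_{11}=A_{22}=\{1,2\}$, $A_{33}=\{1,3\}$ and $A_{12}=A_{21}=A_{31}=[3]$ satisfy (LD1)--(LD2).
\item Take $A=\{3\}$, $p=1$ and $\xi_A=\mathbf{1}(Y=-1)$. Then $N_A^c=\{1,2\}$, $T_2=Z_{11}+Z_{22}=0$ and $T_1=2(X_1X_2-\e X_1X_2)=2X_3$.
\item So $\e\{\xi_AT_1^2\}=16q(1-q)^2$. On the other hand $\gamma_A=2+\|X_3\|_4+\|X_3\|_4^2\le 2+2q^{1/4}+4q^{1/2}$, so $2\|\xi_A\|_1\gamma_A^2\approx 8q$ for small $q$.
\end{itemize}
The lemma itself survives this example thanks to the $16\gamma$ term; only your intermediate bound fails.

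The fix is to avoid $(a+b)^2\le 2a^2+2b^2$. Expand $\e\{\xi_A^pT^2\}=\sum_{(i,j),(k,l)\in D_{A,1}}\e\{\xi_A^pZ_{ij}Z_{kl}\}$ and apply the vanishing argument in both slots:
\begin{itemize}
\item If $(i,j)\in D_{A,2}$, the term vanishes unless $k$ or $l$ lies in $A_{ij}$. Your reindexing bounds these survivors by $8\|\xi_A\|_p^p\gamma$.
\item If $(i,j)\in D_{A,3}$ and $(k,l)\in D_{A,2}$, the term vanishes unless $i$ or $j$ lies in $A_{kl}$, since otherwise $\{X_k,X_l\}$ is independent of $(X_A,X_i,X_j)$. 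The same reindexing again gives at most $8\|\xi_A\|_p^p\gamma$.
\item The remaining block $D_{A,3}\times D_{A,3}$ contributes at most $4\|\xi_A\|_p^p\gamma_A^2$.
\end{itemize}
The total is exactly $4\|\xi_A\|_p^p(\gamma_A^2+4\gamma)$. This is essentially the paper's decomposition: $H_{11}=0$ and $H_{121}=0$ are your two vanishing statements, $H_{122}$ is the $\gamma_A^2$ block, and $H_{123},H_{124},H_2,H_3$ each cost $4\|\xi_A\|_p^p\gamma$.
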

\begin{proof} 
	Observe that
	\begin{align}\label{eq-t-09}
		\e\Big\{ \xi_{A}^{p}\Big|\sum_{i\in N_{A}^{c}}\sum_{j\in A_{i}\cap N_{A}^{c}} (X_{i}X_{j}-\e X_{i}X_{j}) \Big|^{2}\Big\}= H_{1}+H_{2}+H_{3},
	\end{align}
	where
	\begin{align*}
		H_{1}&= \sum_{i \in N_{A}^{c}}\sum_{j\in A_{i}\cap N_{A}^{c}}\sum_{i'\in N_{A}^{c}\cap A_{ij}^{c}}\sum_{j'\in N_{A}^{c}\cap A_{i'}\cap A_{ij}^{c}}\e\big\{ \xi_{A}^{p}(X_i X_j-\e X_{i}X_{j})(X_{i'} X_{j'}-\e X_{i'} X_{j'}) \big\},\\
		H_{2}&= \sum_{i \in N_{A}^{c}}\sum_{j\in A_{i}\cap N_{A}^{c}}\sum_{i'\in N_{A}^{c}\cap A_{ij}^{c}}\sum_{j'\in N_{A}^{c}\cap A_{i'}\cap A_{ij}}\e\big\{ \xi_{A}^{p}(X_i X_j-\e X_{i}X_{j})(X_{i'} X_{j'}-\e X_{i'} X_{j'}) \big\},\\
		H_{3}&= \sum_{i \in N_{A}^{c}}\sum_{j\in A_{i}\cap N_{A}^{c}}\sum_{i'\in N_{A}^{c}\cap A_{ij}}\sum_{j'\in N_{A}^{c}\cap A_{i'}}\e\big\{ \xi_{A}^{p}(X_i X_j-\e X_{i}X_{j})(X_{i'} X_{j'}-\e X_{i'} X_{j'}) \big\}.
	\end{align*}
	For $H_{1}$, for the sake of convenience, let 
	\begin{align}\label{eq-definition-of-Iijk}
		I_{i,j,k}=\mathbf{1}(i\in A_{jk}^{c}).
	\end{align}
 Note that by the definition of $N_{A}$, we have 
	\[
	i\in N_{A}^{c} \Longleftrightarrow  A\cap A_{i}=\varnothing,
	\]
	then $\{(i,j);  i\in N_{A}^{c}, j\in A_{i}\cap N_{A}^{c}\}=D_{A,1}$. Using the fact that $D_{A,1}=D_{A,2}\cup D_{A,3}$, we have 
	\begin{align}\label{eq-t-11}
    \begin{aligned}
        H_{1}&= \sum_{(i,j)\in D_{A,1}}\sum_{(i',j')\in D_{A,1}}\e\big\{ \xi_{A}^{p}(X_i X_j-\e X_{i}X_{j})(X_{i'} X_{j'}-\e X_{i'} X_{j'}) \big\}I_{i',i,j}I_{j',i,j}\\
		&:=H_{11}+H_{12},
    \end{aligned}	
	\end{align}
    where 
    \begin{align*}
        H_{11}&=\sum_{(i,j)\in D_{A,2}}\sum_{(i',j')\in D_{A,1}}\e\big\{ \xi_{A}^{p}(X_i X_j-\e X_{i}X_{j})(X_{i'} X_{j'}-\e X_{i'} X_{j'}) \big\}I_{i',i,j}I_{j',i,j},\nonumber\\
        H_{12}&=\sum_{(i,j)\in D_{A,3}}\sum_{(i',j')\in D_{A,1}}\e\big\{ \xi_{A}^{p}(X_i X_j-\e X_{i}X_{j})(X_{i'} X_{j'}-\e X_{i'} X_{j'})\big\}I_{i',i,j}I_{j',i,j}.
    \end{align*}
	For $H_{11}$, note that if $i'\in A_{ij}^{c}, j'\in A_{ij}^{c}, A\in A_{ij}^{c}$, then $(X_{i}, X_{j})$ is independent of $(\xi_{A},X_{i'}, X_{j'})$, which further implies
	\begin{align}\label{eq-t-12}
		H_{11}=0.
	\end{align}
	For $H_{12}$, we rewrite $H_{12}$ as follows:
	\begin{align}\label{eq-t-13}
    \begin{aligned}
		H_{12}&=H_{121}+H_{122}+H_{123}+H_{124},
        \end{aligned}
	\end{align}
    where $M(i,j,i',j')=(X_i X_j-\e X_{i}X_{j})(X_{i'} X_{j'}-\e X_{i'} X_{j'})$ and
    \begin{align*}
        H_{121}&=\sum_{(i,j)\in  D_{A,3}}\sum_{(i',j')\in D_{A,2}}\e\big\{ \xi_{A}^{p}M(i,j,i',j')\big\}I_{i',i,j}I_{j',i,j}\cdot I_{i,i',j'}I_{j,i',j'},\\
        H_{122}&=\sum_{(i,j)\in  D_{A,3}}\sum_{(i',j')\in D_{A,3}}\e\big\{ \xi_{A}^{p}M(i,j,i',j')\big\}
    I_{i',i,j}I_{j',i,j}\cdot I_{i,i',j'}I_{j,i',j'},\\
        H_{123}&=\sum_{(i,j)\in  D_{A,3}}\sum_{(i',j')\in D_{A,1}}\e\big\{ \xi_{A}^{p}M(i,j,i',j')\big\}I_{i',i,j}I_{j',i,j}\cdot I_{i,i',j'}(1-I_{j,i',j'}),\\
        H_{124}&=\sum_{(i,j)\in  D_{A,3}}\sum_{(i',j')\in D_{A,1}}\e\big\{ \xi_{A}^{p}M(i,j,i',j')\big\}I_{i',i,j}I_{j',i,j}\cdot (1-I_{i,i',j'}).
    \end{align*}
	For $H_{121}$, note that if $i\in A_{i'j'}^{c},j\in A_{i'j'}^{c}, A\in A_{i'j'}^{c}$, then  $(X_{i'}, X_{j'})$ is independent of $(X_{i}, X_{j}, \xi_{A})$, which further implies 
	\begin{align}\label{eq-t-13.5}
		H_{121}=0.
	\end{align}
	For $H_{122}$, as $(i,j)\in D_{A,3}$ and $(i',j')\in D_{A,3}$, it follows that $A\in (A_{i}\cup A_{j}\cup A_{i'}\cup A_{j'})^{c}$. By H\"{o}lder's inequality  and Young's inequality, we have
	\begin{align}\label{eq-t-14}
    \begin{aligned}
        		|H_{122}|&\leq 4\sum_{(i,j)\in  D_{A,3}}\sum_{(i',j')\in D_{A,3} }\|\xi_{A}\|_{p}^{p}\cdot \|X_{i}\|_{4}\|X_{j}\|_{4}\|X_{i'}\|_{4}\|X_{j'}\|_{4} \\
		&\leq 4\|\xi_{A}\|_{p}^{p}\cdot\gamma_{A}^{2}.
    \end{aligned}
	\end{align}
	As for $H_{123}$, by H\"{o}lder's inequality and Young's inequality again, we have 
	\begin{align}\label{eq-t-14.1}
    \begin{aligned}
		|H_{123}|&\leq 4\sum_{i \in [n]}\sum_{j\in A_{i}}\sum_{i'\in [n]}\sum_{j'\in A_{i'}} \|\xi_{A}\|_{p}^{p}\cdot \|X_{i}\|_{4}\|X_{j}\|_{4}\|X_{i'}\|_{4}\|X_{j'}\|_{4}\mathbf{1}(j\in A_{i'j'})\\
		&\leq 4 \|\xi_{A}\|_{p}^{p}\cdot\sum_{i \in [n]}\sum_{j\in A_{i}}\sum_{k\in A_{ij}}\sum_{l\in N_{k}}\|X_i\|_{4}\|X_j\|_{4}\|X_{k}\|_{4}\|X_{l}\|_{4}.
        \end{aligned}
	\end{align}
	Similarly, we have 
	\begin{align}\label{eq-t-14.2}
		|H_{124}|\leq 4 \|\xi_{A}\|_{p}^{p}\cdot\sum_{i \in [n]}\sum_{j\in A_{i}}\sum_{k\in A_{ij}}\sum_{l\in A_{k}}\|X_i\|_{4}\|X_j\|_{4}\|X_{k}\|_{4}\|X_{l}\|_{4}.
	\end{align}
	Combining (\ref{eq-t-11})--(\ref{eq-t-14.2}) yields that
	\begin{align}\label{eq-t-19}
		|H_{1}|\leq 4\|\xi_{A}\|_{p}^{p}\cdot(\gamma_{A}^{2}+2\gamma).
	\end{align}
	For $H_{2}$ and $H_{3}$, with similar argument as that leading to (\ref{eq-t-14.1}), we have
	\begin{align}\label{eq-t-20}
		|H_{2}|\leq  4\gamma \quad\text{and}\quad |H_{3}|\leq  4\gamma. 
	\end{align}
	Then (\ref{eq-t-02}) follows from (\ref{eq-t-09}), (\ref{eq-t-19}) and (\ref{eq-t-20}).
\end{proof}
The following lemma  provides an  upper bound for  $\e \{\xi_{A}^{p}S_{A}^{2}\}$.
\noindent
\begin{lem}\label{lem-second-moment-for-S}
	Assume that  \textup{(LD1)} and \textup{(LD2)} hold, then for any $p\geq 0$, we have
	\begin{align}\label{eq-i-01}
		\e\{\xi_{A}^{p}S_{A}^{2}\}\leq \|\xi_{A}\|_{p}^{p}\cdot \Big(\e \{S_{A}^{2}\}+2\sum_{(i,j)\in D_{A}}\|X_{i}\|_{4}\|X_{j}\|_{4}\Big).
	\end{align}
\end{lem}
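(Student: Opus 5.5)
Expanding the square gives
\[
\e\{\xi_{A}^{p}S_{A}^{2}\}=\sum_{i\in N_{A}^{c}}\sum_{j\in N_{A}^{c}}\e\{\xi_{A}^{p}X_{i}X_{j}\}.
\]
For $i\in N_A^c$ we have $A\cap A_i=\varnothing$, so by (LD1) $X_i$ is independent of $\{X_k:k\notin A_i\}$, which includes $X_A$ and hence $\xi_A$. Thus only pairs with $j\in A_i$ survive: if $j\notin A_i$, then $X_i$ is independent of $(\xi_A,X_j)$, and $\e X_i=0$ kills the term. So
\[
\e\{\xi_{A}^{p}S_{A}^{2}\}=\sum_{i\in N_{A}^{c}}\sum_{j\in A_{i}\cap N_{A}^{c}}\e\{\xi_{A}^{p}X_{i}X_{j}\},
\]
and the sum runs over pairs $(i,j)\in D_{A,1}$, exactly as identified in the proof of Lemma \ref{lem-XiYi-moment}. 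Split $D_{A,1}=D_{A,2}\cup D_{A,3}$.

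For $(i,j)\in D_{A,2}$ we have $A\cap A_{ij}=\varnothing$. By (LD2), $\{X_i,X_j\}$ is independent of $X_A$, so $\e\{\xi_A^pX_iX_j\}=\|\xi_A\|_p^p\,\e\{X_iX_j\}$.

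For $(i,j)\in D_{A,3}\subset D_A$ we bound crudely. Write $\e\{\xi_A^pX_iX_j\}=\|\xi_A\|_p^p\e\{X_iX_j\}+\e\{\xi_A^p X_iX_j\}-\|\xi_A\|_p^p\e\{X_iX_j\}$. For the difference I would use that $A\cap A_i=\varnothing$, so $\xi_A$ is independent of $X_i$ alone. I expect the intended bound is $2\|\xi_A\|_p^p\|X_i\|_4\|X_j\|_4$, and the main obstacle is justifying $|\e\{\xi_A^pX_iX_j\}|\le \|\xi_A\|_p^p\|X_i\|_4\|X_j\|_4$. Hölder in the form $\|\xi_A^p\|_{r}$ would need a higher moment of $\xi_A$, so the plan is to exploit independence via conditioning instead:
\[
\e\{\xi_A^pX_iX_j\}=\e\{\xi_A^p\,\e[X_iX_j\mid X_A]\}.
\]
One needs to control $\e[X_iX_j\mid X_A]$. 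This is not directly bounded, so I would alternatively use $|X_iX_j|\le (X_i^2+X_j^2)/2$ together with the independence of $X_i$ from $\xi_A$, giving $\|\xi_A\|_p^p\e X_i^2$. For the $X_j^2$ part, $j\in N_A^c$ also gives $A\cap A_j=\varnothing$, so $X_j$ is independent of $X_A$ as well. Hence
\[
|\e\{\xi_A^pX_iX_j\}|\le \tfrac12\|\xi_A\|_p^p(\e X_i^2+\e X_j^2).
\]
To get the product form $\|X_i\|_4\|X_j\|_4$, apply this with the rescaling $|X_iX_j|\le (tX_i^2+t^{-1}X_j^2)/2$ and $t=\|X_j\|_2/\|X_i\|_2$. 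This yields $\|\xi_A\|_p^p\|X_i\|_2\|X_j\|_2\le \|\xi_A\|_p^p\|X_i\|_4\|X_j\|_4$. The same bound holds for $|\e X_iX_j|$, so each $D_{A,3}$ term differs from $\|\xi_A\|_p^p\e X_iX_j$ by at most $2\|\xi_A\|_p^p\|X_i\|_4\|X_j\|_4$.

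Summing, $\e\{\xi_A^pS_A^2\}\le \|\xi_A\|_p^p\sum_{(i,j)\in D_{A,1}}\e X_iX_j+2\|\xi_A\|_p^p\sum_{(i,j)\in D_A}\|X_i\|_4\|X_j\|_4$. The first sum equals $\e S_A^2$ by the same vanishing argument applied with $\xi_A\equiv1$. This gives (\ref{eq-i-01}).
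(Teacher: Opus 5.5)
Your proof is correct and follows the paper's argument. You reduce to pairs in $D_{A,1}$ via (LD1), use (LD2) to factor out $\xi_A^p$ on $D_{A,2}$, and on $D_{A,3}$ use the separate independence of $\xi_A$ from $X_i$ and from $X_j$ to get $|\e\{\xi_A^pX_iX_j\}|\le \|\xi_A\|_p^p\|X_i\|_2\|X_j\|_2$; your rescaled AM--GM step (equivalently, Cauchy--Schwarz with weight $\xi_A^p$) justifies this bound, which the paper leaves implicit, and adding and subtracting $\|\xi_A\|_p^p\e\{X_iX_j\}$ termwise on $D_{A,3}$ gives the same factor $2$ as the paper's regrouping of the $D_{A,2}$ sum.
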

\begin{proof}
	Observe that
	\begin{align*}
        	\e\{\xi_{A}^{p}S_{A}^{2}\}&=\sum_{i\in N_{A}^{c}}\sum_{j\in N_{A}^{c}}\e\{\xi_{A}^{p}X_{i}X_{j}\}=
		\sum_{i\in N_{A}^{c}}\sum_{j\in N_{A}^{c}\cap A_{i}}\e\{\xi_{A}^{p}X_{i}X_{j}\}\\
		&:=R_{1}+R_{2},
	\end{align*}
    where 
    \[
R_{1}=\sum_{(i,j)\in D_{A,2}}\e\{\xi_{A}^{p}X_{i}X_{j}\},\quad R_{2}=\sum_{(i,j)\in D_{A,3}}\e\{\xi_{A}^{p}X_{i}X_{j}\}.
    \]
	For $R_{1}$, since $(i,j)\in D_{A,2}$, then $(X_{i}, X_{j})$ is independent of $\xi_{A}$, then 
	\begin{align*}
        		R_{1}&= \sum_{(i,j)\in D_{A,2}}\e\{\xi_{A}^{p}\}\e \{X_{i}X_{j}\}\\
		&=\e\{\xi_{A}^{p}\}\cdot\sum_{i\in N_{A}^{c}}\sum_{j\in N_{A}^{c}\cap A_{i}}\e \{X_{i}X_{j}\}-\e\{\xi_{A}^{p}\}\cdot\sum_{(i,j)\in D_{A,3}}\e\{X_{i}X_{j}\}\\
		&\leq \e\{\xi_{A}^{p}\}\cdot \Big(\e \{S_{A}^{2}\}+\sum_{(i,j)\in D_{A}}\|X_{i}\|_{4}\|X_{j}\|_{4}\Big).
	\end{align*}
	For $R_{2}$, note that $A\in A_{i}^{c}$ and $A\in A_{j}^{c}$, then $\xi_{A}\perp\!\!\!\perp X_{i}$ and $\xi_{A}\perp\!\!\!\perp X_{j}$, which further implies 
	\begin{align*}
        		|R_{2}|&\leq \e\{\xi_{A}^{p}\}\sum_{(i,j)\in D_{A,3}}\|X_{i}\|_{2}\|X_{j}\|_{2}
		\leq \|\xi_{A}\|_{p}^{p}\sum_{(i,j)\in D_{A}}\|X_{i}\|_{4}\|X_{j}\|_{4}.
	\end{align*}
	
\end{proof}
\noindent
 Let
\begin{align*}
  \bar{V}=\psi\Big(\sum_{i\in [n]}X_{i}Y_{i}\Big)\quad \text{and}\quad \bar{W}_{2}=\sum_{i\in [n]}X_{i}/\bar{V},
\end{align*}
where $\psi(x)=\big((x\vee (\sigma^{2}/4)) \wedge (2\sigma^{2})\big)^{1/2}.$ The following lemma is useful in the proof of Theorem \ref{thm-main2}.
\begin{lem}\label{lem-R4}
Assume that \textup{(LD1)} holds  and  \begin{align}\label{eq-t1-00}
     \kappa^{2}\sum_{i\in [n]}  \e|X_{i}|^{3}/\sigma^{3}\leq  1/500,
\end{align}
 then for any absolutely continuous function $f$ satisfying  $\|f\|_{\infty} \leq 1$ and $\left\|f^{\prime}\right\|_{\infty} \leq 1$, we have
\begin{align}\label{eq-t1-01}
 \sum_{i \in [n]}\Big|\mathbb{E}\Big\{\frac{X_{i}}{\bar{V}} f\Big(\bar{W}_{2}-\frac{Y_{i}}{\bar{V}} \Big)\Big\}\Big| \leq \frac{27\kappa^{2}}{\sigma^{3}}   \sum_{i\in [n]}\e|X_{i}|^{3}+\frac{11\kappa^{3}}{\sigma^{4}}   \sum_{i\in [n]}\e|X_{i}|^{4},
\end{align}
where $Y_{i}=\sum_{j\in A_{i}}X_{i}.$
\end{lem}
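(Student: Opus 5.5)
We prove Lemma \ref{lem-R4} by comparing each summand with a version in which $X_i$ has been decoupled from the normalizer and from the argument of $f$, and then exploiting $\e X_i=0$ together with (LD1). The natural leave-out quantity is
\[
\bar V^{(i)}=\psi\Big(\sum_{k\in [n]}X_kY_k-\sum_{k\in B_i}X_kY_k\Big),
\]
where $B_i=\{k: k\in A_i \text{ or } A_k\cap A_i\neq\varnothing\}$ collects every index whose product $X_kY_k$ involves some $X_j$ with $j\in A_i$. The argument of $f$ is $\bar W_2-Y_i/\bar V=\sum_{j\notin A_i}X_j/\bar V$, and we replace it by $\sum_{j\notin A_i}X_j/\bar V^{(i)}$. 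By construction both $\bar V^{(i)}$ and $\sum_{j\notin A_i}X_j$ are functions of $\{X_j:j\notin A_i\}$, so (LD1) gives
\[
\e\Big\{\frac{X_i}{\bar V^{(i)}}f\Big(\frac{\sum_{j\notin A_i}X_j}{\bar V^{(i)}}\Big)\Big\}=\e X_i\cdot\e\{\cdots\}=0 .
\]
Each summand therefore reduces to the difference between the original expression and this decoupled one.

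To control the difference, note that $\psi$ is truncated to $[\sigma/2,\sqrt2\,\sigma]$ and is Lipschitz in the sense that $|\psi(x)-\psi(y)|\le |x-y|/\sigma$, because $\psi$ is the square root of a clipped quantity whose values are at least $\sigma^2/4$. Hence
\[
\Big|\frac1{\bar V}-\frac1{\bar V^{(i)}}\Big|\le \frac{4}{\sigma^3}\sum_{k\in B_i}|X_kY_k|.
\]
Using $\|f\|_\infty\le1$, $\|f'\|_\infty\le1$ and the identity
\[
\frac{a}{v}-\frac{a}{v'}=a\Big(\frac1v-\frac1{v'}\Big),\qquad a=\sum_{j\notin A_i}X_j,
\]
the difference for index $i$ is bounded by two pieces. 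The first, coming from the change in $1/\bar V$ multiplying $X_i$, is at most $\frac{4}{\sigma^3}\e\big\{|X_i|\sum_{k\in B_i}|X_k||Y_k|\big\}$. The second, coming from the change in the argument of $f$, is at most $\frac{C}{\sigma}\e\big\{|X_i|\,\big|\sum_{j\notin A_i}X_j\big|\cdot\big|\bar V^{-1}-(\bar V^{(i)})^{-1}\big|\big\}$.

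The first piece is a sum of triple products $|X_iX_kX_l|$ with $k\in B_i$ and $l\in A_k$. Applying the AM--GM inequality $|abc|\le(|a|^3+|b|^3+|c|^3)/3$ and counting, via $|N_i|,|A_k|\le\kappa$, how often each index occurs, gives a bound of the form $C\kappa^2\sum\e|X_i|^3/\sigma^3$.

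The second piece is the main obstacle, because $\sum_{j\notin A_i}X_j$ is a global sum and cannot simply be dominated by $\sigma$. Bounding it crudely by $\sqrt2\,\sigma\bar W_2$ does not help unless $\bar W_2$ is bounded. The plan is to exploit the truncation instead. Whenever $\bar V$ is not clipped, we have
\[
\Big|\frac{\sum_{j\notin A_i}X_j}{\bar V}-\frac{\sum_{j\notin A_i}X_j}{\bar V^{(i)}}\Big|\le \Big|\frac{\sum_{j\notin A_i}X_j}{\bar V}\Big|\cdot\frac{|\bar V-\bar V^{(i)}|}{\bar V^{(i)}}.
\]
The factor $\big|\sum_{j\notin A_i}X_j\big|$ is then handled by Cauchy--Schwarz: $\e\big|\sum_{j\notin A_i}X_j\big|^2\le C\sigma^2$, since removing the block $A_i$ changes the variance only by terms of the same size as those being estimated. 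The resulting expression is a fourth-order product of the form $\e\big\{|X_i|\sum_{k\in B_i}|X_kY_k|\,\big|\sum_{j\notin A_i}X_j\big|\big\}/\sigma^4$.

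To make this expression factor, I would first split off the part of $\sum_{j\notin A_i}X_j$ that is independent of $(X_i,X_k,X_l)$; this requires neighbourhoods of size $O(\kappa)$, which (LD1) and the definition of $\kappa$ provide. The independent part contributes its $L^1$ norm, which is at most $C\sigma$, and this yields the $\kappa^2\sum\e|X_i|^3/\sigma^3$ term. The dependent remainder consists of $O(\kappa)$ extra variables, and AM--GM on the resulting quadruple products gives the $\kappa^3\sum\e|X_i|^4/\sigma^4$ term. Assumption \eqref{eq-t1-00} is then used only to absorb lower-order cross terms and to fix the explicit constants $27$ and $11$.
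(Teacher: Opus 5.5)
Your architecture matches the paper's: a leave-out normalizer $\bar V^{(i)}$ that is a function of $\{X_j:j\notin A_i\}$, the fact that $S-Y_i=\sum_{j\notin A_i}X_j$ is already independent of $X_i$, the per-summand bound $\bigl(1+2|S-Y_i|/\sigma\bigr)|X_i|\,\bigl|\bar V^{-1}-(\bar V^{(i)})^{-1}\bigr|$, and a split of $|S-Y_i|$ into a far part and a local remainder. There are two genuine problems, however.

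The first is that your deletion set is too coarse, so the $\kappa^2$ claim fails. $B_i=\{k:A_k\cap A_i\neq\varnothing\}=\bigcup_{j\in A_i}N_j$ can have order $\kappa^2$ elements. Each $X_kY_k$ contains $|A_k|\le\kappa$ products, so $\sum_{k\in B_i}|X_kY_k|$ has up to $\kappa^3$ products. After AM--GM, the coefficient of $\e|X_i|^3$ is $\sum_{k\in B_i}|A_k|$, which can be of order $\kappa^3$.

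This is not a counting artefact. Take a non-degenerate degree-two $U$-statistic: the index set is the pairs of $[n]$ and $A_i$ is the set of pairs meeting $i$. Then every $k$ lies in $B_i$, so $\bar V^{(i)}\equiv\psi(0)=\sigma/2$. Moreover $\sum_{k\in B_i}|X_kY_k|\ge\bigl|\sum_kX_kY_k\bigr|$, whose mean is at least $\sigma^2\asymp n^3$. Your bound for the first piece is therefore of order $n^2\sigma^{-3}\sigma^2\asymp n^{1/2}$, while the right-hand side of \eqref{eq-t1-01} is of order $n^{-1/2}$.

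The fix is to delete only the products $X_kX_l$ that touch $A_i$, not whole terms $X_kY_k$. Take $\bar V^{(i)}=\psi\bigl(\sum_{k\notin A_i}\sum_{l\in A_k\setminus A_i}X_kX_l\bigr)$, which is still a function of $\{X_j:j\notin A_i\}$. Then $|\bar V^2-(\bar V^{(i)})^2|\le\sum_{k\in A_i}\sum_{l\in A_k}|X_kX_l|+\sum_{l\in A_i}\sum_{k\in N_l}|X_kX_l|$. This involves at most $2\kappa^2$ products, and that is where the $\kappa^2$ comes from.

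The second problem is the step where you split off the part of $\sum_{j\notin A_i}X_j$ that is independent of $(X_i,X_k,X_l)$; that step is not available. (LD1) is a statement about single variables and does not make a triple jointly independent of $\{X_j:j\notin A_i\cup A_k\cup A_l\}$. For example, take $X_1,X_2$ independent Rademacher and $X_3=X_1X_2$. Then (LD1) holds with $A_1=A_2=\{1,2\}$ and $A_3=\{1,3\}$, yet $(X_1,X_2)$ is not independent of $X_3$. The lemma does not assume (LD2), and Cauchy--Schwarz is no substitute because it would need sixth moments.

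The paper reverses the order: it applies $|X_iX_jX_k|\le\frac13(|X_i|^3+|X_j|^3+|X_k|^3)$ first, so that only a single cube $|X_m|^3$ multiplies $|S-Y_i|$.
\begin{itemize}
\item For $m=i$, $X_i$ is independent of $S-Y_i$, and $\|S-Y_i\|_2\le1.126\sigma$ by \eqref{eq-t1-00}.
\item Otherwise, write $S-Y_i=\sum_{k\notin A_i\cup A_m}X_k+\sum_{k\in A_m\setminus A_i}X_k$. Apply (LD1) to the single variable $X_m$ on the first sum, whose $L^2$ norm is at most $1.2\sigma$. Bound the second sum by $\sum_{k\in A_m}\e|X_kX_m^3|$; this is what produces the $\kappa^3\sum_i\e|X_i|^4/\sigma^4$ term.
\end{itemize}
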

\begin{proof}
Let
$$
\bar{V}^{(i)}=\psi\Big(\sum_{k\in A_{i}^{c}}\sum_{l\in A_{i}^{c}\cap A_{k}}X_{k}X_{l}\Big),
$$
then $X_{i}$ is independent of $(\bar{V}^{(i)}, S-Y_{i})$, which further implies
\begin{align}\label{eq-t1-02}
\sum_{i \in [n]}\Big|\mathbb{E}\Big\{\frac{X_{i}}{\bar{V}} f\Big(\bar{W}_{2}-\frac{Y_{i}}{\bar{V}} \Big)\Big\}\Big|= & \sum_{i \in [n]}\Big|\mathbb{E}\Big\{\frac{X_{i}}{\bar{V}} f\Big(\frac{S -Y_{i}}{\bar{V}}\Big)\Big\}-\mathbb{E}\Big\{\frac{X_{i}}{\bar{V}^{(i)}} f\Big(\frac{S -Y_{i}}{\bar{V}^{(i)}}\Big)\Big\}\Big|.
\end{align}
Note that $\bar{V} \geqslant \sigma / 2$ and $\bar{V}^{(i)} \geqslant \sigma / 2$. Then, it follows that
\begin{align}\label{eq-t1-03}
\begin{aligned}
    \Big|\frac{1}{\bar{V}}-\frac{1}{\bar{V}^{(i)}}\Big| & \leq \frac{1}{\bar{V} \bar{V}^{(i)}\left(\bar{V}+\bar{V}^{(i)}\right)}\Big|\sum_{k \in A_i} \sum_{l \in A_k} X_{k} X_{l}+\sum_{k \in A_i^c} \sum_{l \in A_i \cap A_k} X_{k} X_{l}\Big| \\
& \leq \frac{4}{\sigma^3}\Big(\sum_{k \in A_i} \sum_{l \in A_k} |X_{k} X_{l}|+\sum_{k \in A_i^c} \sum_{l \in A_i \cap A_k} |X_{k} X_{l}|\Big).
\end{aligned}
\end{align}
By (\ref{eq-t1-02}), (\ref{eq-t1-03}) and the fact that $\|f\|_{\infty}\leq 1$, $\|f'\|_{\infty}\leq 1$, then
\begin{align}\label{eq-t1-05}
\begin{aligned}
    &\sum_{i \in [n]}\Big|\mathbb{E}\Big\{\frac{X_{i}}{\bar{V}} f\Big(\bar{W}_{2}-\frac{Y_{i}}{\bar{V}} \Big)\Big\}\Big|\\
    &\quad\leq  \sum_{i \in [n]} \mathbb{E}\Big\{\Big(1+\frac{2\left|S -Y_{i}\right|}{\sigma}\Big)\cdot|X_{i}|\cdot\Big|\frac{1}{\bar{V}}-\frac{1}{\bar{V}^{(i)}}\Big|\Big\} \\
&\quad\leq  \frac{4}{\sigma^3} \sum_{i \in [n]} \sum_{j \in A_i} \sum_{k \in A_j} \mathbb{E}\Big\{\Big(1+\frac{2\left|S -Y_{i}\right|}{\sigma}\Big)\left|X_{i} X_{j} X_{k}\right|\Big\} \\
&\quad\quad +\frac{4}{\sigma^3} \sum_{i \in [n]} \sum_{j \in A_i} \sum_{k\in N_{j}} \mathbb{E}\Big\{\Big(1+\frac{2\left|S -Y_{i}\right|}{\sigma}\Big)\left|X_{i} X_{j} X_{k}\right|\Big\}\\
&\quad\leq 8\kappa^{2}\sum_{i\in[n]}\e|X_{i}|^{3}/\sigma^{3}+ I_{1}+I_{2}+I_{3}+I_{4},
\end{aligned}
\end{align}
where 
\begin{align*}
   I_{1}&=\frac{16 \kappa^2}{3 \sigma^4} \sum_{i \in [n]} \mathbb{E}\big\{|S -Y_{i}|\cdot\left|X_{i}^3\right|\big\},
&&I_{2}=\frac{16\kappa}{3 \sigma^4} \sum_{i \in [n]} \sum_{j\in A_{i}} \mathbb{E}\big\{|S -Y_{i}|\cdot\left|X_{j}^3\right|\big\},\nonumber \\
I_{3}&= \frac{8\kappa}{3\sigma^4} \sum_{i \in [n]} \sum_{j\in A_{i}}\sum_{k\in A_{j}}  \mathbb{E}\big\{|S -Y_{i}|\cdot\left|X_{k}^3\right|\big\},&&I_{4}=\frac{8\kappa}{3\sigma^4} \sum_{i \in [n]} \sum_{j\in A_{i}}\sum_{k\in N_{j}}  \mathbb{E}\big\{|S -Y_{i}|\cdot\left|X_{k}^3\right|\big\}.
\end{align*}
For $I_{1}$,  recall that $|A_{i}|\leq \kappa$, then by  Minkowski's inequality, H\"{o}lder's inequality and (\ref{eq-t1-00}), we have
\begin{align}\label{eq-t1-04}
  \|S-Y_{i}\|_{2}\leq \|S\|_{2}+\|Y_{i}\|_{2}\leq \sigma+ \big( \kappa^{2}\sum_{i\in [n]} \e|X_{i}|^{3} \big)^{1/3}\leq 1.126\sigma.
\end{align} By (\ref{eq-t1-04}), we have 
\begin{align}\label{eq-t1-5.1}
    I_{1}&\leq 6.01\kappa^{2}   \sum_{i\in [n]}\e|X_{i}|^{3}/\sigma^{3}.
\end{align}
For $I_{2}$, by H\"{o}lder's inequality, we have
\begin{align*}
\Big\|\sum_{A_{i}\cup A_{j}}X_{k}\Big\|_{2}\leq \Big(\e \Big|\sum_{A_{i}\cup A_{j}}X_{k}\Big|^{3}\Big)^{1/3}\leq \Big(4\kappa^{2}\sum_{k\in[n]}\e|X_{k}|^{3}\Big)^{1/3}\leq 0.2\sigma,
\end{align*}
which further implies
\begin{align}\label{eq-t1-07}
\begin{aligned}
    	\e\{ |S-Y_{i}|\cdot |X_{j}|^{3}\}&=\e\Big\{\Big|\sum_{k\in [n]\backslash (A_{i}\cup A_{j})}X_{k}\Big|\cdot |X_{j}|^{3}\Big\}+\sum_{k\in A_{j}\backslash A_{i}}\e|X_{k}X_{j}^{3}|\\
	&\leq 1.2\sigma\e|X_{j}|^{3}+\sum_{k\in A_{j}}\e|X_{k}X_{j}^{3}|.
\end{aligned}
\end{align}
It follows from (\ref{eq-t1-07}) that 
\begin{align}\label{eq-t1-08}
I_{2}&\leq  6.4\kappa^{2}   \sum_{i\in [n]}\e|X_{i}|^{3}/\sigma^{3}+5.34\kappa^{3}   \sum_{i\in [n]}\e|X_{i}|^{4}/\sigma^{4}.
\end{align}
Similarly, by (\ref{eq-t1-07}) again, we have
\begin{align}\label{eq-t1-09}
\max\big\{I_{3},I_{4}\big\}&\leq  3.2\kappa^{2}   \sum_{i\in [n]}\e|X_{i}|^{3}/\sigma^{3}+2.67\kappa^{3}  \sum_{i\in [n]}\e|X_{i}|^{4}/\sigma^{4}.
\end{align}
Combine (\ref{eq-t1-05}), (\ref{eq-t1-5.1}), (\ref{eq-t1-08}) and (\ref{eq-t1-09}), we complete the proof  of (\ref{eq-t1-01}).
\end{proof}
The following lemma  provides the   upper bounds for  $\e \{\xi_{A}^{p}S_{A}^{4}\}$, $\e\{S^{4}\}$ and $\e \big(\sum_{i\in [n]} Y_{i}\big)^{4}.$
\begin{lem}\label{lem-fourth-moment-for-S}
Assume that  \textup{(LD1)} and \textup{(LD2)} hold and 
	\begin{align}\label{eq-t2-01}
		\frac{|A|^{2}\kappa^{2}}{\sigma^{3}}\sum_{i\in [n]}  \|X_{i}\|_{4}^{3}\leq  \frac{1}{500}\quad\text{and}\quad \frac{|A|^{1/2}\kappa^{1/2}(\kappa+\tau^{1/2})}{\sigma^{2}}\Big(\sum_{i\in [n]} \|X_{i}\|_{4}^{4}\Big)^{1/2}\leq \frac{1}{500}.
	\end{align}
	Then for any $p\geq 0$, we have
		\begin{align}\label{eq-t2-03}
		\e \{\xi_{A}^{p}S_{A}^{4}\}\leq 13\lambda\sigma^{4}\e\{\xi_{A}^{p}\}
	\end{align}
	and
			\begin{align}\label{eq-t2-02}
		 	\e\{S^{4}\}\leq 13\lambda\sigma^{4},\qquad \e \Big(\sum_{i\in [n]} Y_{i}\Big)^{4}\leq 13\kappa^{4}\lambda\sigma^{4},
		\end{align}
where 
\[
S=\sum_{i\in [n]} X_{i}\quad\textup{and} \quad\lambda=\kappa\sum_{i\in [n]}\|X_{i}\|_{2}^{2}/\sigma^{2}.
\]
\end{lem}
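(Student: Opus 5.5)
The plan is to bound $\e\{\xi_A^pS_A^4\}$ by writing $S_A^4=S_A^2\cdot S_A^2$ and reducing everything to second-moment quantities that Lemmas \ref{lem-XiYi-moment} and \ref{lem-second-moment-for-S} already control. Write $S_A^2=\sum_{i\in N_A^c}X_i Y_{i,A}$ with $Y_{i,A}=\sum_{j\in A_i\cap N_A^c}X_j$. Plugging in the centering $\e X_iX_j$ gives
\[
S_A^2=T_A+\Big(S_A^2-\sum_{i\in N_A^c}\sum_{j\in A_i\cap N_A^c}X_iX_j\Big)+\e\{S_A^2\},
\]
where $T_A=\sum_{i\in N_A^c}\sum_{j\in A_i\cap N_A^c}(X_iX_j-\e X_iX_j)$. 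The middle term is identically zero, because $\e X_iX_j=0$ for $j\notin A_i$ by (LD1). Hence $S_A^2=T_A+\e S_A^2$, and therefore
\[
\e\{\xi_A^pS_A^4\}\le 2\,\e\{\xi_A^pT_A^2\}+2(\e S_A^2)^2\,\e\xi_A^p .
\]

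The first term is handled by Lemma \ref{lem-XiYi-moment}, which gives $\e\{\xi_A^pT_A^2\}\le 4\|\xi_A\|_p^p(\gamma_A^2+4\gamma)$. It then remains to estimate $\gamma_A$, $\gamma$ and $\e S_A^2$ in terms of $\sigma^2$ and $\lambda$, using the smallness conditions (\ref{eq-t2-01}).

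\begin{enumerate}
\item For $\gamma$: by Young's inequality, $\|X_i\|_4\|X_j\|_4\|X_k\|_4\|X_l\|_4\le\frac14\sum\|\cdot\|_4^4$. Counting the multiplicities of the indices through $|A_i|,|A_{ij}|,|N_k|\le\kappa$ and $|D_i|\le\tau$ should give $\gamma\le C\kappa(\kappa^2+\tau)\sum\|X_i\|_4^4$. By the second condition in (\ref{eq-t2-01}) this is at most a tiny multiple of $\sigma^4$. The term is only needed to be $\le c\,\sigma^4\le c\lambda\sigma^4$, since $\lambda\ge 1$ (because $\sigma^2\le\kappa\sum\|X_i\|_2^2$ by Cauchy--Schwarz and (LD1)).
\item For $\gamma_A$: $D_A$ consists of pairs $(i,j)$ with $A\cap A_{ij}\ne\varnothing$, so $|D_A|\le|A|\tau$. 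This gives $\gamma_A\le \sum_{(i,j)\in D_A}\frac12(\|X_i\|_4^2+\|X_j\|_4^2)$. Bounding $\max_i\|X_i\|_4^2\le(\sum\|X_i\|_4^4)^{1/2}$ yields $\gamma_A\le|A|\tau(\sum\|X_i\|_4^4)^{1/2}\le\sigma^2/500$, using the second condition in (\ref{eq-t2-01}) together with $\tau^{1/2}|A|^{1/2}\kappa^{1/2}\ge$ the needed factor.
\item For $\e S_A^2$: I would bound it directly by $\e S_A^2\le\sum_{i}\sum_{j\in A_i}|\e X_iX_j|\le\kappa\sum\|X_i\|_2^2=\lambda\sigma^2$. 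An alternative would be $\|S-S_A\|_2$-type estimates giving $\e S_A^2\le 1.1\sigma^2$. However, $(\e S_A^2)^2\le\lambda\sigma^2\cdot 1.1\sigma^2$, so combining the two yields $(\e S_A^2)^2\le 1.21\lambda\sigma^4$, which suffices.
\end{enumerate}

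The term $S-S_A=\sum_{i\in N_A}X_i$ has at most $|A|\kappa$ summands. Its $L_2$ norm is controlled via Hölder as in (\ref{eq-t1-04}) by $(|A|^2\kappa^2\sum\|X_i\|_4^3)^{1/3}\le 0.13\sigma$, using the first condition in (\ref{eq-t2-01}). Collecting the pieces gives a constant below 13.

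The two corollaries (\ref{eq-t2-02}) follow from the same scheme. For $\e S^4$, take $A=\varnothing$ and $\xi_A\equiv1$, so $N_A=\varnothing$ and $S_A=S$. For $\sum_i Y_i=\sum_j |N_j|X_j$, write it as $\sum_j c_jX_j$ with $|c_j|\le\kappa$. Repeating the argument with the weighted variables $c_jX_j$ multiplies every fourth-order quantity by at most $\kappa^4$, which gives the factor $\kappa^4$.

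The main obstacle is the combinatorial counting needed to bound $\gamma$ by $\kappa(\kappa^2+\tau)\sum\|X_i\|_4^4$. The factor $\tau$ enters through the sum over $k\in A_{ij}$, which must be reversed into a sum over pairs $(i,j)\in D_k$. I would try first to treat the terms $l\in N_k$ and $l\in A_k$ symmetrically via Young's inequality, assigning the fourth power to whichever index has the smallest multiplicity.
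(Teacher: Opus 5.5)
Your opening identity is false, and the rest of the argument depends on it. The equality $S_A^2=\sum_{i\in N_A^c}X_iY_{i,A}$ holds only after taking expectations. Pointwise, $S_A^2=\sum_{i\in N_A^c}\sum_{j\in N_A^c}X_iX_j$. Your ``middle term'' $\sum_{i\in N_A^c}\sum_{j\in N_A^c\setminus A_i}X_iX_j$ is a \emph{mean-zero} random variable (by (LD1) each summand is a product of two independent centred factors), not the zero random variable. In fact it carries essentially all of the fluctuation of $S_A^2$. Lemma~\ref{lem-XiYi-moment} controls only the local part $T_A$, which is the concentrated piece.

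A concrete check shows the resulting bound is wrong. Let $X_i=\epsilon_i/\sqrt n$ with independent Rademacher signs and $A_i=A_{ii}=\{i\}$, so $\kappa=\tau=\lambda=\sigma=1$. Take $A=\{1\}$ and $\xi_A\equiv1$. Then (\ref{eq-t2-01}) holds for $n\ge 10^6$, and $T_A\equiv 0$. Your decomposition would therefore force $S_A^2\equiv 1-1/n$ and give $\e S_A^4\le 2(1-1/n)^2$. In fact $\e S_A^4=3(1-1/n)^2-2(n-1)/n^2\to 3$.

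What is missing is an argument that handles the non-local cross terms. The paper uses a Stein-type identity: for $i\in N_A^c$, $X_i$ is independent of $(\xi_A,S_A-Y_{i,A})$, so
\[
\e\{\xi_A^pS_A^4\}=\sum_{i\in N_A^c}\e\bigl\{\xi_A^pX_i\bigl(S_A^3-(S_A-Y_{i,A})^3\bigr)\bigr\}.
\]
Its leading part is $3\sum_{i\in N_A^c}\sum_{j\in A_i\cap N_A^c}\e\{\xi_A^pX_iX_jS_A^2\}$, which is then decoupled with (LD2):
\begin{itemize}
\item Write $S_A=Y_{ij}+(S_A-Y_{ij})$ with $Y_{ij}=\sum_{k\in A_{ij}\cap N_A^c}X_k$. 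The $Y_{ij}^2$ part is small by (\ref{eq-t2-01}).
\item For $(i,j)\in D_{A,2}$, the pair $(X_i,X_j)$ is independent of $(\xi_A,S_A-Y_{ij})$. Lemma~\ref{lem-second-moment-for-S} then gives a contribution of order $\sum_i\sum_{j\in A_i}\e|X_iX_j|\cdot\sigma^2\e\xi_A^p\le\lambda\sigma^4\e\xi_A^p$.
\item The pairs in $D_{A,3}$ need the further case analysis ($R_1,R_2$) carried out in the paper.
\end{itemize}
This term is precisely the Gaussian-type $3\sigma^4$ contribution your decomposition discarded, and it is the source of the factor $\lambda$ in $13\lambda\sigma^4$.

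Two smaller issues. First, for $\e S^4$ you cannot take $A=\varnothing$. Then (\ref{eq-t2-01}) is vacuous and the inequality fails in general (e.g.\ $n=1$ with $\e X_1^4\gg\sigma^4$); you must rerun the argument for $S$ with the conditions at $|A|=1$. Second, your bound $\gamma_A\le|A|\tau(\sum_i\|X_i\|_4^4)^{1/2}$ is not controlled by (\ref{eq-t2-01}) unless $|A|\tau\le\kappa$. Use Cauchy--Schwarz over $D_A$ as in (\ref{eq-tt-9.2}) instead.
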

\begin{proof} We first prove (\ref{eq-t2-03}). Let $Y_{i,A}=\sum_{l \in A_i \cap N_{A}^{c}} X_{l}$, observe that
	\begin{align}\label{eq-t2-11}
		\e\{\xi_{A}^{p} S_{A}^{4}\}&=\sum_{i\in N_{A}^{c}}\e \big\{\xi_{A}^{p}X_{i}\big(S_{A}^{3}-(S_{A}-Y_{i, A})^{3}\big)\}=H_{1}-H_{2}+H_{3},
	\end{align}
    where 
    \begin{align*}
        H_{1}&=3\sum_{i\in N_{A}^{c}}\e \big\{\xi_{A}^{p}X_{i}Y_{i, A}S_{A}^{2}\big\},\ H_{2}=3\sum_{i\in N_{A}^{c}}\e \big\{ \xi_{A}^{p}X_{i}Y_{i, A}^{2}S_{A}\big\},\ H_{3}=\sum_{i\in N_{A}^{c}}\e \big\{\xi_{A}^{p}X_{i}Y_{i, A}^{3}\big\}.
    \end{align*}
 For $H_{1}$, let $Y_{ij}=\sum_{k\in A_{ij}\cap N_{A}^{c}}X_{k}$. Using the basic inequality $(a+b)^{2}\leq  2a^{2}+2b^{2}$, we obtain 
 	\begin{align*}
 	|H_{1}|&=3\Big|\sum_{i\in N_{A}^{c}}\sum_{j\in N_{A}^{c}\cap A_{i}} \e\big\{\xi_{A}^{p}X_{i}X_{j}S_{A}^{2}\big\}\Big|\leq H_{11}+H_{12},
 \end{align*}
 where 
 \[
 	H_{11}=6\sum_{i\in N_{A}^{c}}\sum_{j\in N_{A}^{c}\cap A_{i}} \e |\xi_{A}^{p}X_{i}X_{j}Y_{ij}^{2}|,\quad H_{12}=6\sum_{i\in N_{A}^{c}}\sum_{j\in N_{A}^{c}\cap A_{i}} \e |\xi_{A}^{p}X_{i}X_{j}(S_{A}-Y_{ij})^{2}|.
 \]
  For $H_{11}$, by Young's inequality, we have
 \begin{align}\label{eq-t2-15}
 \begin{aligned}
      	|H_{11}|&\leq 6\kappa\sum_{i\in N_{A}^{c}}\sum_{j\in N_{A}^{c}\cap A_{i}}\sum_{k\in N_{A}^{c}\cap A_{ij}} \e |\xi_{A}^{p}X_{i}X_{j}X_{k}^{2}|\\
 	&\leq 6\kappa\sum_{i\in N_{A}^{c}}\sum_{j\in N_{A}^{c}\cap A_{i}}\sum_{k\in N_{A}^{c}\cap A_{ij}}  \Big(\frac{\e\{\xi_{A}^{p}X_{i}^{4}\}}{4}+\frac{\e\{ \xi_{A}^{p}X_{j}^{4}\}}{4}+\frac{\e\{ \xi_{A}^{p}X_{k}^{4}\}}{2}\Big)\\
 	&\leq 3\e\xi_{A}^{p}\cdot(\kappa^{3}+\kappa\tau)\sum_{i\in [n]}\e|X_{i}|^{4}\\
 	&\leq 0.1\sigma^{4}\e\{\xi_{A}^{p}\}.
 \end{aligned}
 \end{align}
 For $H_{12}$, noting that  $D_{A,1}=D_{A,2}\cup D_{A,3}$, so we can further decompose 
$H_{12}$ as follows:
 \begin{align*}
  H_{12}&=6H_{121}+6H_{122},
 \end{align*}
 where 
 \begin{align*}
     H_{121}=\sum_{(i,j)\in D_{A,2}} \e \big|\xi_{A}^{p}X_{i}X_{j}(S_{A}-Y_{ij})^{2}\big|,\qquad H_{122}=\sum_{(i,j)\in D_{A,3}} \e \big|\xi_{A}^{p}X_{i}X_{j}(S_{A}-Y_{ij})^{2}\big|.
 \end{align*}
 For $H_{121}$, with a similar argument as in the proof of  (\ref{eq-i-01}), we have
 \begin{align}\label{eq-tt-09}
 \begin{aligned}
      	&\e\big|\xi_{A}^{p}(S_{A}-Y_{ij})^{2}\big|\\
 	&\quad=\sum_{(k,l)\in D_{A,1}}\e\big\{ \xi_{A}^{p}X_{k}X_{l}\mathbf{1}(k\in A_{ij}^{c})\mathbf{1}(l\in A_{ij}^{c})\big\}\\
 	&\quad=\sum_{(k,l)\in D_{A,2}}\e\big\{ \xi_{A}^{p}X_{k}X_{l}\mathbf{1}(k\in A_{ij}^{c})\mathbf{1}(l\in A_{ij}^{c})\big\}\\
 	&\quad\quad+\sum_{(k,l)\in D_{A,3}}\e\big\{ \xi_{A}^{p}X_{k}X_{l}\mathbf{1}(k\in A_{ij}^{c})\mathbf{1}(l\in A_{ij}^{c})\big\}\\
 	&\quad\leq \e\{\xi_{A}^{p}\}\cdot \Big(\e\{S_{A}^{2}\}+2 \sum_{k\in A_{ij}}\sum_{l\in A_{k}\cup N_{k}}\e|X_{k}X_{l}|+\sum_{(k,l)\in D_{A}} \|X_{k}\|_{4}\|X_{l}\|_{4}\Big).
 \end{aligned}
 \end{align}
 Noting that $|N_{A}|\leq A\kappa$ and $|D_{A}|\leq |A|\tau$, then by (\ref{eq-t2-01}), we have
  \begin{align}\label{eq-tt-9.1}
 	\|S-S_{A}\|_{2}&\leq \sum_{k\in N_{A}}\|X_{k}\|_{2}\leq \Big(|A|^{2}\kappa^{2}\sum_{i\in [n]} \e |X_{i}|^{3}\Big)^{1/3}\leq 0.126\sigma
 \end{align}
and
 \begin{align}\label{eq-tt-9.2}
 \begin{aligned}
     	\sum_{(k,l)\in D_{A}} \|X_{k}\|_{4}\|X_{l}\|_{4}&\leq \frac{1}{2}\Big(\sum_{(k,l)\in D_{A}} (\|X_{k}\|_{4}^{2}+\|X_{l}\|_{4}^{2})^{2}   \Big)^{1/2}\cdot \Big(\sum_{(k,l)\in D_{A}} 1  \Big)^{1/2}\\
	&\leq \frac{\sqrt{2}}{2}\Big(|A|\tau\sum_{k\in [n]}\sum_{l\in A_{k}} (\|X_{k}\|_{4}^{4}+\|X_{l}\|_{4}^{4}  )\Big)^{1/2}\\
	&\leq |A|^{1/2}\tau^{1/2}\kappa^{1/2}\Big(\sum_{k\in [n]} \|X_{k}\|_{4}^{4}  \Big)^{1/2}\leq 0.01\sigma^{2}.
 \end{aligned}
\end{align}
 By (\ref{eq-t2-01}), (\ref{eq-tt-09}), (\ref{eq-tt-9.1}) and (\ref{eq-tt-9.2}), we have
 \begin{align*}
       H_{121}&=\sum_{(i,j)\in D_{A,2}} \e|X_{i}X_{j}|\cdot\e \big|\xi_{A}^{p}(S_{A}-Y_{ij})^{2}\big|\\
  &\leq 1.5\lambda\sigma^{4}\e\{ \xi_{A}^{p}\}+2\e\xi_{A}^{p}\sum_{i\in [n]}\sum_{j\in A_{i}}\sum_{k\in A_{ij}}\sum_{l\in A_{k}\cup N_{k}}\e |X_{i}X_{j}|\e |X_{k}X_{l}|\\
  &\leq 1.6\lambda\sigma^{4}\e\{\xi_{A}^{p}\},
 \end{align*}
 where 
 $
 \lambda=\kappa\sum_{i\in [n]}\|X_{i}\|_{2}^{2}/\sigma^{2}
$
and the last inequality follows from 
\begin{align*}
   &2\sum_{i\in [n]}\sum_{j\in A_{i}}\sum_{k\in A_{ij}}\sum_{l\in A_{k}\cup N_{k}}\e |X_{i}X_{j}|\e |X_{k}X_{l}|\nonumber\\
   &\quad\leq \frac{1}{2} \sum_{i\in [n]}\sum_{j\in A_{i}}\sum_{k\in A_{ij}}\sum_{l\in A_{k}\cup N_{k}} (\|X_{i}\|_{4}^{4}+\|X_{j}\|_{4}^{4}+\|X_{k}\|_{4}^{4}+\|X_{l}\|_{4}^{4})\nonumber\\
   &\quad\leq 2(\kappa^{3}+\kappa\tau)\sum_{i\in [n]} \|X_{i}\|_{4}^{4}\leq 0.1\sigma^{4}.
\end{align*}
 For $H_{122}$, we rewrite $H_{122}$ as follows:
 \begin{align}\label{eq-tt-11}
 	H_{122}&=\sum_{(i,j)\in D_{A,3}}\sum_{k\in N_{A}^{c}\cap A_{ij}^{c}}\sum_{l\in N_{A}^{c}\cap A_{ij}^{c}} \e\big\{ |\xi_{A}^{p}X_{i}X_{j}|\cdot X_{k}X_{l}\big\}=R_{1}+R_{2},
 \end{align}
 where 
 \begin{align*}
     R_{1}&=\sum_{(i,j)\in D_{A,3}}\sum_{k\in N_{A}^{c}\cap A_{ij}^{c}}\sum_{l\in N_{A}^{c}\cap A_{ij}^{c}\cap A_{k}^{c}} \e\big\{ |\xi_{A}^{p}X_{i}X_{j}|\cdot X_{k}X_{l}\big\},\nonumber\\
     R_{2}&=\sum_{(i,j)\in D_{A,3}}\sum_{k\in N_{A}^{c}\cap A_{ij}^{c}}\sum_{l\in N_{A}^{c}\cap A_{ij}^{c}\cap A_{k}} \e\big\{ |\xi_{A}^{p}X_{i}X_{j}|\cdot X_{k}X_{l}\big\}.
 \end{align*}
 For $R_{1}$, if $i\in A_{k}^{c}$ and $j\in A_{k}^{c}$, then $X_{k}$ is independent of $(\xi_{A}, X_{i}, X_{j}, X_{l})$, which further implies 
 \begin{align*}
 	R_{1}=0.
 \end{align*}
 Based on the discussion above, we have
 \begin{align*}
 	R_{1}&=\sum_{(i,j)\in D_{A,3}}\sum_{k\in N_{A}^{c}\cap A_{ij}^{c}}\sum_{l\in N_{A}^{c}\cap A_{ij}^{c}\cap A_{k}^{c}} \e\big\{ |\xi_{A}^{p}X_{i}X_{j}|\cdot X_{k}X_{l}\cdot \mathbf{1}(i\in A_{k}\ \text{or}\ j\in A_{k})\big\}\\
        &=R_{11}+R_{12}.
 \end{align*}
 where 
 \begin{align*}
     R_{11}&=\sum_{(i,j)\in D_{A,3}}\sum_{l\in N_{A}^{c}\cap A_{ij}^{c}}\sum_{k\in N_{A}^{c}\cap A_{ij}^{c}\cap N_{l}^{c}\cap A_{l}^{c}} \e\big\{ |\xi_{A}^{p}X_{i}X_{j}|\cdot X_{k}X_{l}\big\}\cdot \mathbf{1}(i\in A_{k}\ \text{or}\ j\in A_{k}),\\
     R_{12}&=\sum_{(i,j)\in D_{A,3}}\sum_{l\in N_{A}^{c}\cap A_{ij}^{c}}\sum_{k\in N_{A}^{c}\cap A_{ij}^{c}\cap N_{l}^{c}\cap A_{l}} \e\big\{ |\xi_{A}^{p}X_{i}X_{j}|\cdot X_{k}X_{l}\big\}\cdot \mathbf{1}(i\in A_{k}\ \text{or}\ j\in A_{k}).
 \end{align*}
 For $R_{11}$, if $i\in A_{l}^{c}$ and $j\in A_{l}^{c}$, then $X_{l}$ is independent of $(\xi_{A}, X_{i},X_{j},X_{k})$, which implies that $R_{11}=0$. Hence 
 \begin{align}\label{eq-tt-11.3}
 \begin{aligned}
     R_{11}
     &=\sum_{(i,j)\in D_{A,3}}\sum_{l\in N_{A}^{c}\cap A_{ij}^{c}}\sum_{k\in N_{A}^{c}\cap A_{ij}^{c}\cap N_{l}^{c}\cap A_{l}^{c}} \e\big\{ |\xi_{A}^{p}X_{i}X_{j}|\cdot X_{k}X_{l}\big\}\\
     &\qquad\qquad\qquad\qquad\qquad\times \mathbf{1}(i\in A_{k}\ \text{or}\ j\in A_{k})\cdot \mathbf{1}(i\in A_{l}\ \text{or}\ j\in A_{l})\\
 	&\leq \|\xi_{A}\|_{p}^{p}\sum_{(i,j)\in D_{A,3}}\sum_{l\in [n]}\sum_{k\in [n]}  \|X_{i}\|_{4}\|X_{j}\|\|X_{k}\|_{4}\|X_{l}\|_{4}\\
     &\qquad\qquad\qquad\qquad\qquad\times \mathbf{1}(i\in A_{k}\ \text{or}\ j\in A_{k})\cdot \mathbf{1}(i\in A_{l}\ \text{or}\ j\in A_{l})\\
 	&\leq 4\|\xi_{A}\|_{p}^{p}\sum_{(i,j)\in D_{A}}\sum_{l\in N_{i}\cup N_{j}}\sum_{k\in N_{i}\cup N_{j}}  \|X_{i}\|_{4}\|X_{j}\|_{4}\|X_{k}\|_{4}\|X_{l}\|_{4}\\ 
 	&\leq 0.05\sigma^{4}\|\xi_{A}\|_{p}^{p},
 \end{aligned}
 \end{align}
 where the last inequality follows from (\ref{eq-tt-9.2}) and the fact that  
 \[
 \sum_{k\in N_{i}\cup N_{j}}\|X_{k}\|_{4}\leq \big(4\kappa^{2}\sum_{ k\in [n]}\|X_{k}\|_{4}^{3}\big)^{1/3}\leq 0.2\sigma.
 \] For $R_{12}$, as $l\in A_{k}^{c}$, it follows from (\ref{eq-tt-9.2}) that
 \begin{align}\label{eq-tt-11.4}
 	R_{12}&\leq \|\xi_{A}\|_{p}^{p}\sum_{(i,j)\in D_{A}}\sum_{l\in [n]}\sum_{k\in  A_{l}}\|X_{i}\|_{4}\|X_{j}\|_{4}\|X_{k}\|_{2}\|X_{l}\|_{2}
 	\leq 0.05\lambda\sigma^{4}\|\xi_{A}\|_{p}^{p}.
 \end{align}
Combine (\ref{eq-tt-11.3}) and (\ref{eq-tt-11.4}), we have
\begin{align}\label{eq-tt-11.5}
	R_{1}\leq 0.1\lambda\sigma^{4}\|\xi_{A}\|_{p}^{p}.
\end{align}
For $R_{2}$, with $I_{i,j,k}$ defined by (\ref{eq-definition-of-Iijk}), we have
\begin{align}\label{eq-tt-11.6}
\begin{aligned}
    	R_{2}&\leq \sum_{(i,j)\in D_{A,3}}\sum_{(k,l)\in D_{A,1}} \e\big\{ |\xi_{A}^{p}X_{i}X_{j}|\cdot |X_{k}X_{l}| \big\}I_{k,i,j}I_{l,i,j}\\
    &=R_{21}+R_{22}+R_{23}+R_{24},
\end{aligned}
\end{align}
where 
\begin{align*}
	R_{21}&=\sum_{(i,j)\in D_{A,3}}\sum_{(k,l)\in D_{A,2}} \e\big\{ |\xi_{A}^{p}X_{i}X_{j}|\cdot |X_{k}X_{l}|\big\} \cdot I_{i,k,l}I_{j,k,l}I_{k,i,j}I_{l,i,j},\nonumber\\
	R_{22}&=\sum_{(i,j)\in D_{A,3}}\sum_{(k,l)\in D_{A,3}} \e\big\{ |\xi_{A}^{p}X_{i}X_{j}|\cdot |X_{k}X_{l}|\big\}I_{k,i,j}I_{l,i,j}\cdot I_{i,k,l}I_{j,k,l},\nonumber\\
	R_{23}&=\sum_{(i,j)\in D_{A,3}}\sum_{(k,l)\in D_{A,1}} \e\big\{ |\xi_{A}^{p}X_{i}X_{j}|\cdot |X_{k}X_{l}| \big\}I_{k,i,j}I_{l,i,j}\cdot I_{i,k,l}(1-I_{j,k,l}),\nonumber\\
	R_{24}&=\sum_{(i,j)\in D_{A,3}}\sum_{(k,l)\in D_{A,1}} \e\big\{ |\xi_{A}^{p}X_{i}X_{j}|\cdot |X_{k}X_{l}|\big\} I_{k,i,j}I_{l,i,j}\cdot (1-I_{i,k,l}).
\end{align*}
For $R_{21}$, as $A\in A_{kl}^{c}$, $i\in A_{kl}^{c}$ and $j\in A_{kl}^{c}$, then $(X_{k},X_{l})\perp\!\!\!\perp (\xi_{A}, X_{i},X_{l})$, which further implies that
\begin{align}\label{eq-tt-11.61}
	|R_{21}|&\leq \lambda\sigma^{2}\|\xi_{A}\|_{p}^{p}\sum_{(i,j)\in D_{A}}\|X_{i}\|_{4}\|X_{j}\|_{4}\leq 0.05\lambda\sigma^{4}\|\xi_{A}\|_{p}^{p}.
\end{align}
For $R_{22}$, using the fact that $D_{A,3}\subset D_{A}$ and (\ref{eq-tt-9.2}) yields 
\begin{align}\label{eq-tt-11.7}
	|R_{21}|&\leq \|\xi_{A}\|_{p}^{p}\Big(\sum_{(i,j)\in D_{A}}\|X_{i}\|_{4}\|X_{j}\|_{4}\Big)^{2}\leq 0.05\sigma^{4}\|\xi_{A}\|_{p}^{p}.
\end{align}
For $R_{23}$ and $R_{24}$, by the definition of $D_{A}$, if $l\in A_{k}$ and $j\in A_{kl}$, then $(k,l)\in D_{j}$. So by (\ref{eq-tt-9.2}), we have
\begin{align}\label{eq-tt-11.8}
\begin{aligned}
    	&R_{23}+R_{24}\\
        &\quad\leq \|\xi_{A}\|_{p}^{p}\sum_{(i,j)\in D_{A}}\|X_{i}\|_{4}\|X_{j}\|_{4}\cdot \Big( \sum_{(k,l)\in D_{i}}\|X_{k}\|_{4}\|X_{l}\|_{4}+\sum_{(k,l)\in D_{j}}\|X_{k}\|_{4}\|X_{l}\|_{4}\Big)\\
    &\quad\leq 0.1\sigma^{4}.
\end{aligned}
\end{align}
  It follows from (\ref{eq-tt-11}) and (\ref{eq-tt-11.5})  together with (\ref{eq-tt-11.6})--(\ref{eq-tt-11.8}) that
   \begin{align}\label{eq-tt-12}
  	|H_{12}|\leq 12\lambda\sigma^{4}\e\{ \xi_{A}^{p}\}.
  \end{align}
  Combining (\ref{eq-t2-15}) and (\ref{eq-tt-12}) yields
  \begin{align}\label{eq-t2-16}
  	|H_{1}|\leq 12.1\lambda\sigma^{4}\e \{\xi_{A}^{p}\}.
  \end{align}
  As for $H_{2}$ and $H_{3}$, with similar argument as that leading to  (\ref{eq-t2-16}), we have
  \begin{align}\label{eq-t2-116}
  	H_{2}+H_{3}\leq 0.9\sigma^{4}\lambda\e \{\xi_{A}^{p}\}.
  	\end{align}
  Combine (\ref{eq-t2-11}), (\ref{eq-t2-16}) and (\ref{eq-t2-116}), we obtain  (\ref{eq-t2-03}). The first inequality of (\ref{eq-t2-02}) can be obtained  by a  similar argument as above.   	For the second inequality of (\ref{eq-t2-02}), note that $$\sum_{i\in [n]}Y_{i}=\sum_{i\in [n]}\sum_{j\in  A_{i}} X_{i}=\sum_{i\in [n]}|N_{i}| X_{i}$$
  and $|N_{i}|\leq \kappa.$  With similar argument as that in the proof of (\ref{eq-t2-03}), we have
  \begin{align*}
  	\e \Big(\sum_{i\in [n]} Y_{i}\Big)^{4}\leq 13\kappa^{4}\lambda^{2}\sigma^{4}.
  \end{align*}
  \end{proof}
\subsection{Generalized  concentration inequalities}
In this subsection, we develop two refined randomized concentration inequalities under (LD1) and (LD2), which plays an important role in the proof of  main results. Throughout this subsection, for any $A\subset [n]$, we denote 
\[
S_{A}=S_{n,A}:=\sum_{i\in N_{A}^{c}} X_{i}, \quad  Y_{k,A}=\sum_{l \in A_k \cap N_{A}^{c}} X_{l},
\]
\begin{prop}\label{prop:1} Let $A \subset[n]$ and $B \subset[n]$ be two arbitrary nonempty index sets. Let $\xi_A=h\left(X_A\right) \geqslant 0, \eta_{B}=a-c \sum_{m \in B}\left|X_m\right|/\sigma$ and $\zeta_{B}=b+c \sum_{m \in B}\left|X_m\right|/\sigma$, where $a, b, c \in \mathbb{R}$ satisfy $a \leq b$ and $c \geqslant 1$. Then,
	\begin{align*}
		\mathbb{E}\left\{\xi_A \mathbf{1}\left(\eta_B \leq S_{A}/\sigma \leq \zeta_B\right)\right\} \leq 156\left\|\xi_A\right\|_{4/3}\cdot\sum_{i=0}^{7}\delta_{i},
	\end{align*}
	where
	\begin{align*}
		\delta_{0}&=\frac{b-a}{100},\quad\delta_{1}=\frac{c}{\sigma}\sum_{i\in N_{A}}\|X_{i}\|_{4}, \quad
		\delta_2 = \frac{c}{\sigma}\sum_{m \in B}\left\|X_m\right\|_4,\\
		\delta_3 &=\frac{c}{\sigma^{2}} \sum_{m \in B} \sum_{k \in  N_m}\left\|X_k\right\|_4\left\|X_m\right\|_4,\quad 	\delta_{4}=\frac{c}{\sigma^{2}} \sum_{(i,j)\in  D_{A}} \|X_{i}\|_{4}\|X_{j}\|_{4},\\
		\delta_5  &=\frac{c}{\sigma^{3}}  \sum_{i\in [n]} |A_{i}|^2\left\|X_i\right\|_{4}^3+\frac{c}{\sigma^{3}}\sum_{i\in [n]}\sum_{j\in A_{i}} |A_{i}|\left\|X_j\right\|_{4}^3,\nonumber\\
		\delta_{6}^{2} &=\frac{c^{2}}{\sigma^{4}}\sum_{i \in [n]}\sum_{j\in A_{i}}\sum_{k\in A_{ij}}\sum_{l\in A_{k}\cup N_{k}}\|X_i\|_{4}\|X_j\|_{4}\|X_{k}\|_{4}\|X_{l}\|_{4}\nonumber\\
		&\quad+\frac{c^{2}}{\sigma^{4}} \sum_{i\in [n]}\sum_{j\in A_{i}\cup N_{i}} |A_{i}|^2\left\|X_i\right\|_{4}^3\|X_{j}\|_{4}+\frac{c^{2}}{\sigma^{4}}\sum_{i\in [n]}\sum_{j\in A_{i}}\sum_{k\in A_{i}\cup N_{j}} |A_{i}|\left\|X_j\right\|_{4}^3\|X_{k}\|_{4},\nonumber\\
		\delta_{7}^{2}&=\frac{c^{2}}{\sigma^{5}} \sum_{i\in [n]} |A_{i}|^2\sum_{j\in A_{i}\cup N_{i}}\sum_{k\in N_{j}}\left\|X_i\right\|_{4}^3\|X_{j}\|_{4}\|X_{k}\|_{4}\\
        &\quad+\frac{c^{2}}{\sigma^{5}}\sum_{i\in [n]}\sum_{j\in A_{i}}\sum_{k\in A_{i}\cup N_{j}}\sum_{l\in N_{k}} |A_{i}|\left\|X_j\right\|_{4}^3\|X_{k}\|_{4}\|X_{l}\|_{4}\nonumber\\
		&\quad+\frac{c^{2}}{\sigma^{5}} \sum_{i\in [n]}\sum_{(j,k)\in D_{i}} |A_{i}|^2\left\|X_i\right\|_{4}^3\|X_{j}\|_{4}\|X_{k}\|_{4}\\
        &\quad+\frac{c^{2}}{\sigma^{5}}\sum_{i\in [n]}\sum_{j\in A_{i}}\sum_{(k,l)\in D_{j}} |A_{i}|\|X_j\|_{4}^3\|X_{k}\|_{4}\|X_{l}\|_{4}.
	\end{align*}
\end{prop}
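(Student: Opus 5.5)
We follow the Chen--Shao concentration-inequality argument via a Stein-type identity, adapted to (LD1)--(LD2) and localized away from $A$. We may assume $\sum_{i=0}^7\delta_i\le 1/156$, since otherwise the bound is trivial because $\mathbb{E}\{\xi_A\mathbf{1}(\cdot)\}\le\|\xi_A\|_{4/3}$. Put $W_A=S_A/\sigma$ and $\bar\xi_A=\xi_A/\|\xi_A\|_{4/3}$. Introduce the piecewise linear function $f$ with $f'(w)=\mathbf{1}(\eta_B-\epsilon\le w\le\zeta_B+\epsilon)$, centred at $(\eta_B+\zeta_B)/2$, so that $|f|\le(\zeta_B-\eta_B)/2+\epsilon$. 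Here $\epsilon$ is a multiple of $\delta_5$, the third-moment quantity. Write $\hat K_i(t)=X_i\{\mathbf{1}(-Y_{i,A}\le t\le0)-\mathbf{1}(0<t\le -Y_{i,A})\}$ with $Y_{i,A}=\sum_{l\in A_i\cap N_A^c}X_l$, and start from
\[
\mathbb{E}\{\bar\xi_A W_A f(W_A)\}=\frac1\sigma\sum_{i\in N_A^c}\mathbb{E}\{\bar\xi_A X_i(f(W_A)-f(W_A-Y_{i,A}/\sigma))\}+\frac1\sigma\sum_{i\in N_A^c}\mathbb{E}\{\bar\xi_AX_if(W_A-Y_{i,A}/\sigma)\}.
\]
For $i\in N_A^c$, $X_i$ is independent of $(\xi_A,S_A-Y_{i,A})$ by (LD1). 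However, $\eta_B,\zeta_B$ depend on $X_B$, so the second sum does not vanish. We bound it by replacing $\eta_B,\zeta_B$ with versions omitting $m\in A_i$, together with the Lipschitz dependence of $f$ on $\eta_B,\zeta_B$ (constant $c$). This produces $\delta_2$ and $\delta_3$ via the sums $\sum_{m\in B}\sum_{k\in N_m}$.

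For the lower bound, the first term equals $\mathbb{E}\int \bar\xi_A f'(W_A+t/\sigma)\hat K(t)\,dt$ with $\hat K=\sum_{i\in N_A^c}\hat K_i$. On the event $\{\eta_B\le W_A\le\zeta_B\}$ the integrand is $\ge \mathbf{1}(\cdot)\hat K(t)$ for $|t|\le\epsilon\sigma$. This gives $\ge\mathbb{E}\{\bar\xi_A\mathbf{1}(\eta_B\le W_A\le\zeta_B)\sum_i X_iY_{i,A}\mathbf{1}(|Y_{i,A}|\le\epsilon\sigma)\}/\sigma^2$. We then write $\sum_iX_iY_{i,A}\mathbf 1(\cdot)=\sigma^2-\big(\sigma^2-\mathbb{E}S_A^2\big)-\big(\sum X_iY_{i,A}-\mathbb{E}\sum X_iY_{i,A}\big)-\text{truncation error}$. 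The term $\sigma^2-\mathbb{E}S_A^2$ is controlled by $\delta_1$, using $\|S-S_A\|_2\le\sum_{k\in N_A}\|X_k\|_2$. The centered fluctuation is handled by Cauchy--Schwarz with weight $\bar\xi_A$ (Hölder in the form $\|\bar\xi_A\|_{4/3}$ after splitting $\bar\xi_A=\bar\xi_A^{1/2}\bar\xi_A^{1/2}$, or using $\mathbb E\{\bar\xi_A^{p}|\cdot|^2\}$ with a suitable $p$). Lemma~\ref{lem-XiYi-moment} applies with $\gamma_A$ giving $\delta_4$ and $\gamma$ giving the first part of $\delta_6$. The truncation error $\sum|X_iY_{i,A}|\mathbf{1}(|Y_{i,A}|>\epsilon\sigma)\le\sum|X_i||Y_{i,A}|^2/(\epsilon\sigma)$ produces the $|A_i|^2\|X_i\|^3_4$ and $|A_i|\|X_j\|_4^3$ terms of $\delta_5$ after expectation. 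Its correlation with $\bar\xi_A$ and the indicator requires one more decoupling step, which yields the remaining parts of $\delta_6$ and $\delta_7$.

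For the upper bound, $|\mathbb{E}\{\bar\xi_AW_Af(W_A)\}|\le((b-a)/2+c\sum_{m\in B}\|X_m\|/\sigma+\epsilon)\,\mathbb{E}\{\bar\xi_A|W_A|\}$. Lemma~\ref{lem-second-moment-for-S} with $p=4/3$-type Hölder gives $\mathbb{E}\{\bar\xi_A|W_A|\}\le C$, using $\mathbb{E}S_A^2\le 2\sigma^2$ (from $\delta_1$ small) and $\delta_4$ small. The random width $c\sum_{m\in B}|X_m|$ is not independent of $W_A$; we handle this by splitting $S_A$ as $S_A-\sum_{k\in N_m}X_k$ plus a remainder, giving $\delta_2$ and $\delta_3$ again. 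Combining the two bounds yields $\mathbb{E}\{\bar\xi_A\mathbf 1(\eta_B\le W_A\le\zeta_B)\}(1-\text{small})\le C\sum\delta_i$, which gives the stated inequality with constant $156$ after bookkeeping.

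The main obstacle is the truncation/decoupling step: controlling $\mathbb{E}\{\bar\xi_A\mathbf{1}(\eta_B\le W_A\le\zeta_B)|X_i|Y_{i,A}^2\}$ without an a priori concentration bound. We would handle it recursively, as in \cite{Chen-recursive}. Condition on the block $A\cup A_i\cup\{i\}$ and its neighbours: remove $\sum_{j\in A_i\cup N_i}X_j$ and the pairs in $D_i$ from $S_A$, which enlarges $B$ and the index set $A$. We then apply the proposition itself, or an induction hypothesis on the concentration probability (taking the supremum over all admissible $A,B,a,b,c$ at a bootstrap level), to the weight $\xi'=\xi_A|X_i|Y_{i,A}^2$. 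The resulting factor $\|\xi'\|_{4/3}\lesssim\|\xi_A\|_{4/3}\|X_i\|_4^3$ times the enlarged $\delta_1,\delta_2,\delta_4$ produces exactly the $\sigma^{-5}$ terms in $\delta_7^2$. Smallness of $\sum\delta_i$ closes the recursion.
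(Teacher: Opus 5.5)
Your overall scheme matches the paper's: the Stein identity over $k\in N_A^c$, replacing $\eta_B,\zeta_B$ by versions without $A_k$ to get $\delta_3$, and a bootstrap over the supremum $K$ with enlarged $A$ and $B$. The gap is in the lower bound for the first term.

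To pass from $\mathbb{E}\int\bar\xi_A f'(W_A+t/\sigma)\hat K(t)\,dt$ to $\sigma^{-2}\mathbb{E}\{\bar\xi_A\mathbf{1}(\eta_B\le W_A\le\zeta_B)\sum_i X_iY_{i,A}\mathbf{1}(|Y_{i,A}|\le\epsilon\sigma)\}$, you make two moves. You discard the contributions from $|t|>\epsilon\sigma$ and from the complement of the event. You also replace $\int_{|t|\le\epsilon\sigma}\hat K_i(t)\,dt=X_i\,\mathrm{sgn}(Y_{i,A})\min\{|Y_{i,A}|,\epsilon\sigma\}$ by $X_iY_{i,A}\mathbf{1}(|Y_{i,A}|\le\epsilon\sigma)$. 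Both moves require $\hat K(t)\ge0$. That holds for independent summands, where $Y_i=X_i$, which is why the step is standard in the classical Chen--Shao inequality. Here, however, $\hat K_i(t)$ has the sign of $X_iY_{i,A}$, which is arbitrary under (LD1)--(LD2), and the negative part of $\hat K$ is in general as large as the positive part.

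The step cannot be rescued while $f'$ is an indicator. Writing $f'(W_A+t/\sigma)=f'(W_A)+[f'(W_A+t/\sigma)-f'(W_A)]$ leaves a \emph{second}-order error, $\sigma^{-2}\sum_i\mathbb{E}\{\bar\xi_A|X_iY_{i,A}|\mathbf{1}(W_A\mbox{ lies within }|Y_{i,A}|/\sigma\mbox{ of an endpoint})\}$. The endpoint window still carries $c\sum_{m\in B}|X_m|/\sigma$. So the recursion bounds this error only by $K\sum_j\delta_j$ times a factor of order $\sigma^{-2}\sum_i\|X_i\|_4\|Y_{i,A}\|_4\ge\mathbb{E}S_A^2/\sigma^2$. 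That factor is not small, so the inequality for $K$ does not close.

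The missing idea is to smooth $f'$. The paper uses the $C^1$ function $f_{\eta_B,\zeta_B,\alpha}$, whose derivative ramps linearly over windows of width $\alpha=100(\delta_5+\delta_6+\delta_7)$, and splits the first term into three pieces:
\begin{itemize}
\item $H_1=\sigma^{-2}\mathbb{E}\{\xi_Af'(S_A/\sigma)\}\int M(t)\,dt$, where $\int M(t)\,dt=\mathbb{E}S_A^2\ge0.98\sigma^2$ is deterministic. No sign condition on $\widehat M(t)$ (your $\hat K(t)$) is ever needed.
\item $H_3$, the fluctuation $\int(\widehat M(t)-M(t))\,dt$, handled by Lemma~\ref{lem-XiYi-moment}.
\item $H_2$, which uses $|f'(w+t)-f'(w)|\le\alpha^{-1}\int_{t\wedge0}^{t\vee0}\mathbf{1}(w+s\in[\eta_B-\alpha,\eta_B]\cup[\zeta_B,\zeta_B+\alpha])\,ds$. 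This gains a factor $|t|/\alpha$ and makes $H_2$ \emph{third} order, namely $(\alpha\sigma^3)^{-1}\sum_k\mathbb{E}\{\xi_A|X_k|Y_{k,A}^2\mathbf{1}(\cdot)\}$.
\end{itemize}
After Young's inequality, the recursion is applied with $A\to A\cup\{k\}$, weight $\xi_A|X_k|^3$, $B\to B\cup A_k\cup N_k$ and $c\to c+2$. By independence, $\|\xi_AX_k^3\|_{4/3}=\|\xi_A\|_{4/3}\|X_k\|_4^3$. The resulting factors satisfy $\delta_5/\alpha\le1/100$ and $(\delta_6^2+\delta_7^2)/\alpha\le(\delta_6+\delta_7)/100$, which give $0.98K\le59+0.6K$.

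This also shows that your choice $\epsilon\asymp\delta_5$ would be too small even after the fix. The recursion produces terms of type $\delta_7^2/\epsilon$, so the window must be at least of order $\delta_6+\delta_7$.
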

\begin{proof} 
We prove this proposition by a recursive argument, in the spirit of \cite{ChenandShao}, but we apply the recursion to a different object. To this end, let $K$ be defined as
	\begin{align*}
		K=\sup \frac{\e\{\xi_{A}\mathbf{1}(\eta_{B}\leq S_{A}/\sigma\leq \zeta_{B})\}}{\left\|\xi_A\right\|_{4/3}\sum_{i=0}^{7}\delta_{i}},
	\end{align*}
	where the supremum runs over $A,B,a,b,c$ as defined above. Without loss of generality, we assume that $\max_{0\leq i\leq 7}\delta_{i}<1/100$, otherwise the inequality is trivial.
  For each $k\in N_{A}^{c}$, let
    \begin{align}\label{eq-definition-of-M}
\begin{aligned}
    	\widehat{M}_k(t)&=X_k \{\mathbf{1}(- Y_{k,A} \leq t \leq
	0)-\mathbf{1}(0\leq t<-Y_{k,A})\},\\
	\widehat{M}(t)&=\sum_{k \in  N_A^c} \widehat{M}_k(t),\quad  M(t)=\e\big\{\widehat{M}(t)\big\}.
\end{aligned}
\end{align}
	To prove the concentration inequality, we first introduce a function. For any $\eta<\zeta$ and  $\alpha>0$, let
\begin{align}\label{eq-fabw}
	f_{\eta, \zeta,\alpha}(w)= \begin{cases}-\frac{\zeta-\eta+\alpha}{2}, & \text { for } w \leqslant \eta-\alpha, \\ \frac{1}{2 \alpha}(w-\eta+\alpha)^2-\frac{\zeta-\eta+\alpha}{2}, & \text { for } \eta-\alpha<w \leqslant \eta, \\ w-\frac{\eta+\zeta}{2}, & \text { for } \eta<w \leqslant \zeta, \\ -\frac{1}{2 \alpha}(w-\zeta-\alpha)^2+\frac{\zeta-\eta+\alpha}{2}, & \text { for } \zeta<w \leqslant \zeta+\alpha, \\ \frac{\zeta-\eta+\alpha}{2}, & \text { for } w>\zeta+\alpha.\end{cases}
\end{align}
Then, $f_{\eta, \zeta,\alpha}((\eta+\zeta) / 2)=0$ and the derivative $f_{\eta, \zeta,\alpha}^{\prime}$ is continuous and can be expressed as
$$
f_{\eta, \zeta,\alpha}^{\prime}(w)= \begin{cases}1, & \text { for } \eta \leqslant w \leqslant \zeta, \\ 0, & \text { for } w \leqslant \eta-\alpha \text { or } w \geqslant \zeta+\alpha, \\ \text {linear},\ & \text { for } \eta-\alpha \leqslant w \leqslant \eta \text { or } \zeta \leqslant w \leqslant \zeta+\alpha .\end{cases}
$$
    Let $f_{\eta_{B}, \zeta_{B},\alpha}(w)$ be defined by (\ref{eq-fabw}) with
	$$
	\alpha=100(\delta_5+\delta_6+\delta_{7}).
	$$
    For simplicity of notation, in what follows we omit the dependence on $\alpha$ and write
$f_{\eta_B, \zeta_B}(w) := f_{\eta_B, \zeta_B, \alpha}(w).$
	For any $k \in N_A^c$, define
	$$
	S_{A}^{(k)}=S_{A}-Y_{k,A}, \quad\eta_B^{(k)}=a-\frac{c}{\sigma}\sum_{m \in B \backslash A_k}|X_m|, \quad \zeta_B^{(k)}=b+\frac{c}{\sigma}\sum_{m \in B \backslash A_k}|X_m| .
	$$	Then, it follows that $X_k$ is independent of $(\xi_A, \eta_B^{(k)}, \zeta_B^{(k)}, S_{A}^{(k)})$ if $k\in N_{A}^{c}$, which further implies
	\begin{align}\label{eq-p-02}
        		&\frac{1}{\sigma}\mathbb{E}\big\{\xi_A S_{A} f_{\eta_B, \zeta_B}(S_{A}/\sigma)\big\}\nonumber\\ &\quad= \sum_{k \in  N_A^c} \frac{1}{\sigma}\mathbb{E}\big\{\xi_A X_k f_{\eta_B, \zeta_B}(S_{A}/\sigma)\big\} \nonumber\\
		 &\quad= \sum_{k \in  N_A^c} \frac{1}{\sigma}\mathbb{E}\big\{\xi_A X_k\big(f_{\eta_{B}, \zeta_B}(S_{A}/\sigma)-f_{\eta_B, \zeta_B}(S_{A}^{(k)}/\sigma)\big)\big\}\\
		&\qquad +\sum_{k \in  N_A^c} \frac{1}{\sigma}\mathbb{E}\big\{\xi_A X_k\big(f_{\eta_B, \zeta_B}(S_{A}^{(k)}/\sigma)-f_{\eta_{B}^{(k)}, \zeta_B^{(k)}}(S_{A}^{(k)}/\sigma)\big)\big\} \nonumber\\
		 &\quad= H_1+H_2+H_3+H_4,\nonumber
	\end{align}
	where
	\begin{align*}
		H_1&=\frac{1}{\sigma^{2}}\mathbb{E}\Big\{\int_{\mathbb{R}} \xi_A f_{\eta_B, \zeta_B}^{\prime}(S_{A}/\sigma) M(t) d t\Big\}, \\
		H_2&=\sum_{k \in  N_A^c}\frac{1}{\sigma^{2}}\mathbb{E}\Big\{\int_{\mathbb{R}} \xi_A[f_{\eta_B, \zeta_B}^{\prime}((S_{A}+t)/\sigma)-f_{\eta_B, \zeta_B}^{\prime}(S_{A}/\sigma)] \widehat{M} _{k}(t) d t\Big\}, \\
		H_3&=\frac{1}{\sigma^{2}}\mathbb{E}\Big\{\xi_A f_{\eta_B, \zeta_B}^{\prime}(S_{A}/\sigma) \int_{\mathbb{R}}[\widehat{M}(t)-M(t)] d t\Big\}, \\
		H_4&=\sum_{k \in  N_A^c} \frac{1}{\sigma}\mathbb{E}\big\{\xi_A X_k[f_{\eta_B, \zeta_B}(S_{A}^{(k)}/\sigma)-f_{\eta_B^{(k)}, \zeta_B^{(k)}}(S_{A}^{(k)}/\sigma)]\big\} .
	\end{align*}
	Next, we will provide the upper bounds of $|\mathbb{E}\{\xi_A S_{A} f_{\eta_B, \zeta_B}(S_{A}/\sigma)\}|/\sigma, H_2, H_3$ and $H_4$, and the lower bound of $H_1$. We remark that we  use the recursive argument when we prove the upper bound of $H_2$. \\ \hspace*{\fill} \\
	\noindent
	{\it (i) Upper bound of $|\mathbb{E}\{\xi_A S_{A} f_{\eta_B, \zeta_B}(S_{A}/\sigma)\}/\sigma|$.} Note that $\|f_{\eta_B, \zeta_B}\|_{\infty}\leq (\zeta_B-\eta_B+\alpha)/2\leq (b-a+\alpha)/2+\sum_{m\in B}|X_{m}|/\sigma$, then
	\begin{align}\label{eq-p-03}
    \begin{aligned}
        		&\frac{1}{\sigma}|\mathbb{E}\{\xi_A S_{A} f_{\eta_B, \zeta_B}(S_{A}/\sigma)\}|\\
		&\quad\leq 0.5(b-a+\alpha)\sigma^{-1}\e|\xi_{A} S_{A}|+c\sigma^{-2}\sum_{m\in B}\e|\xi_{A}S_{A} X_{m}|\\
		&\quad\leq 0.5(b-a+\alpha)\sigma^{-1}\e|\xi_{A} S_{A}|+c\sigma^{-2}\sum_{m\in B}\|\xi_{A}S_{A}\|_{4/3} \|X_{m}\|_{4}.
    \end{aligned}
	\end{align}
	Noting that 
	$
	\|S-S_{A}\|_{2}\leq \sum_{i\in N_{A}}\|X_{i}\|_{2}\leq  \sigma\cdot \delta_{1}\leq  0.01\sigma,
	$
	then
	\begin{align}\label{eq-p-04}
		0.99\sigma\leq \|S_{A}\|_{2}\leq 1.01\sigma.
	\end{align}
By (\ref{eq-p-04}), Lemma \ref{lem-second-moment-for-S}, and the  condition $\delta_{4}\leq 0.01$, we have for any $p\geq 0$, $\e\{\xi_{A}^{p}S_{A}^{2}\}\leq \e\{\xi_{A}^{p}\}\cdot(\|S_{A}\|_{2}^{2}+2\sigma^{2}\delta_{4})\leq 1.1\e\{\xi_{A}^{p}\}\sigma^{2}$, and then
	\begin{align}\label{eq-p-06}
		\|\xi_{A} S_{A}\|_{4/3}\leq \big(\e\{\xi_{A}^{4/3}\}\big)^{1/4}\big( \e\{\xi_{A}^{4/3}S_{A}^{2}\}  \big)^{1/2}\leq 1.1\sigma\|\xi_{A}\|_{4/3}.
	\end{align}
	Combining (\ref{eq-p-03}) and (\ref{eq-p-06}) yields
	\begin{align}\label{eq-p-07}
		\frac{1}{\sigma}\big|\mathbb{E}\{\xi_A S_{A} f_{\eta_B, \zeta_B}(S_{A}/\sigma)\}\big|\leq 0.55(b-a+\alpha)\|\xi_{A}\|_{4/3}+1.1\|\xi_{A}\|_{4/3}\delta_{2}.
	\end{align} 
	\noindent
	{\it (ii) Lower bound of $H_{1}$.} It follows from the definition of $M(t)$ that
	\begin{align}\label{eq-p-7.1}
		\int_{\mathbb{R}} M(t) d t & = \sum_{k\in N_{A}^{c}}\sum_{l\in A_{k}\cap N_{A}^{c}}\e \{X_{k}X_{l}\}=\e\{S_{A}^{2}\}.
	\end{align}
	Note that $f'_{\eta_{B},\zeta_{B}}(S_{A}/\sigma)\geq \mathbf{1}(\eta_{B}\leq S_{A}/\sigma\leq \zeta_{B})$, together with (\ref{eq-p-04})  yields 
	\begin{align}\label{eq-p-08}
		H_{1}\geq 0.98\e \{\xi_{A}\mathbf{1}(\eta_{B}\leq S_{A}/\sigma\leq \zeta_{B})\}.
	\end{align}
	{\it (iii) Upper bound of $|H_{3}|$.}  For $H_3$, using the fact $|f_{\xi_{B},\zeta_{B}}^{\prime}(S_{A}/\sigma)|\leq 1$ and  Lemma \ref{lem-XiYi-moment}, we have
	\begin{align}\label{eq-p-09}
        \left|H_3\right|
		& \leq \frac{1}{\sigma^{2}}\mathbb{E}\Big\{\xi_A \Big|\int_{\mathbb{R}}[\widehat{M}(t)-M(t)] d t\Big|\Big\} \nonumber\\
		& = \frac{1}{\sigma^{2}}\mathbb{E}\Big\{\xi_A\Big|\sum_{k \in  N_A^c} \sum_{l \in A_k \cap N_{A}^{c} }\left(X_k X_l-\mathbb{E}\{X_k X_l\}\right)\Big|\Big\} \nonumber\\
		&\leq \frac{1}{\sigma^{2}}\big(\e \{\xi_{A}^{p}\}\big)^{1/2}\cdot \Big(\mathbb{E}\Big\{\xi_A\Big|\sum_{k \in  N_A^c} \sum_{l \in A_k \cap N_{A}^{c} }\left(X_k X_l-\mathbb{E}\{X_k X_l\}\right)\Big|^{2}\Big\}\Big)^{1/2}\\
		&\leq \frac{2}{\sigma^{2}}\|\xi_{A}\|_{4/3}\sum_{(i,j)\in  D_{A}} \|X_{i}\|_{4}\|X_{j}\|_{4}\nonumber\\
		&\quad+\frac{4}{\sigma^{2}}\|\xi_{A}\|_{4/3}
		\Big(\sum_{i \in [n]}\sum_{j\in A_{i}}\sum_{k\in A_{ij}}\sum_{l\in N_{k}\cup A_{k}}\|X_i\|_{4}\|X_j\|_{4}\|X_{k}\|_{4}\|X_{l}\|_{4}\Big)^{1/2}\nonumber\\
	&\leq 2\|\xi_{A}\|_{4/3}\delta_{4}+4\|\xi_{A}\|_{4/3}\delta_{6}.	\nonumber
	\end{align}
	{\it (iv) Upper bound of $H_{4}.$} For $H_{4}$, observe that for any $\eta, \eta', \zeta,\zeta'\in \R$,
	\begin{align}\label{eq-p-10}
		\sup_{w\in \R}|f_{\eta,\zeta}(w)-f_{\eta',\zeta'}(w)|\leq \frac{|\eta- \eta'|}{2}+\frac{|\zeta-\zeta'|}{2},
	\end{align}
	which further implies
	\begin{align}\label{eq-p-11}
    	\begin{aligned}
		|H_{4}|&\leq \frac{c}{\sigma^{2}}\sum_{k\in N_{A}^{c}}\sum_{m\in B\cap A_{k}}\e |\xi_{A}X_{k}X_{m}|\\
		&\leq \frac{c}{\sigma^{2}}\sum_{k\in N_{A}^{c}}\sum_{m\in B\cap A_{k}}\|\xi_{A}X_{k}\|_{4/3}\|X_{m}\|_{4}\\
		&\leq \frac{c}{\sigma^{2}}\|\xi_{A}\|_{4/3}\cdot \sum_{m\in B}\sum_{k\in N_{m}}\|X_{m}\|_{4}\|X_{k}\|_{4}\\
		&\leq \|\xi_{A}\|_{4/3}\cdot \delta_{3}.
        \end{aligned}
	\end{align}
	\noindent
	{\it (v) Upper bound of $H_{2}$.} Now, it suffices to bound $H_2$, for which we use the recursive argument. Note that
	$$
	\begin{aligned}
		\left|H_2\right| &\leq  \frac{1}{\sigma^{2}}\sum_{k \in  N_A^c} \mathbb{E}\Big\{\int_{\mathbb{R}} \xi_A|f_{\eta_B, \zeta_B}^{\prime}((S_{A}+t)/\sigma)-f_{\eta_B, \zeta_B}^{\prime}(S_{A}/\sigma)|\cdot| \widehat{M}_k(t)| d t\Big\} \\
		&:=H_{21}+H_{22},
	\end{aligned}
	$$
	where
	$$
	\begin{aligned}
		& H_{21}=\frac{1}{\sigma^{3}}\sum_{k \in  N_A^c} \mathbb{E}\left\{\frac{\xi_A}{\alpha} \int_{\mathbb{R}}\int_{t\wedge 0}^{t\vee 0} \mathbf{1}\left(\eta_B-\alpha \leq (S_{A}+s)/\sigma \leq \eta_B\right) \cdot |\widehat{M}_{k}(t)| d s d t\right\}, \\
		& H_{22}=\frac{1}{\sigma^{3}}\sum_{k \in  N_A^c} \mathbb{E}\left\{\frac{\xi_A}{\alpha} \int_{\mathbb{R}} \int_{t\wedge 0}^{t\vee 0}  \mathbf{1}\left(\zeta_B \leq (S_{A}+s)/\sigma \leq \zeta_B+\alpha\right)\cdot |\widehat{M}_k(t)| d s d t\right\}.
	\end{aligned}
	$$
	For the indicator function in $H_{21}$, as $|s|\leq |t| \leq\left|Y_{k,A}\right| \leq \sum_{l \in  A_k\cap N_{A}^{c} }\left|X_l\right|$,
	then
	\begin{align}\label{eq-p-12}
    \begin{aligned}
        		& \mathbf{1}(\eta_B-\alpha \leq (S_{A}+s)/\sigma \leq \eta_B) \\
		& \leq \mathbf{1}\Big(a-\frac{c}{\sigma} \sum_{m \in B}|X_{m}|-\frac{1}{\sigma}\sum_{l \in  A_k}\left|X_l\right|-\alpha \leq \frac{S_{A}}{\sigma} \leq a+\frac{c}{\sigma} \sum_{m \in B}\left|X_m\right|+\frac{1}{\sigma}\sum_{l \in  A_k}\left|X_l\right|\Big) \\
		&\leq \mathbf{1}(a-\alpha-U_k \leq W \leq a+U_k),
    \end{aligned}
	\end{align}
	where
	$$
	U_k=\frac{c+1}{\sigma}\sum_{l \in B\cup A_{k}}\left|X_l\right|.
	$$
	By (\ref{eq-p-12}), we have
	\begin{align}\label{eq-p-13}
        	\left|H_{21}\right|& \leq \frac{1}{\alpha\sigma^{3}} \sum_{k \in  N_A^c} \mathbb{E}\left\{\xi_A\mathbf{1}\left(a-\alpha-U_k \leq S_{A}/\sigma \leq a+U_k\right)|X_k Y_{k,A}^{2}|\right\}\nonumber\\
		&\leq \frac{1}{\alpha\sigma^{3}} \sum_{k \in  N_A^c}|A_{k}|\sum_{l\in A_{k}\cap N_{A}^{c}} \mathbb{E}\left\{\xi_A |X_{k}X_{l}^{2}|\mathbf{1}\left(a-\alpha-U_k \leq S_{A}/\sigma \leq a+U_k\right)\right\}\\
		&\leq H_{211}+H_{212},\nonumber
	\end{align}
    where 
    \begin{align*}
        H_{211}&=\frac{1}{3\alpha\sigma^{3}} \sum_{k \in  N_A^c} |A_{k}|^{2} \mathbb{E}\big\{\xi_A |X_{k}|^{3}\mathbf{1}\left(a-\alpha-U_k \leq S_{A}/\sigma \leq a+U_k\right)\big\},\\
        H_{212}&=\frac{2}{3\alpha\sigma^{3}} \sum_{k \in  N_A^c}|A_{k}|\sum_{l\in A_{k}\cap N_{A}^{c}} \mathbb{E}\big\{\xi_A |X_{l}|^{3}\mathbf{1}\left(a-\alpha-U_k \leq S_{A}/\sigma \leq a+U_k\right)\big\}.
    \end{align*}
	For $H_{211}$, define
	\[
	A_{1,k}=A\cup \{k\},\quad S_{A_{1,k}}=S-\sum_{i\in N_{A_{1,k}}}X_{i} \quad \text{and}\quad B_{k}=B\cup A_{k}\cup N_{k}.
	\]
	Note that $N_{A}\subset N_{A_{1,k}}\subset N_{A}\cup N_{k}$, then $$|S_{A}-S_{A_{1,k}}|=\Big|\sum_{i\in N_{A_{1,k}}}X_{i}-\sum_{i\in N_{A}}X_{i}\Big|\leq \sum_{i\in N_{k}}|X_{i}|,$$ which further implies
	\begin{align}\label{eq-p-14}
		\mathbf{1}\big(a-\alpha-U_k \leq S_{A}/\sigma \leq a+U_k\big)\leq  \mathbf{1}\big(a-\alpha-U'_k \leq S_{A_{1,k}}/\sigma \leq a+U'_k\big),
	\end{align}
	where
	\[
	U'_{k}=\frac{c+2}{\sigma}\sum_{l \in B_{k}}\left|X_l\right|.
	\]
	By the definition of $K$, we have
	\begin{align}\label{eq-p-15}
        &\mathbb{E}\{\xi_{A}|X_{k}|^{3}\mathbf{1}(a-\alpha-U'_k \leq S_{A_{1,k}}/\sigma \leq a+U'_k)\}\nonumber\\
		&\quad\leq K\|\xi_{A} X_{k}^{3}\|_{4/3}\cdot\Big\{\frac{\alpha}{100}+\frac{c+2}{\sigma}\sum_{m\in N_{A}}\|X_{m}\|_{4}+\frac{c+2}{\sigma}\sum_{m\in N_{k}}\|X_{m}\|_{4}\nonumber\\
		&\quad\quad +\frac{c+2}{\sigma}\sum_{m\in B}\|X_{m}\|_{4}+\frac{c+2}{\sigma}\sum_{m\in  A_{k}}\|X_{m}\|_{4}+\frac{c+2}{\sigma}\sum_{m\in N_{k}}\|X_{m}\|_{4}\\
		&\quad\quad +\frac{c+2}{\sigma^{2}}\sum_{m\in  A_{k}}\sum_{l\in  N_{m}}\|X_{m}\|_{4}\|X_{l}\|_{4}+
		\frac{c+2}{\sigma^{2}}\sum_{m\in B}\sum_{l\in  N_{m}}\|X_{m}\|_{4}\|X_{l}\|_{4}\nonumber\\
		&\quad\quad+\frac{c+2}{\sigma^{2}}\sum_{m\in  N_{k}}\sum_{l\in  N_{m}}\|X_{m}\|_{4}\|X_{l}\|_{4}+\frac{c+2}{\sigma^{2}}\sum_{(i,j)\in D_{A}}\|X_{i}\|_{4}\|X_{j}\|_{4} \nonumber\\
		&\quad\quad+\frac{c+2}{\sigma^{2}}\sum_{(i,j)\in D_{k}}\|X_{i}\|_{4}\|X_{j}\|_{4}+\frac{c+2}{c}(\delta_{5}+\delta_{6}+\delta_{7})
		\Big\}.\nonumber
	\end{align}
	By the definitions of $\delta_{6}$ and $\delta_{7}$, we have
	\begin{align}\label{eq-p-16}
	\frac{c}{\sigma^{4}}\sum_{k\in [n]} |A_{k}|^{2} \|X_{k}\|_{4}^{3}\sum_{m\in A_{k}\cup N_{k}}\|X_{m}\|_{4} \leq \delta_{6}^{2},
	\end{align}
	and 
		\begin{align}
		\frac{c}{\sigma^{5}}\sum_{k\in [n]} |A_{k}|^{2} \|X_{k}\|_{4}^{3}\sum_{(i,j)\in D_{k}}\|X_{i}\|_{4}\|X_{j}\|_{4} &\leq \delta_{7}^{2}, \label{eq-p-17}\\
		\frac{c}{\sigma^{5}}\sum_{k\in [n]} |A_{k}|^{2} \|X_{k}\|_{4}^{3}\sum_{m\in  A_{k}\cup N_{k}}\sum_{l\in  N_{m}}\|X_{m}\|_{4}\|X_{l}\|_{4} &\leq \delta_{7}^{2}. \label{eq-p-18}
	\end{align}
	Combining (\ref{eq-p-13})--(\ref{eq-p-18}) yields
	\begin{align*}
		|H_{211}|&\leq \frac{K\|\xi_{A}\|_{4/3}\delta_{5}}{c\alpha}\cdot\Big(0.01\alpha+\delta_{1}+\delta_{2}+\delta_{3}+\delta_{4}+\delta_{5}+\delta_{6}+\delta_{7} \Big)\nonumber\\
        &\quad+\frac{3K\|\xi_{A}\|_{4/3}(\delta_{6}^{2}+\delta_{7}^{2})}{\alpha}.
	\end{align*}
	Recalling that
	\[
	\alpha=100(\delta_{5}+\delta_{6}+\delta_{7})\quad \text{and}\quad c\geq 1,
	\] then
	\begin{align}\label{eq-p-20}
		|H_{211}|&\leq 0.1\|\xi_{A}\|_{4/3} K(\delta_{1}+\delta_{2}+\delta_{3}+\delta_{4}+\delta_{5}+\delta_{6}+\delta_{7}).
	\end{align}
	For $H_{212}$, note that
	\begin{align*}
		\mathbf{1}\left(a-\alpha-U_k \leq S_{A}/\sigma \leq a+U_k\right)\big\}\leq  \mathbf{1}\left(a-\alpha-U_{k,l} \leq S_{A_{1,l}}/\sigma \leq a+U_{k,l}\right)\big\},
	\end{align*}
	where
	\[
	U_{k,l}=\frac{c+2}{\sigma}\sum_{l \in B\cup A_{k}\cup N_{l}}\left|X_l\right|.
	\]
	With similar arguments as that leading to (\ref{eq-p-20}), we have
	\begin{align}\label{eq-p-22}
		|H_{212}|\leq 0.2\|\xi_{A}\|_{4/3} K(\delta_{1}+\delta_{2}+\delta_{3}+\delta_{4}+\delta_{5}+\delta_{6}+\delta_{7}).
	\end{align}
	Combining (\ref{eq-p-20}) and (\ref{eq-p-22}) yields
	\begin{align*}
		|H_{21}|\leq 0.3\|\xi_{A}\|_{4/3} K(\delta_{1}+\delta_{2}+\delta_{3}+\delta_{4}+\delta_{5}+\delta_{6}+\delta_{7}).
	\end{align*}
	We can obtain the same upper bound for $H_{22}$ with similar argument as before, then
	\begin{align}\label{eq-p-24}
		|H_{2}|\leq 0.6\|\xi_{A}\|_{4/3} K(\delta_{1}+\delta_{2}+\delta_{3}+\delta_{4}+\delta_{5}+\delta_{6}+\delta_{7}).
	\end{align}
	It follows from (\ref{eq-p-02}), (\ref{eq-p-07}), (\ref{eq-p-08}), (\ref{eq-p-09}), (\ref{eq-p-11}) and  (\ref{eq-p-24}) that
	\begin{align*}
        &0.98\mathbb{E}\{\xi_A \mathbf{1}(\eta_B \leq S_{A}/\sigma \leq \zeta_B)\}\\
		& \quad \leq 0.55 (b-a)\left\|\xi_A\right\|_{4/3}+55\left\|\xi_A\right\|_{4/3}(\delta_{5}+\delta_{6}+\delta_{7})\\
        &\qquad+\|\xi_A\|_{4/3}(2\delta_{2}+\delta_{3}+2\delta_{4}+4\delta_{6}) \\
		& \qquad+0.6\|\xi_{A}\|_{4/3} K(\delta_{1}+\delta_{2}+\delta_{3}+\delta_{4}+\delta_{5}+\delta_{6}+\delta_{7})\\
		&\quad\leq 59\left\|\xi_A\right\|_{4/3}\big( 0.01(b-a)+\delta_{1}+\delta_{2}+\delta_{3}+\delta_{4}+\delta_{5}+\delta_{6}+\delta_{7}\big)\\
		& \qquad+0.6\|\xi_{A}\|_{4/3} K(\delta_{1}+\delta_{2}+\delta_{3}+\delta_{4}+\delta_{5}+\delta_{6}+\delta_{7}).
	\end{align*}
	By the definition of $K$, we have
	$$
	0.98K \leq 59+0.6 K,
	$$
	and hence
	$$
	K \leq 156.
	$$
	This completes the proof.
\end{proof}
In what follows, we provide a concentration inequality for self-normalized  sums of locally dependent random variables. 

\begin{prop}\label{prop:2} Let $A \subset[n]$ and $B \subset[n]$ be two arbitrary nonempty index sets. Moreover, let $ \xi_A=h\left(X_A\right) \geqslant 0$ and
	\begin{align*}
		\bar{V}_{A}&=\psi\Big(\sum_{k\in N_{A}^{c}}\sum_{l\in N_{A}^{c}\cap A_{k}} X_{k}X_{l}\Big),\quad && S_{A}=\sum_{k\in N_{A}^{c}}X_{k},\nonumber\\
		\eta_{A,B}&=a-c\sum_{m \in B}\left|X_m\right|/\sigma-c|S_{A}|\cdot Q_{A}/\sigma,\quad
		&&\zeta_{A,B}=b+c\sum_{m \in B}\left|X_m\right|/\sigma+c|S_{A}|\cdot Q_{A}/\sigma,
	\end{align*} where
	\begin{align*}
		&\psi(x)=((x\vee (0.25\sigma^{2}))\wedge (2\sigma^{2}))^{1/2},\quad T_{A}^{2}=\frac{1}{\sigma^{2}}\sum_{k\in N_{A}}\sum_{l\in A_{k}}|X_{k}X_{l}|+\frac{1}{\sigma^{2}}\sum_{k\in N_{A}}\sum_{l\in N_{k}}|X_{k}X_{l}|,\nonumber\\
		& Q_{A}=\min \{1, T_{A}\}
	\end{align*}
	and $a, b \in \mathbb{R}$ satisfy  $a \leq b$ and $c\geq 1$. Then,
	\begin{align*}
		\mathbb{E}\{\xi_A \mathbf{1}\big(\eta_{A,B} \leq S_{A}/\bar{V}_{A} \leq \zeta_{A,B}\big)\} \leq 8755\left\|\xi_A\right\|_{4/3}\Big\{\frac{(b-a)}{1500}+\delta_1+\delta_{2}+\delta_{3}+\delta_{4}\Big\},
	\end{align*}
	where
	$$
	\begin{aligned}
		\lambda&=\kappa\sum_{k\in [n]}\|X_{k}\|_{2}^{2}/\sigma^{2},\qquad  \delta_1 =\frac{c}{\sigma} \sum_{m \in B}\left\|X_m\right\|_4,  \\
		\delta_{2}  &=\frac{c\lambda\kappa^2|A|^2 }{\sigma^3} \sum_{i=1}^n\left\|X_i\right\|_{4}^3, \\
		\delta_{3} & =\frac{c\lambda\kappa^{1/2}(\kappa+\tau^{1/2})|A|^{1 / 2} }{\sigma^2}\big(\sum_{i=1}^n\left\|X_i\right\|_4^4\big)^{1/2},\\
		\delta_{4}^{2} &=\frac{\lambda^{2}c^{2}}{\sigma^{2}}\sum_{k\in N_{A}}\sum_{l\in N_{k}\cup A_{k}}\|X_{k}\|_{4}\|X_{l}\|_{4}.
	\end{aligned}
	$$
\end{prop}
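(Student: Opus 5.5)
Since $\bar V_A$ is truncated to $[\sigma/2,\sqrt2\,\sigma]$, I will not run a new Stein argument for the self-normalized ratio. Instead I will reduce the event $\{\eta_{A,B}\le S_A/\bar V_A\le \zeta_{A,B}\}$ to events of the type controlled by Proposition~\ref{prop:1}. The reduction replaces the random normalizer $\bar V_A$ by a deterministic level and absorbs the fluctuation into a widened window. Put $\Delta_A=\bar V_A^2/\sigma^2-1$. On the event, $S_A/\sigma$ lies in $[\eta_{A,B}\bar V_A/\sigma,\ \zeta_{A,B}\bar V_A/\sigma]$. Because $\bar V_A/\sigma\in[1/2,\sqrt2]$ and $|\bar V_A/\sigma-1|\le |\Delta_A|$, this interval sits inside
\[
\Big[a'-\tfrac{2c}{\sigma}\textstyle\sum_{m\in B}|X_m|-R,\ b'+\tfrac{2c}{\sigma}\sum_{m\in B}|X_m|+R\Big],
\]
for suitable constants $a'\le b'$ with $b'-a'\le C(b-a)$, where the remainder is
\[
R=C\big(|a|+|b|\big)|\Delta_A|+C|S_A|Q_A/\sigma .
\]
To avoid an unbounded $|a|+|b|$, I first note that for $|z|$ large the probability is already small. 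One can use $\E\{\xi_A S_A^4\}\le 13\lambda\sigma^4\E\xi_A$ from Lemma~\ref{lem-fourth-moment-for-S} together with Markov's inequality, which costs a factor $\lambda$ and is the reason $\lambda$ appears in $\delta_2,\delta_3,\delta_4$. So I split into the cases $|a|\vee|b|\le M$ and $|a|\vee |b|> M$. In the first case $R\le C|\Delta_A|+C|S_A|Q_A/\sigma$.

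Next I bound the contribution of $R$. This is done by Chebyshev-type splitting: on $\{R\le \epsilon\}$ the indicator is dominated by a window of width $b-a+2\epsilon$ plus the $B$-term, and $\E\{\xi_A\mathbf 1(R>\epsilon)\}\le \|\xi_A\|_{4/3}\|R\|_{4}^{\,\cdot}/\epsilon^{\cdot}$ by H\"older. To bound $\|\Delta_A\|$ I use Lemma~\ref{lem-XiYi-moment} in the weighted form (\ref{eq-t-02}), which gives a bound of order $\gamma_A+\sqrt\gamma$ in terms of $\|X\|_4$. I also need $|\E S_A^2-\sigma^2|\lesssim \sigma\delta_1$-type quantities, obtained from (\ref{eq-tt-9.1}) and Lemma~\ref{lem-second-moment-for-S}. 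These are then bounded by $\delta_3$ and $\delta_2$ through (\ref{eq-tt-9.2}) and Young's inequality, exactly as in the proof of Lemma~\ref{lem-fourth-moment-for-S}. For $|S_A|Q_A$ I use $Q_A\le T_A$ and $\E\{\xi_A^{p}S_A^2\}\le 1.1\sigma^2\E\xi_A^p$, giving $\E\{\xi_A|S_A|Q_A\}/\sigma\lesssim \|\xi_A\|_{4/3}\|T_A\|_2$. Since $\|T_A\|_2^2\le \delta_4^2/(\lambda c)^2$, this is $\lesssim\delta_4$.

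The main obstacle is that the random, $\bar V_A$-dependent threshold cannot simply be moved to the right-hand side as a probability. I need it to enter the window linearly, so that the final bound is linear in $\delta_2,\delta_3$ rather than a square root. My first attempt is to keep $R$ inside the window instead of discarding it. I would write $|\Delta_A|\le \Delta_A^{(1)}+\Delta_A^{(2)}$, where $\Delta_A^{(1)}=\sigma^{-2}\big|\sum_{k\in N_A^c}\sum_{l\in A_k\cap N_A^c}(X_kX_l-\E X_kX_l)\big|$ is a function of $X_{N_A^c}$ and $\Delta_A^{(2)}$ is deterministic and small. The $\Delta_A^{(1)}$ part is then handled by dyadic slicing: on $\{2^{j-1}\epsilon<\Delta^{(1)}_A\le 2^j\epsilon\}$ I apply Proposition~\ref{prop:1} with window width $2^{j}\epsilon$. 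The weight $\xi_A\mathbf 1(\Delta_A^{(1)}>2^{j-1}\epsilon)$ is not a function of $X_A$ alone, so at this step I would instead bound it via (\ref{eq-t-02}) with $p=4/3$. I will summing the geometric series with $\epsilon\asymp\delta_2+\delta_3$. The $|S_A|Q_A$ term is treated the same way, with $Q_A$ depending on $X_{N_A}$ and $S_A$ recentred using $S_{A'}$ for an enlarged $A'\supset N_A$. This enlargement produces the $|A|^2$ and $|A|^{1/2}$ factors appearing in $\delta_2,\delta_3$.

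Finally I collect terms. Proposition~\ref{prop:1} applied with $c$ replaced by $2c$ gives $\delta_0,\dots,\delta_3$ of Proposition~\ref{prop:1}, which are dominated by the present $\delta_1,\delta_2,\delta_3$. Its $\delta_4$–$\delta_7$ are dominated using $|A_i|\le\kappa$, $|D_i|\le\tau$ and $\lambda\ge c_0$. All the numerical constants then combine into the stated $8755$.
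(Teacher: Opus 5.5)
Your reduction to Proposition~\ref{prop:1} has a genuine gap: it cannot give a bound linear in $\delta_2,\delta_3,\delta_4$. The step you yourself flag as the main obstacle is exactly where it fails.

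Proposition~\ref{prop:1} accepts only two kinds of input. Its window endpoints may be random only through $c\sum_{m\in B}|X_m|/\sigma$ for a fixed set $B$, paid for by $\frac{c}{\sigma}\sum_{m\in B}\|X_m\|_4$. Its weights must be $\xi_A=h(X_A)$. Both restrictions are essential, because its proof rests on $X_k$ being independent of $(\xi_A,\eta_B^{(k)},\zeta_B^{(k)},S_A^{(k)})$ for $k\in N_A^c$.

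Your remainder $R$ contains $(|a|+|b|)|\Delta_A|$ and $|S_A|Q_A/\sigma$. Both depend on the whole field $X_{N_A^c}$ that drives $S_A$, so no fixed $B$ and no enlarged $A'\supset N_A$ makes them admissible.

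\emph{The dyadic slicing.} To gain anything there, you need $\E\{\xi_A\mathbf 1(\Delta_A^{(1)}>2^{j-1}\epsilon)\mathbf 1(\text{window of width }2^j\epsilon)\}$ to be of order $2^j\epsilon$ times the tail. That is Proposition~\ref{prop:1} with the weight $\xi_A\mathbf 1(\Delta_A^{(1)}>2^{j-1}\epsilon)$, which is not $X_A$-measurable. Falling back on (\ref{eq-t-02}) gives only the tail factor, $\E\{\xi_A\mathbf 1(\Delta_A^{(1)}>t)\}\lesssim\|\xi_A\|_{4/3}\,\delta_3^2/t^2$, and throws away the window factor. Any way of recombining the two yields order $\delta_3^{2/3}$, not $\delta_3$; this holds whether you balance $\epsilon$ against $\delta_3^2/\epsilon^2$ or use Cauchy--Schwarz slice by slice.

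\emph{The $|S_A|Q_A$ term.} It fares worse. Your first-moment bound $\E\{\xi_A|S_A|Q_A\}/\sigma\lesssim\|\xi_A\|_{4/3}\delta_4$, combined with Markov, gives only $\delta_4^{1/2}$.

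\emph{The truncation in $|a|\vee|b|$.} It has the same defect. The bound $\E\{\xi_A S_A^4\}\le 13\lambda\sigma^4\E\xi_A$ gives a tail of order $\lambda M^{-4}$. This is $O(\delta)$ only if $M\asymp(\lambda/\delta)^{1/4}$, and that factor then multiplies $|\Delta_A|$ inside the window.

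\emph{The final bookkeeping.} The $\delta_1$ of Proposition~\ref{prop:1}, namely $\frac{c}{\sigma}\sum_{i\in N_A}\|X_i\|_4$, is in general not bounded by a constant multiple of the present $\delta_1,\dots,\delta_4$. One only has a bound by $|N_A|^{1/2}\delta_4/\lambda$, or by a cube root of $\delta_2$ as in (\ref{eq-tt-9.1}).

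What is missing is a concentration inequality in which both the window and the normalizer may be global random quantities, paid for through leave-one-out differences rather than through moments. This is how the paper proves Proposition~\ref{prop:2}.
\begin{itemize}
\item It reruns the recursive Stein argument directly on $W_A=S_A/\bar V_A$, using the smoothed indicator $f_{\eta_{A,B},\zeta_{A,B}}$.
\item It compares $\bar V_A,\eta_{A,B},\zeta_{A,B}$ with copies $\bar V_A^{(k)},\eta_{A,B}^{(k)},\zeta_{A,B}^{(k)}$ that are independent of $X_k$. Their fluctuations then enter linearly, through terms like $\E\{\xi_A|X_k|\,|\eta_{A,B}-\eta_{A,B}^{(k)}|\}$ and $|1/\bar V_A-1/\bar V_A^{(k)}|\le 4G_k^2/\sigma^3$; this is the term $H_4$.
\item The truncation $\bar V_A^2\le 2\sigma^2$ is used only for the lower bound $H_1\ge 0.38\,\E\{\xi_A\mathbf 1(\eta_{A,B}\le W_A\le\zeta_{A,B})\}$.
\end{itemize}
The term $c|S_A|Q_A/\sigma$ is not an error to be estimated away. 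It is built into the window so that the recursion closes. When $A$ is enlarged to $A\cup\{k\}$ in $H_2$, the change of normalizer costs $|S_{A\cup\{k\}}|\,|1/\bar V_A-1/\bar V_{A\cup\{k\}}|\le 4|S_{A\cup\{k\}}|Q_{A\cup\{k\}}/\sigma$. That cost is reabsorbed into a window of the same form and controlled by the definition of $K$. Your route has no counterpart to either mechanism, so it cannot reach the stated linear bound.
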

\begin{proof}
 We  prove this proposition by a recursive argument. To this end, let $K$ be defined as
	$$
	K=\sup \frac{\e\{\xi_{A}\mathbf{1}(\eta_{A,B}\leq S_{A}/\bar{V}_{A}\leq \zeta_{A,B})\}}{ (b-a)/1500+\delta_1+\delta_{2}+\delta_{3}+\delta_{4}}
	$$
	where the supremum is taken over all  $A, B,a,b,c$   as in Proposition \ref{prop:2}.  Without loss of generality, we assume that 
\begin{align}\label{eq-assume-prop-2}
	\delta_{2} \leq  \frac{1}{500}\quad\text{and}\quad \delta_{3}\leq \frac{1}{500}, 
	\end{align}
    otherwise the inequality is trivial.
	Let $f_{\eta_{A,B}, \zeta_{A,B},\alpha}(w)$ be defined by (\ref{eq-fabw}) with
	$$
	\alpha=1500\left(\delta_{2}+\delta_{3}\right).
	$$
    Again,  for simplicity of notation, in what follows we omit the dependence on $\alpha$ and write
$f_{\eta_{A,B}, \zeta_{A,B}}(w) := f_{\eta_{A,B}, \zeta_{A,B},\alpha}(w).$
	%
	Further, for any $k\in N_{A}^{c}$, define
	\begin{align*}
		\bar{V}_{A}^{(k)}&=\psi\Big(\sum_{i\in A_{k}^{c}\cap N_{A}^{c}}\sum_{j\in N_{A}^{c}\cap A_{i}\cap A_{k}^{c}} X_{i}X_{j}\Big),\\
		\eta_{A,B}^{(k)}&=a-\frac{c}{\sigma}\sum_{m \in B\cap A_{k}^{c}}\left|X_m\right|-\frac{c}{\sigma}|S_{A}^{(k)}|\cdot Q_{A}^{(k)},\\
		\zeta_{A,B}^{(k)}&=b+\frac{c}{\sigma}\sum_{m \in B\cap A_{k}^{c}}\left|X_m\right|+\frac{c}{\sigma}|S_{A}^{(k)}|\cdot Q_{A}^{(k)},\\
		T_{A}^{(k)2}&=\frac{1}{\sigma^{2}}\sum_{i\in N_{A}\cap A_{k}^{c}}\sum_{l\in A_{i}\cap A_{k}^{c}}|X_{i}X_{j}|+\frac{1}{\sigma^{2}}\sum_{i\in N_{A}\cap A_{k}^{c}}\sum_{j\in N_{i}\cap A_{k}^{c}}|X_{i}X_{j}|,\nonumber\\
		S_{A}^{(k)}&=\sum_{i\in N_{A}^{c}\cap A_{k}^{c}}X_{i},\quad Q_{A}^{(k)}=\min\{1, T_{A}^{(k)}\}.
	\end{align*}
	Then, it follows that $X_k$ is independent of $\big(\xi_A, \eta_{A,B}^{(k)}, \zeta_{A,B}^{(k)}, S_{A}^{(k)}, \bar{V}_{A}^{(k)}\big)$ if $k\in N_{A}^{c}$. Therefore, with $W_{A}=S_{A}/\bar{V}_{A}$, we have
	\begin{align*}
        		&\mathbb{E}\big\{\xi_A W_{A} f_{\eta_{A,B}, \zeta_{A,B}}(W_{A})\big\}\\
		&=  \sum_{k \in  N_A^c} \mathbb{E}\Big\{\xi_A\cdot\Big( \frac{X_k}{\bar{V}_{A}}\Big) f_{\eta_{A,B}, \zeta_{A,B}}(W_{A})\Big\} \\
		&=  \sum_{k \in  N_A^c} \mathbb{E}\Big\{\xi_A \cdot\Big( \frac{X_k}{\bar{V}_{A}}\Big)\big(f_{\eta_{A,B}, \zeta_{A,B}}(W_{A})-f_{\eta_{A,B}, \zeta_{A,B}}((S_{A}-Y_{k,A})/\bar{V}_{A})\big)\Big\} \\
		&\quad +\sum_{k \in  N_A^c} \mathbb{E}\Big\{\xi_A \Big(\Big( \frac{X_k}{\bar{V}_{A}}\Big)\cdot f_{\eta_{A,B}, \zeta_{A,B}}\Big(\frac{S_{A}-Y_{k,A}}{\bar{V}_{A}}\Big)-\Big( \frac{X_k}{\bar{V}_{A}^{(k)}}\Big)\cdot f_{\eta_{A,B}^{(k)}, \zeta_{A,B}^{(k)}}\Big(\frac{S_{A}-Y_{k,A}}{\bar{V}_{A}^{(k)}}\Big)\Big)\Big\} \\
		&= H_1+H_2+H_3+H_{4},
	\end{align*}
	where
	\begin{align*}
		H_1&= \mathbb{E}\Big\{\int_{\R}\frac{\xi_{A}}{\bar{V}_{A}^{2}}\cdot f_{\eta_{A,B}, \zeta_{A,B}}^{\prime}(W_{A}) M(t) \mathrm{d} t\Big\}, \\
		H_2&=\sum_{k \in  N_A^c} \mathbb{E}\Big\{\int_{\R}\frac{\xi_{A}}{\bar{V}_{A}^{2}}\cdot \Big(f_{\eta_{A,B}, \zeta_{A,B}}^{\prime}\Big(\frac{S_{A}+t}{\bar{V}_{A}}\Big)-f_{\eta_{A,B}, \zeta_{A,B}}^{\prime}\Big( \frac{S_{A}}{\bar{V}_{A}} \Big)\Big) \widehat{M} _{k}(t) \mathrm{d} t\Big\},\\
		H_{3}&=\mathbb{E}\Big\{\int_{\R}\frac{\xi_{A}}{\bar{V}_{A}^{2}}\cdot f_{\eta_{A,B}, \zeta_{A,B}}^{\prime}(W_{A}) (\widehat{M}(t)-M(t)) \mathrm{d} t\Big\}, \\
		H_4&=\sum_{k \in  N_A^c} \mathbb{E}\Big\{\xi_A \Big[\Big( \frac{X_k}{\bar{V}_{A}}\Big)\cdot f_{\eta_{A,B}, \zeta_{A,B}}\Big(\frac{S_{A}-Y_{k,A}}{\bar{V}_{A}}\Big)-\Big( \frac{X_k}{\bar{V}_{A}^{(k)}}\Big)\cdot f_{\eta_{A,B}^{(k)}, \zeta_{A,B}^{(k)}}\Big(\frac{S_{A}-Y_{k, A}}{\bar{V}_{A}^{(k)}}\Big)\Big]\Big\}
	\end{align*}
    and $M(t),\widehat{M}_{k}(t), \widehat{M}(t)$ are defined as in (\ref{eq-definition-of-M}).
	Next, we will provide the upper bounds of $|\mathbb{E}\{\xi_A W_{A} f_{\eta_{A,B}, \zeta_{A,B}}(W_{A})\}|, H_2$, $H_{3}$ and $H_4$, and the lower bound of $H_1$. We remark that we  use the recursive argument when we prove the upper bound of $H_2$. Now, we prove the bounds one by one.
	\\ \hspace*{\fill} \\
	{\it(i) Upper bound of $|\mathbb{E}\{\xi_A W_{A} f_{\eta_{A,B}, \zeta_{A,B}}(W_{A})\}|$.} Recall that $|\eta_{A,B}-\zeta_{A,B}|\leq (b-a)+2c\sum_{m \in B}\left|X_m\right|/\sigma+2c|S_{A}|\cdot Q_{A}/\sigma$, it follows from the definition of $f_{\eta_{A,B},\zeta_{A,B}}(\cdot)$ and  H\"{o}lder's inequality that
	\begin{align}\label{eq-pp-02}
        		&\big|\mathbb{E}\big\{\xi_A W_{A} f_{\eta_{A,B}, \zeta_{A,B}}(W_{A})\big\}\big|\nonumber\\
		&\quad\leq  \mathbb{E}\Big|\xi_A W_{A} \frac{\zeta_{A,B}-\eta_{A,B}+\alpha}{2}\Big| \nonumber\\
		&\quad\leq  \frac{(b-a+\alpha)}{2}\e|\xi_AW_{A}|  +\frac{c}{\sigma} \sum_{m \in B} \mathbb{E}|X_m \xi_A W_{A}|+\frac{c}{\sigma}\e |\xi_{A}W_{A}S_{A}Q_{A}| \\
		&\quad\leq  \frac{b-a+\alpha}{\sigma}\e|\xi_A S_{A}|  +\frac{2c}{\sigma^{2}} \sum_{m \in B} \mathbb{E}|X_m \xi_A S_{A}|+\frac{2c}{\sigma^{2}}\e |\xi_{A}S_{A}^{2}Q_{A}|.\nonumber
	\end{align}
 By (\ref{eq-tt-9.1}), we have
	\begin{align}\label{eq-pp-2.2}
		0.874\sigma\leq \|S_{A}\|\leq 1.126\sigma.
	\end{align}
Combining (\ref{eq-tt-9.2}),  (\ref{eq-p-06}), (\ref{eq-pp-2.2}) and Lemma \ref{lem-second-moment-for-S}, for the first two terms of (\ref{eq-pp-02}), we have 
	\begin{align}\label{eq-pp-03}
    \begin{aligned}
         &\frac{b-a+\alpha}{\sigma}\e|\xi_A S_{A}|  +\frac{2c}{\sigma^{2}} \sum_{m \in B} \mathbb{E}|X_m \xi_A S_{A}|\\
		 &\quad\leq 1.2(b-a+\alpha)\|\xi_{A}\|_{4/3}+\|\xi_{A}\|_{4/3}\cdot \frac{3c}{\sigma}\sum_{m\in B}\|X_{m}\|_{4}\\
		 &\quad\leq 1.2(b-a+\alpha)\|\xi_{A}\|_{4/3}+3\|\xi_{A}\|_{4/3}\cdot \delta_{1}.
    \end{aligned}	
	\end{align}
	As for $\e |\xi_{A}S_{A}^{2}Q_{A}|$, recall that $Q_{A}=\min\{1, T_{A}\}$, by H\"{o}lder's inequality, we have
	\begin{align}\label{eq-pp-04}
		\e |\xi_{A}S_{A}^{2}Q_{A}|\leq \e |\xi_{A}S_{A}^{2}T_{A}|\leq \|\xi_{A}S_{A}^{2}\|_{4/3} \cdot\|T_{A}\|_{4}.
	\end{align}
For the second term of (\ref{eq-pp-04}), we observe that
	\begin{align*}
        		\e\{T_{A}^{4}\}&\leq \frac{1}{\sigma^{4}}\e \Big(\sum_{k\in N_{A}}\sum_{l\in A_{k}}|X_{k}X_{l}|+\sum_{k\in N_{A}}\sum_{l\in N_{k}}|X_{k}X_{l}|\Big)^{2}\\
		&\leq \frac{2}{\sigma^{4}}\e \Big(\sum_{k\in N_{A}}\sum_{l\in A_{k}}|X_{k}X_{l}|\Big)^{2}+\frac{2}{\sigma^{4}}\e \Big(\sum_{k\in N_{A}}\sum_{l\in N_{k}}|X_{k}X_{l}|\Big)^{2}\\
		&\leq \frac{2}{\sigma^{4}} \Big(\sum_{k\in N_{A}}\sum_{l\in A_{k}}\|X_{k}\|_{4}\|X_{l}\|_{4} \Big)^{2}+\frac{2}{\sigma^{4}}\Big(\sum_{k\in N_{A}}\sum_{l\in N_{k}}\|X_{k}\|_{4}\|X_{l}\|_{4} \Big)^{2}\\
		&\leq  \frac{4}{\sigma^{4}} \Big(\sum_{k\in N_{A}}\sum_{l\in A_{k}\cup N_{k}}\|X_{k}\|_{4}\|X_{l}\|_{4} \Big)^{2},
	\end{align*}
	which implies that
	\begin{align}\label{eq-pp-06}
		\|T_{A}\|_{4}\leq \frac{\sqrt{2}}{\sigma} \Big(\sum_{k\in N_{A}}\sum_{l\in N_{k}\cup A_{k}}\|X_{k}\|_{4}\|X_{l}\|_{4} \Big)^{1/2}.
	\end{align}
    For the first term of (\ref{eq-pp-04}), by H\"{o}lder's inequality and  Lemma \ref{lem-fourth-moment-for-S}, we have 
    \begin{align}\label{eq-pp-6.1}
       \|\xi_{A}S_{A}^{2}\|_{4/3}&\leq \big(\e\{\xi_{A}^{4/3}\}\big)^{1/4}\cdot \big(\e\{\xi_{A}^{4/3}S_{A}^{4}\}\big)^{1/2}\leq \sqrt{13\lambda}\sigma^{2}\|\xi_{A}\|_{4/3}.
    \end{align}
    Combining (\ref{eq-pp-04}), (\ref{eq-pp-06}) and (\ref{eq-pp-6.1}), we have
	\begin{align}\label{eq-pp-07}
		\frac{2c}{\sigma^{2}}\e |\xi_{A}S_{A}^{2}T_{A}|\leq \frac{2c}{\sigma^{2}} \|\xi_{A}S_{A}^{2}\|_{4/3}\|T_{A}\|_{4}\leq 11\|\xi_{A}\|_{4/3}\delta_{4}.
	\end{align}
	Combining (\ref{eq-pp-02}), (\ref{eq-pp-03}) and (\ref{eq-pp-07}) yields 
	\begin{align}\label{eq-pp-08}
    \begin{aligned}
        		&\big|\mathbb{E}\big\{\xi_A W_{A} f_{\eta_{A,B}, \zeta_{A,B}}(W_{A})\big\}\big|\\
        &\quad
		\leq 1.2(b-a+\alpha)\|\xi_{A}\|_{4/3}+3\|\xi_{A}\|_{4/3}\cdot \delta_{1} +11\|\xi_{A}\|_{4/3}\delta_{4}.
    \end{aligned}
	\end{align}
	
	\noindent
	{\it (ii) Lower bound of $H_{1}.$} By (\ref{eq-p-7.1}) and (\ref{eq-pp-2.2}), we have
	\begin{align*}
		\int_{\R} M(t)\mathrm{d}t&=\e\{S_{A}^{2}\}\geq 0.76.
	\end{align*}
	Noting that $\bar{V}_{A}^{2}\leq 2\sigma^{2}$ and using the inequality $f_{\eta_{A,B}, \zeta_{A,B}}^{\prime}(W_{A})\geq \mathbf{1}(\eta_{A,B}\leq W_{A}\leq \zeta_{A,B})$, we obtain 
	\begin{align}\label{eq-pp-10}
		H_{1}&\geq  0.76\mathbb{E}\Big\{\frac{\sigma^{2}\xi_{A}\mathbf{1}(\eta_{A,B}\leq W_{A}\leq \zeta_{A,B})}{\bar{V}_{A}^{2}}\Big\}\geq 0.38 \e\{ \xi_{A}\mathbf{1}(\eta_{A,B}\leq W_{A}\leq \zeta_{A,B}) \}.
	\end{align}
	\noindent
	{\it (iii) Upper bound of $H_{3}$.} For $H_{3}$, by the fact that $\|f_{\eta_{A,B}, \zeta_{A,B}}^{\prime}\|_{\infty}\leq 1$,  Lemma \ref{lem-XiYi-moment} and (\ref{eq-tt-9.2}), we have
	\begin{align}\label{eq-pp-11}
    \begin{aligned}
        		|H_{3}|&\leq \frac{4}{\sigma^{2}}\mathbb{E}\Big\{\xi_{A}\cdot \Big|\sum_{k\in N_{A}^{c}}\sum_{l\in N_{A}^{c}\cap A_{k}}(X_{k}X_{l}-\e X_{k}X_{l})\Big|\Big\}\\
		&\leq 16\sqrt{2}(\e\{\xi_{A}\})\cdot  \frac{|A|^{1/2}(\kappa^{3/2}+\kappa^{1/2}\tau^{1/2})}{\sigma^{2}}\Big(\sum_{i\in [n]}\|X_{i}\|_{4}^{4}\Big)^{1/2}\\
		&\leq 22.7\|\xi_{A}\|_{4/3}\delta_{3}.
    \end{aligned}
	\end{align}
	\noindent
	{\it (iv) Upper bound of $H_{4}$.}  For $H_{4}$, we have
	\begin{align}\label{eq-kk-01}
    \begin{aligned}
        		H_{4}&=\sum_{k \in  N_A^c} \mathbb{E}\Big\{\xi_A \Big( \frac{X_k}{\bar{V}_{A}}- \frac{X_k}{\bar{V}_{A}^{(k)}}\Big)\cdot f_{\eta_{A,B}, \zeta_{A,B}}\Big(\frac{S_{A}-Y_{k}}{\bar{V}_{A}}\Big)\Big\}\\
		&\quad+\sum_{k \in  N_A^c} \mathbb{E}\Big\{\xi_A\cdot \Big( \frac{X_k}{\bar{V}_{A}^{(k)}}\Big)\cdot\Big( f_{\eta_{A,B}, \zeta_{A,B}}\Big(\frac{S_{A}-Y_{k}}{\bar{V}_{A}}\Big)- f_{\eta_{A,B}^{(k)}, \zeta_{A,B}^{(k)}}\Big(\frac{S_{A}-Y_{k}}{\bar{V}_{A}^{(k)}}\Big)\Big)\Big\}\\
		&:= H_{41}+H_{42}+H_{43},
    \end{aligned}
	\end{align}
	where
	\begin{align*}
		H_{41}&=\sum_{k \in  N_A^c} \mathbb{E}\Big\{\xi_A \Big( \frac{X_k}{\bar{V}_{A}}- \frac{X_k}{\bar{V}_{A}^{(k)}}\Big)\cdot f_{\eta_{A,B}, \zeta_{A,B}}\Big(\frac{S_{A}-Y_{k}}{\bar{V}_{A}}\Big)\Big\},\nonumber\\
		H_{42}&=\sum_{k \in  N_A^c} \mathbb{E}\Big\{\xi_A\cdot \Big( \frac{X_k}{\bar{V}_{A}^{(k)}}\Big)\cdot\Big( f_{\eta_{A,B}, \zeta_{A,B}}\Big(\frac{S_{A}-Y_{k}}{\bar{V}_{A}}\Big)- f_{\eta_{A,B}^{(k)}, \zeta_{A,B}^{(k)}}\Big(\frac{S_{A}-Y_{k}}{\bar{V}_{A}}\Big)\Big)\Big\},\nonumber\\
		H_{43}&=\sum_{k \in  N_A^c} \mathbb{E}\Big\{\xi_A\cdot \Big( \frac{X_k}{\bar{V}_{A}^{(k)}}\Big)\cdot\Big( f_{\eta_{A,B}^{(k)}, \zeta_{A,B}^{(k)}}\Big(\frac{S_{A}-Y_{k}}{\bar{V}_{A}}\Big)- f_{\eta_{A,B}^{(k)}, \zeta_{A,B}^{(k)}}\Big(\frac{S_{A}-Y_{k}}{\bar{V}_{A}^{(k)}}\Big)\Big)\Big\}.
	\end{align*}
	In what follows, we provide the upper bounds  of $H_{41}, H_{42}$ and $H_{43}$, seperately. For $H_{41}$, 
    Recall that $\bar{V}_{A}\geq \sigma/2$ and $\bar{V}_{A}^{(k)}\geq \sigma/2$, then 
	\begin{align}\label{eq-kk-04}
    \begin{aligned}
        		\Big|\frac{1}{\bar{V}_{A}}-\frac{1}{\bar{V}_{A}^{(k)}}  \Big|&\leq  \frac{|\bar{V}_{A}^{2}-\bar{V}_{A}^{(k)2}|}{\bar{V}_{A}\bar{V}_{A}^{(k)}(\bar{V}_{A}+\bar{V}_{A}^{(k)})}
    \\
        &\leq \frac{4}{\sigma^{3}}\Big(\sum_{i\in N_{A}^{c}\cap A_{k}}\sum_{j\in N_{A}^{c}\cap A_{i}}|X_{i}X_{j}|+
	 		\sum_{i\in N_{A}^{c}\cap A_{k}}\sum_{j\in N_{A}^{c}\cap N_{i}}|X_{i}X_{j}|\Big)\\&:=\frac{4}{\sigma^{3}} G_{k}^{2}. 
    \end{aligned}
	\end{align}
	By the fact that $\|f_{\eta_{A,B}, \zeta_{A,B}}\|_{\infty}\leq (\zeta_{A,B}-\eta_{A,B}+\alpha)/2$ and (\ref{eq-kk-04}), we have
	\begin{align}\label{eq-kk-06}
    \begin{aligned}
        		H_{41} 
		&\leq H_{411}+H_{412}+H_{413},
    \end{aligned}
	\end{align}
    where 
    \begin{align*}
        H_{411}&=\frac{2(b-a+\alpha)}{\sigma^{3}}\sum_{k \in  N_A^c} \mathbb{E}\big\{\xi_A|X_{k}| G_{k}^{2}\big\},\quad H_{412}=\frac{4c}{\sigma^{4}}\sum_{k \in  N_A^c}\sum_{m\in B} \mathbb{E}\big\{\xi_A|X_{k}X_{m}|G_{k}^{2}\big\},\\
	H_{413}&=\frac{4c}{\sigma^4}\sum_{k \in  N_A^c}\mathbb{E}\big\{\xi_A|X_{k}S_{A}|Q_{A}G_{k}^{2}\big\}.
    \end{align*}
	For $H_{411}$, note that by H\"{o}lder's inequality, we have 
	\begin{align}\label{eq-kk-07}
		\frac{1}{\sigma^{3}}\sum_{k \in  N_A^c} \mathbb{E}\big\{\xi_A|X_{k}| G_{k}^{2}\big\}
		&\leq 	\frac{1}{\sigma^{3}}\sum_{k \in  N_A^c} \|\xi_{A}\|_{1}^{1/2}\|X_{k}\|_{2}\cdot \big(\mathbb{E}\big\{\xi_{A}G_{k}^{4}\big\}\big)^{1/2}.
	\end{align}
 For any $p\geq 0$, by H\"{o}lder's inequality,  we have for any $i,j,i',j'\in N_{A}^{c}$,
 \begin{align*}
     \e\{\xi_{A}^{p}|X_{i}X_{j}X_{i'}X_{j'}|\}&\leq \|\xi_{A}^{p/4}X_{i}\|_{4}\|\xi_{A}^{p/4}X_{j}\|_{4}\|\xi_{A}^{p/4}X_{i'}\|_{4}\|\xi_{A}^{p/4}X_{j'}\|_{4}\nonumber\\
     &=\e\{\xi_{A}^{p}\}\cdot \|X_{i}\|_{4}\|X_{j}\|_{4}\|X_{i'}\|_{4}\|X_{j'}\|_{4},
 \end{align*}
 then
 \begin{align}\label{eq-kk-08}
 \begin{aligned}
   &\mathbb{E}\big\{\xi_{A}^{p}G_{k}^{4}\big\}\\
		&\leq 2\sum_{i\in N_{A}^{c}\cap A_{k}}\sum_{j\in N_{A}^{c}\cap A_{i}}\sum_{i'\in N_{A}^{c}\cap A_{k}}\sum_{j'\in N_{A}^{c}\cap A_{i'}}\e\{\xi_{A}^{p}|X_{i}X_{j}X_{i'}X_{j'}|\}\\
		&\quad+
		2\sum_{i\in N_{A}^{c}\cap A_{k}}\sum_{j\in N_{A}^{c}\cap N_{i}}\sum_{i'\in N_{A}^{c}\cap A_{k}}\sum_{j'\in N_{A}^{c}\cap N_{i'}}\e\{\xi_{A}^{p}|X_{i}X_{j}X_{i'}X_{j'}|\}\\
		&\leq 2(\e\{\xi_{A}^{p}\}) \cdot \Big(\Big(\sum_{i\in  A_{k}}\sum_{j\in  A_{i}}\|X_{i}\|_{4}\|X_{j}\|_{4} \Big)^{2}+ \Big(\sum_{i\in  A_{k}}\sum_{j\in  N_{i}}\|X_{i}\|_{4}\|X_{j}\|_{4} \Big)^{2}\Big)\\
        &\leq 4(\e\{\xi_{A}^{p}\}) \cdot \Big(\sum_{i\in  A_{k}}\sum_{j\in  A_{i}\cup N_{i}}\|X_{i}\|_{4}\|X_{j}\|_{4} \Big)^{2}.  
 \end{aligned}	
    \end{align}
	Combining (\ref{eq-assume-prop-2}), (\ref{eq-kk-07}) and (\ref{eq-kk-08}), we have
	\begin{align}\label{eq-kk-09}
    \begin{aligned}
        H_{411}&\leq 4(b-a+\alpha)\|\xi_{A}\|_{1}\cdot \frac{1}{\sigma^{3}}\sum_{k \in  N_A^c}\sum_{i\in  A_{k}}\sum_{j\in  A_{i}\cup N_{i}}\|X_{k}\|_{2}\|X_{i}\|_{4}\|X_{j}\|_{4}\\
		&\leq 8(b-a+\alpha)\|\xi_{A}\|_{1}\cdot \frac{\kappa^{2}}{\sigma^{3}}\sum_{k \in  [n]}\|X_{k}\|_{4}^{3}\\
		&\leq 0.02(b-a+\alpha)\|\xi_{A}\|_{4/3}.
    \end{aligned}	
	\end{align}
	For $H_{412}$, by (\ref{eq-assume-prop-2}), (\ref{eq-kk-08}) and H\"{o}lder's inequality,  we have
	\begin{align}\label{eq-kk-10}
        H_{412}&\leq \frac{4c}{\sigma^4}\cdot\sum_{k \in  N_A^c}\sum_{m\in B} \|\xi_{A}X_{k}G_{k}^{2}\|_{4/3} \|X_{m}\|_{4}\nonumber\\
		&\leq \frac{4c}{\sigma^4}\cdot\sum_{k \in  N_A^c}\sum_{m\in B} \big(\e\{\xi_{A}^{4/3}X_{k}^{4}\}\big)^{1/4}\big(\e\{\xi_{A}^{4/3}G_{k}^{4}\}\big)^{1/2} \|X_{m}\|_{4}\nonumber\\
		&\leq \frac{8c}{\sigma^4}\|\xi_{A}\|_{4/3}\cdot\sum_{k \in  N_A^c}\sum_{i\in  A_{k}}\sum_{j\in  A_{i}\cup N_{i}}\|X_{i}\|_{4}\|X_{j}\|_{4}\|X_{k}\|_{4}\cdot\sum_{m\in B}\|X_{m}\|_{4}\\
		&\leq \frac{16c}{\sigma}\|\xi_{A}\|_{4/3}\sum_{m\in B}\|X_{m}\|_{4}\cdot \frac{\kappa^{2}}{\sigma^{3}}\sum_{k\in[n]}\|X_{k}\|_{4}^{3}\nonumber\\
		&\leq \frac{0.1c}{\sigma}\|\xi_{A}\|_{4/3}\sum_{m\in B}\|X_{m}\|_{4}\nonumber\\
		&\leq 0.1\|\xi_{A}\|_{4/3}\delta_{1}.	\nonumber
	\end{align}
	For $H_{413}$, recall that $Q_{A}\leq 1$, and hence, 
	\begin{align}\label{eq-kk-11}
    \begin{aligned}
        		H_{413}&\leq \frac{4c}{\sigma^4}\sum_{k \in  N_A^c}\e\big\{\xi_{A}|X_{k}S_{A}G_{k}^{2}| \big\}\\
		&\leq \frac{4c}{\sigma^4}\sum_{k \in  N_A^c}\big(\e\{\xi_{A}X_{k}^{2}S_{A}^{2}\}\big)^{1/2}\cdot \big(\e\{\xi_{A}G_{k}^{4}\}\big)^{1/2}.
    \end{aligned}
	\end{align}
	Now, we  provide an upper bound for $\e\{\xi_{A}X_{k}^{2}S_{A}^{2}\}$. By (\ref{eq-tt-9.2}), (\ref{eq-assume-prop-2}), (\ref{eq-pp-2.2}) and Lemma \ref{lem-second-moment-for-S}, we have
	\begin{align}\label{eq-kk-11.1}
		\e\{\xi_{A}^{p}(S_{A}-Y_{k,A})^{2}\}
		&\leq 2\e\{\xi_{A}^{p}S_{A}^{2}\}+2\e\{\xi_{A}^{p}\}\cdot \Big(\kappa^{3}\sum_{l\in [n]}\|X_{l}\|_{4}^{4}\Big)^{1/2}
        \leq 3\sigma^{2}\e\{\xi_{A}^{p}\},
	\end{align}
	which further implies
	\begin{align}\label{eq-kk-12}
    \begin{aligned}
        		\e\{\xi_{A}X_{k}^{2}S_{A}^{2}\}&\leq 2\e\{\xi_{A}X_{k}^{2}(S_{A}-Y_{k,A})^{2}\}+2\e\{\xi_{A}X_{k}^{2}Y_{k,A}^{2}\}\\
		&\leq 2\e\{X_{k}^{2}\} \e\{\xi_{A}(S_{A}-Y_{k,A})^{2}\}+2\kappa\sum_{l\in A_{k}}\e\{\xi_{A}X_{k}^{2}X_{l}^{2}\}\\
		&\leq 6\sigma^{2}\|\xi_{A}\|_{1}\e \{X_{k}^{2}\}+\kappa^{2}\|\xi_{A}\|_{1}\|X_{k}\|_{4}^{4}+\kappa\sum_{l\in A_{k}}\|\xi_{A}\|_{1}\|X_{l}\|_{4}^{4}.
    \end{aligned}
	\end{align}
	Substituting (\ref{eq-kk-08}) and (\ref{eq-kk-12}) into  (\ref{eq-kk-11}) yields
	\begin{align}\label{eq-kk-13}
        H_{413}&\leq \frac{20c\|\xi_{A}\|_{1}}{\sigma^3}\sum_{k \in  N_A^c}\sum_{i\in  A_{k}}\sum_{j\in  A_{i}\cup N_{i}}\|X_{k}\|_{2}\|X_{i}\|_{4}\|X_{j}\|_{4}\nonumber\\
		&\quad+ \frac{8c\kappa\|\xi_{A}\|_{1}}{\sigma^4}\sum_{k \in  N_A^c}\sum_{i\in  A_{k}}\sum_{j\in  A_{i}\cup N_{i}}\|X_{k}\|_{4}^{2}\|X_{i}\|_{4}\|X_{j}\|_{4}\nonumber\\
&\quad+\frac{8c\kappa^{1/2}\|\xi_{A}\|_{1}}{\sigma^4}\sum_{k \in  N_A^c}\Big(\sum_{l\in A_{k}}\|X_{l}\|_{4}^{4}\Big)^{1/2} \sum_{i\in  A_{k}}\sum_{j\in  A_{i}\cup N_{i}}\|X_{i}\|_{4}\|X_{j}\|_{4}\\
		&\leq \frac{40c\kappa^{2}\|\xi_{A}\|_{1}}{\sigma^3}\sum_{i\in[n]}\|X_{i}\|_{4}^{3}+\frac{32c\kappa^{3}\|\xi_{A}\|_{1}}{\sigma^4}\sum_{i\in[n]}\|X_{i}\|_{4}^{4}\nonumber\\
		&\leq 40\|\xi_{A}\|_{4/3}\delta_{2}+0.1\|\xi_{A}\|_{4/3}\delta_{3}.\nonumber
	\end{align}
	It follows from (\ref{eq-kk-06}), (\ref{eq-kk-09}), (\ref{eq-kk-10}) and (\ref{eq-kk-13}) that
	\begin{align}\label{eq-kk-14}
		H_{41}\leq 0.02(b-a+\alpha)\|\xi_{A}\|_{4/3}+0.1\|\xi_{A}\|_{4/3}\delta_{1}+40\|\xi_A\|_{4/3} \delta_{2}+0.1\|\xi_A\|_{4/3}\delta_{3}.
	\end{align}
	For $H_{42}$, combining $\bar{V}_{A}^{(k)}\geq \sigma/2$ and (\ref{eq-p-10}), we obtain  
	\begin{align}\label{eq-pp7}
		|H_{42}|&\leq \frac{1}{\sigma}\sum_{k \in  N_A^c} \e \big\{\xi_{A}|X_{k}|\cdot \big(\big|\eta_{A,B}-\eta_{A,B}^{(k)}\big|+\big|\zeta_{A,B}-\zeta_{A,B}^{(k)}\big|\big)\big\}.
	\end{align}
	By the definition of $\eta_{A,B}$, $\eta_{A,B}^{(k)}$, $\zeta_{A,B}$ and $\eta_{A,B}^{(k)}$, we have
	\begin{align}\label{eq-kk-15}
        &\max\big\{|\eta_{A,B}-\eta_{A,B}^{(k)}|,|\zeta_{A,B}-\zeta_{A,B}^{(k)}|\big\}\nonumber\\
    &\quad\leq c\sum_{m\in B\cap A_{k}}\frac{|X_{m}|}{\sigma}+\frac{c}{\sigma}|S_{A}Q_{A}-S_{A}^{(k)}Q_{A}^{(k)}|\\
		&\quad\leq c\sum_{m\in B\cap A_{k}}\frac{|X_{m}|}{\sigma}+\frac{c|Y_{k,A}|}{\sigma}\cdot Q_{A}+\frac{c|S_{A}-Y_{k,A}|}{\sigma}|Q_{A}-Q_{A}^{(k)}|.\nonumber
	\end{align}
	For the last term of (\ref{eq-kk-15}),
   using the inequality  $|\min\{1,x\}-\min\{1,y\}|\leq |x-y|$, we obtain 
	\begin{align}\label{eq-kk-16}
    \begin{aligned}
        |Q_{A}-Q_{A}^{(k)}|&\leq |T_{A}-T_{A}^{(k)}|\\
		&=\frac{1}{\sigma}\Bigg|\sqrt{\sum_{i\in N_{A}}\sum_{j\in A_{i}}|X_{i}X_{j}|+\sum_{i\in N_{A}}\sum_{j\in N_{i}}|X_{i}X_{j}|}\\
        &\quad-\sqrt{\sum_{i\in N_{A}\cap A_{k}^{c}}\sum_{j\in A_{i}\cap A_{k}^{c}}|X_{i}X_{j}|+\sum_{i\in N_{A}\cap A_{k}^{c}}\sum_{j\in N_{i}\cap A_{k}^{c}}|X_{i}X_{j}|} \Bigg|\\
        &\leq\frac{1}{\sigma}\Delta_{k},
    \end{aligned}
	\end{align}
    where 
    \begin{align*}
       \Delta_{k}^{2}&=\sum_{i\in N_{A}\cap A_{k}}\sum_{j\in A_{i}}|X_{i}X_{j}|+\sum_{i\in N_{A}}\sum_{j\in A_{i}\cap A_{k}}|X_{i}X_{j}|\\
       &\quad+\sum_{i\in N_{A}\cap A_{k}}\sum_{j\in N_{i}}|X_{i}X_{j}|+\sum_{i\in N_{A}}\sum_{j\in N_{i}\cap A_{k}}|X_{i}X_{j}|. 
    \end{align*}
	Combining (\ref{eq-pp7})--(\ref{eq-kk-16}) and by the fact that $Q_{A}\leq T_{A}$, we obtain
	\begin{align}\label{eq-kk-17}
    \begin{aligned}
        |H_{42}|
		&\leq \frac{2c}{\sigma^{2}}\sum_{k \in  N_A^c}\sum_{m\in B\cap A_{k}} \e \{\xi_{A}|X_{k}X_{m}|\}+\frac{2c}{\sigma^{2}}\sum_{k \in  N_A^c} \e \{\xi_{A}|X_{k}Y_{k,A}|T_{A}\}\\
		&\quad+\frac{2c}{\sigma^{3}}\sum_{k \in  N_A^c} \e\{\xi_{A}X_{k}|S_{A}-Y_{k,A}|\cdot|\Delta_{k}|\}\\
		&\leq H_{421}+H_{422}+H_{423},
    \end{aligned}	
	\end{align}
    where 
    \begin{align*}
H_{421}&=\frac{2c}{\sigma^{2}}\|\xi_{A}\|_{4/3}\sum_{k \in  N_A^c}\sum_{m\in B\cap A_{k}} \|X_{k}\|_{4/3} \|X_{m}\|_{4},\\
H_{422}&=\frac{2c}{\sigma^{2}}\|T_{A}\|_{4}\cdot\Big\|\sum_{k \in  N_A^c}\xi_{A}|X_{k}Y_{k,A}|\Big\|_{4/3},\\H_{423}&=\frac{2c}{\sigma^{3}}\sum_{k \in  N_A^c} \|X_{k}\|_{4/3}\|\xi_{A}(S_{A}-Y_{k,A})\|_{4/3}\cdot\|\Delta_{k}\|_{4}.
    \end{align*}
	For $H_{421}$, by (\ref{eq-tt-9.1}), we have
	\begin{align}\label{eq-kk-18}
		H_{421}\leq \frac{2c}{\sigma^{2}}\|\xi_{A}\|_{4/3}\sum_{m\in B}\sum_{k\in N_{m}} \|X_{m}\|_{4}\|X_{k}\|_{4}\leq 0.3\|\xi_{A}\|_{4/3}\delta_{1}.
	\end{align}
	To bound $H_{422}$, we need to obtain an upper bound for $\Big\|\xi_{A}\sum_{k \in  N_A^c}|X_{k}Y_{k,A}|\Big\|_{4/3}$. Note that for any $p\geq 0$, we have
	\begin{align}\label{eq-kk-19}
        &\e\Big\{\xi_{A}^{p}\Big(\sum_{k \in  N_A^c}|X_{k}Y_{k,A}|\Big)^{2}\Big\}\nonumber\\&\quad\leq \sum_{k\in N_{A}^{c}}\sum_{l\in A_{k}\cap N_{A}^{c}}\sum_{k'\in  N_{A}^{c}}\sum_{l'\in A_{k'}\cap N_{A}^{c}}\e\big\{ \xi_{A}^{p}|X_{k}X_{l}X_{k'}X_{l'}|\big\}\nonumber\\
		&\quad\leq \sum_{k\in  N_{A}^{c}}\sum_{l\in A_{k}\cap N_{A}^{c}}\sum_{k'\in A_{kl}^{c}\cap N_{A}^{c}}\sum_{l'\in A_{k'}\cap A_{kl}^{c}\cap N_{A}^{c}}\e \big\{\xi_{A}^{p}|X_{k}X_{l}X_{k'}X_{l'}|\big\}\\
		&\quad\quad+\sum_{k\in  N_{A}^{c}}\sum_{l\in A_{k}\cap N_{A}^{c}}\sum_{k'\in A_{kl}\cap N_{A}^{c}}\sum_{l'\in A_{k'}\cap N_{A}^{c}}\e \big\{\xi_{A}^{p}|X_{k}X_{l}X_{k'}X_{l'}|\big\}\nonumber\\
		&\quad\quad+\sum_{k\in  N_{A}^{c}}\sum_{l\in A_{k}\cap N_{A}^{c}}\sum_{k'\in  N_{A}^{c}}\sum_{l'\in A_{k'}\cap A_{kl}\cap N_{A}^{c}}\e \big\{\xi_{A}^{p}|X_{k}X_{l}X_{k'}X_{l'}|\big\}\nonumber\\
		&\quad\leq \sum_{(k,l)\in D_{A,1}}\sum_{(k',l')\in D_{A,1}}\e \big\{\xi_{A}^{p}|X_{k}X_{l}X_{k'}X_{l'}|\big\}\mathbf{1}(k'\in A_{kl}^{c})\mathbf{1}(l'\in A_{kl}^{c})\nonumber\\
&\quad\quad+\e\{\xi_{A}^{p}\}\cdot(\kappa^{3}+\kappa\tau)\sum_{k\in [n]}\|X_{k}\|_{4}^{4}.\nonumber
	\end{align}
    For the first term of (\ref{eq-kk-19}), recall that  (\ref{eq-tt-11.6})--(\ref{eq-tt-11.8}) implie that
	\begin{align}\label{eq-kk-19.1}
		 \sum_{(k,l)\in D_{A,3}}\sum_{(k',l')\in D_{A,1}}\e \big\{\xi_{A}^{p}|X_{k}X_{l}X_{k'}X_{l'}|\mathbf{1}(k'\in A_{kl}^{c})\mathbf{1}(l'\in A_{kl}^{c})\big\}\leq 0.2\lambda\sigma^{4}\e\{\xi_{A}^{p}\}.
	\end{align}
    Moreover, if $(k,l)\in D_{A,2}$, $k'\in A_{kl}^{c}$ and $l'\in A_{kl}^{c}$ then $(X_{k},X_{l})\perp\!\!\!\perp (\xi_{A},X_{k'},X_{l'})$, which implies
    \begin{align}\label{eq-kk-19.01}
    \begin{aligned}
                &\sum_{(k,l)\in D_{A,2}}\sum_{(k',l')\in D_{A,1}}\e \big\{\xi_{A}^{p}|X_{k}X_{l}X_{k'}X_{l'}|\mathbf{1}(k'\in A_{kl}^{c})\mathbf{1}(l'\in A_{kl}^{c})\big\}\\
        &\qquad\leq  \sum_{(k,l)\in D_{A,1}}\sum_{(k',l')\in D_{A,1}}\e\{|X_{k}X_{l}|\}\cdot \e\big\{\xi_{A}^{p}|X_{k'}X_{l'}|\big\}\leq \lambda^{2}\sigma^{4}\e\{\xi_{A}^{p}\}.
    \end{aligned}
    \end{align}
	Combining (\ref{eq-kk-19}), (\ref{eq-kk-19.1})  and  (\ref{eq-kk-19.01}) yields
	\begin{align*}
		\e\Big\{\xi_{A}^{p}\Big(\sum_{k \in  N_A^c}|X_{k}Y_{k}|\Big)^{2}\Big\}\leq 1.21\lambda^{2}\sigma^{4}\e\{\xi_{A}^{p}\}.
	\end{align*}
	Consequently,
	\begin{align}\label{eq-kk-20}
    \begin{aligned}
        		\Big\|\xi_{A}\sum_{k \in  N_A^c}|X_{k}Y_{k}|\Big\|_{4/3}&\leq \big(\e \{\xi_{A}^{4/3}\}\big)^{1/4}\cdot\Big(\e\Big\{\xi_{A}^{4/3}\Big(\sum_{k \in  N_A^c}|X_{k}Y_{k}|\Big)^{2}\Big\}\Big)^{1/2}\\
                &\leq 1.1\|\xi_{A}\|_{1}\sigma^{2}\lambda.
    \end{aligned}
	\end{align}
	Recall that, as shown in (\ref{eq-pp-06}), we have already established 
	\begin{align*}
		\|T_{A}\|_{4}\leq \frac{\sqrt{2}}{\sigma} \Big(\sum_{k\in N_{A}}\sum_{l\in N_{k}\cup A_{k}}\|X_{k}\|_{4}\|X_{l}\|_{4} \Big)^{1/2}.
	\end{align*}
	Combining this with (\ref{eq-kk-20}), we obtain
	\begin{align}\label{eq-kk-22}
		H_{422}\leq 3.2\|\xi_{A}\|_{4/3} \delta_{4}.
	\end{align}
	For $H_{423}$, by H\"{o}lder's inequality and (\ref{eq-kk-11.1}), we have 
	\begin{align}\label{eq-kk-23}
		\|\xi_{A}(S_{A}-Y_{k,A})\|_{4/3}\leq \big(\e \{\xi_{A}^{4/3}\}\big)^{1/4} \big( \e\{\xi_{A}^{4/3}(S_{A}-Y_{k,A})^{2}\}\big)^{1/2}\leq 1.8\sigma\|\xi_{A}\|_{4/3}.
	\end{align}
	As for $\|\Delta_{k}\|_{4}$, with similar arguments as that leading to (\ref{eq-pp-06}), we have
	\begin{align}\label{eq-kk-24}
    \begin{aligned}
        	\|\Delta_{k}\|_{4}&\leq 1.7 \Big(\sum_{i\in N_{A}\cap A_{k}}\sum_{j\in A_{i}\cup N_{i}}\|X_{i}\|_{4}\|X_{j}\|_{4}\Big)^{1/2}\\
            &\quad+1.7 \Big(\sum_{i\in N_{A}}\sum_{j\in (A_{i}\cup N_{i})\cap A_{k}}\|X_{i}\|_{4}\|X_{j}\|_{4}\Big)^{1/2}.
    \end{aligned}
	\end{align}
	By Cauchy's inequality, we have
	\begin{align}\label{eq-kk-25}
    \begin{aligned}
        		&\frac{c}{\sigma^{2}}\sum_{k\in N_{A}^{c}}\|X_{k}\|_{2}\Big(\sum_{i\in N_{A}\cap A_{k}}\sum_{j\in A_{i}\cup N_{i}}\|X_{i}\|_{4}\|X_{j}\|_{4}\Big)^{1/2}\\
		&\quad\leq \frac{c}{\sigma^{2}}\Big(\sum_{k\in N_{A}^{c}}\|X_{k}\|_{2}^{2}\Big)^{1/2}\cdot \Big(\sum_{k\in N_{A}^{c}}\sum_{i\in N_{A}\cap A_{k}}\sum_{j\in A_{i}\cup N_{i}}\|X_{i}\|_{4}\|X_{j}\|_{4}\Big)^{1/2}\\
		&\quad\leq \frac{c}{\sigma^{2}}\Big(\kappa\sum_{k\in N_{A}^{c}}\|X_{k}\|_{2}^{2}\Big)^{1/2}\cdot \Big(\sum_{i\in N_{A}}\sum_{j\in A_{i}\cup N_{i}}\|X_{i}\|_{4}\|X_{j}\|_{4}\Big)^{1/2}\\
		&\quad\leq \delta_{4}.
    \end{aligned}
	\end{align}
	Similarly, 
	\begin{align}\label{eq-kk-26}
	&\frac{c}{\sigma^{2}}\sum_{k\in N_{A}^{c}}\|X_{k}\|_{2}\Big(\sum_{i\in N_{A}}\sum_{j\in (A_{i}\cup N_{i})\cap A_{k}}\|X_{i}\|_{4}\|X_{j}\|_{4}\Big)^{1/2}\leq \delta_{4}.
	\end{align}
	It follows from (\ref{eq-kk-24}), (\ref{eq-kk-25}) and (\ref{eq-kk-26}) that
	\begin{align}\label{eq-kk-28}
		\frac{c}{\sigma^{2}}\sum_{k\in N_{A}^{c}}\|X_{k}\|_{2}\| \Delta_{k}\|_{4}\leq 3.4\delta_{4}.
	\end{align}
	By (\ref{eq-kk-23}) and (\ref{eq-kk-28}), we have
	\begin{align}\label{eq-kk-29}
		H_{423}\leq \frac{3.6c}{\sigma^{2}}\|\xi_{A}\|_{4/3}\sum_{k \in  N_A^c} \|X_{k}\|_{2}\|\Delta_{k}\|_{4}\leq 13\|\xi_{A}\|_{4/3} \delta_{4}.
	\end{align}
	Combining (\ref{eq-kk-17}), (\ref{eq-kk-18}), (\ref{eq-kk-22}) and (\ref{eq-kk-29}) yields
	\begin{align}\label{eq-kk-30}
		H_{42}\leq 0.3\|\xi_{A}\|_{4/3}\delta_{1}+17\|\xi_{A}\|_{4/3}\delta_{4}.
	\end{align}
	For $H_{43}$, using $\|f'_{\eta_{A,B}, \zeta_{A,B}}\|_{\infty}\leq 1$ together with  (\ref{eq-kk-04}), we have
	\begin{align*}
        	H_{43}&\leq \frac{2}{\sigma}\sum_{k\in N_{A}^{c}}\e\Big\{ \xi_{A}|X_{k}(S_{A}-Y_{k})|\cdot \Big|\frac{1}{\bar{V}_{A}}-\frac{1}{\bar{V}_{A}^{(k)}}\Big|\Big\}\\
		&\leq \frac{8}{\sigma^{4}}\sum_{k\in N_{A}^{c}}\e\Big\{ \xi_{A}|X_{k}(S_{A}-Y_{k,A})|G_{k}^{2}\Big\}\\
		&\leq \frac{8}{\sigma^4}\sum_{k \in  N_A^c}\big(\e\{\xi_{A}X_{k}^{2}(S_{A}-Y_{k,A})^{2}\}\big)^{1/2}\cdot \big(\e\{\xi_{A}G_{k}^{4}\}\big)^{1/2}.
	\end{align*}
    Combining this with   (\ref{eq-kk-08}) and (\ref{eq-kk-11.1}) yields 
    \begin{align}\label{eq-kk-31}
		H_{43}&\leq \frac{28\|\xi_{A}\|_{1}}{\sigma^3}\sum_{k \in  N_A^c}\sum_{i\in A_{k}}\sum_{j\in A_{i}\cup N_{i}}\|X_{k}\|_{4}\|X_{i}\|_{4}\|X_{i}\|_{4}\nonumber\\
		&\leq \frac{56\|\xi_{A}\|_{1}\kappa^{2}}{\sigma^3}\sum_{k \in [n]}\|X_{k}\|_{4}^{3}\\
		&\leq 56\|\xi_{A}\|_{4/3}\delta_{2}.\nonumber
    \end{align}
	Substituting (\ref{eq-kk-14}), (\ref{eq-kk-30}) and (\ref{eq-kk-31}) into (\ref{eq-kk-01}) yields 
	\begin{align}\label{eq-kk-32}
		|H_{4}|\leq \|\xi_{A}\|_{4/3}\big(0.02(b-a+\alpha)+0.4\delta_{1}+96\delta_{2}+0.1\delta_{3}+17\delta_{4}\big).
	\end{align}
	\noindent
	{\it (v) Upper bound of $H_{2}$.} Now, it suffices to bound $H_2$, for which we use the recursive argument.
	By the definition of $f_{\eta_{A,B},\zeta_{A,B}}$, we have
	\begin{align*}
    H_2
		&= \sum_{k \in  N_A^c} \mathbb{E}\Big\{\int_{\R}\frac{\xi_{A}}{\bar{V}_{A}^{2}}\cdot \Big(f_{\eta_{A,B}, \zeta_{A,B}}^{\prime}\Big(\frac{S_{A}+t}{\bar{V}_{A}}\Big)-f_{\eta_{A,B}, \zeta_{A,B}}^{\prime}\Big( \frac{S_{A}}{\bar{V}_{A}} \Big)\Big) \widehat{M} _{k}(t)\mathrm{d} t\Big\} \\
		&\leq H_{21}+H_{22},
	\end{align*}
    where 
    \begin{align*}
        H_{21}&=\sum_{k \in  N_A^c} \mathbb{E}\left\{\frac{\xi_A}{\alpha \bar{V}_{A}^{3}} \int_{\R} \int_{t\wedge 0}^{t\vee 0} \mathbf{1}\Big(\eta_{A,B}-\alpha \leq \frac{S_{A}+s}{\bar{V}_{A}} \leq \eta_{A,B}\Big) |\widehat{M}_k(t)| d s d t\right\}, \\
		H_{22}&=\sum_{k \in  N_A^c} \mathbb{E}\left\{\frac{\xi_A}{\alpha \bar{V}_{A}^{3}} \int_{\R} \int_{t\wedge 0}^{t\vee 0} \mathbf{1}\Big(\zeta_{A,B} \leq \frac{S_{A}+s}{\bar{V}_{A}} \leq \zeta_{A,B}+\alpha\Big) |\widehat{M}_k(t)| d s d t\right\}.
    \end{align*}
	For the indicator function in $H_{21}$, note that $$|s| \leq \left|Y_{k,A}\right| \leq \sum_{l \in N_{A}^{c} \cap A_k}\left|X_l\right|\leq \sum_{l \in A_k}|X_{l}|\quad \text{and}\quad \bar{V}_{A}\geq \sigma/2.$$ Hence, we have
	\begin{align}\label{eq-yy-01}
		& \mathbf{1}\Big(\eta_{A,B}-\alpha \leq \frac{S_{A}+s}{\bar{V}_{A}} \leq \eta_{A,B}\Big)
		\leq \mathbf{1}(a-\alpha-U'_k \leq S_{A}/\bar{V}_{A} \leq a+U'_k),
	\end{align}
	where
	$$
	U'_k=(c+2)\sum_{l \in B\cup A_{k}}\frac{|X_{l}|}{\sigma}+\frac{c |S_{A}|Q_{A}}{\sigma}.
	$$
	It follows from (\ref{eq-yy-01}) that
	\begin{align*}
        		\left|H_{21}\right| & \leq \sum_{k \in  N_A^c} \mathbb{E}\left\{\frac{8\xi_A}{\alpha \sigma^{3}} \int_{\R} \int_{t\wedge 0}^{t\vee 0} \mathbf{1}(a-\alpha-U'_k \leq S_{A}/\bar{V}_{A} \leq a+U'_k) |\widehat{M}_k(t)| d s d t\right\} \\
		& \leq \frac{4}{\sigma^{3}\alpha} \sum_{k \in  N_A^c} \mathbb{E}\big\{\xi_A\mathbf{1}\left(a-\alpha-U'_k \leq S_{A}/\bar{V}_{A} \leq a+U'_k\right)|X_k Y_{k,A}^2 |\big\}\\
		 & \leq \frac{4\kappa}{\sigma^{3}\alpha} \sum_{k \in  N_A^c}\sum_{l\in N_{A}^{c}\cap A_{k}} \mathbb{E}\big\{\xi_A|X_k X_l^2 |\mathbf{1}\left(a-\alpha-U'_k \leq S_{A}/\bar{V}_{A} \leq a+U'_k\right)\big\}\\
		&\leq H_{211}+H_{212},
	\end{align*}
    where 
    \begin{align*}
       H_{211}&= \frac{4\kappa^{2}}{3\sigma^{3}\alpha} \sum_{k \in  N_A^c} \mathbb{E}\big\{\xi_A|X_k^{3} |\mathbf{1}(a-\alpha-U'_k \leq S_{A}/\bar{V}_{A} \leq a+U'_k)\big\},\\
		H_{212}&=\frac{8\kappa^{2}}{3\sigma^{3}\alpha} \sum_{k \in  N_A^c}\sum_{l\in N_{A}^{c}\cap A_{k}} \mathbb{E}\big\{\xi_A|X_l^{3}|\mathbf{1}(a-\alpha-U'_k \leq S_{A}/\bar{V}_{A} \leq a+U'_k)\big\}.
    \end{align*}
	We use recursive arguments to bound $H_{211}$ and $H_{212}$. For $H_{211}$, let $A_{1,k}=A\cup \{k\}$, then by the definition of $S_{A}$ and $\bar{V}_{A}$, we have
	\begin{align}\label{eq-yy-03}
    \begin{aligned}
        		\Big|\frac{S_{A}}{\bar{V}_{A}}-\frac{S_{A_{1,k}}}{V_{A_{1,k}}}\Big|&\leq \Big|\frac{S_{A}}{\bar{V}_{A}}-\frac{S_{A_{1,k}}}{\bar{V}_{A}}\Big|+\Big|\frac{S_{A_{1,k}}}{\bar{V}_{A}}-\frac{S_{A_{1,k}}}{V_{A_{1,k}}}\Big|\\
		&\leq 2\sum_{m\in N_{k}}\frac{|X_{m}|}{\sigma}+|S_{A_{1,k}}|\cdot \Big|\frac{1}{\bar{V}_{A}}- \frac{1}{V_{A_{1,k}}} \Big|.
    \end{aligned}
	\end{align}
    By the inequality $|\sqrt{x}-\sqrt{y}|\leq \sqrt{|x-y|}$ for any $x,y\geq 0$, we have
 	\begin{align}\label{eq-l-08}
 		|\psi(x)-\psi(y)|=\Big|\sqrt{\psi^{2}(x)}-\sqrt{\psi^{2}(y)}\Big|\leq \sqrt{|\psi^{2}(x)-\psi^{2}(y)|}\leq \sqrt{|x-y|}.
 	\end{align}
	Note that by the definition of $N_{A}$, we have $N_{A}^{c}\backslash N_{A_{1,k}}^{c}\subset N_{k}$. Combining this with (\ref{eq-l-08}) yields 
	\begin{align}\label{eq-yy-04}
    \begin{aligned}
        		|\bar{V}_{A}-V_{A_{1,k}}|&\leq \sqrt{\Bigg|\sum_{i\in N_{A}^{c}}\sum_{j\in N_{A}^{c}\cap A_{i}} X_{i}X_{j}-\sum_{i\in N_{A_{1,k}}^{c}}\sum_{j\in N_{A_{1,k}}^{c}\cap A_{i}} X_{i}X_{j}\Bigg|}\\
		&\leq \sqrt{\sum_{i\in N_{k}}\sum_{j\in  A_{i}} |X_{i}X_{j}|+\sum_{i\in N_{k}}\sum_{j\in  N_{i}} |X_{i}X_{j}|}=\sigma T_{k}.
    \end{aligned}
	\end{align}
	By (\ref{eq-yy-04}) and the fact that $\bar{V}_{A}\geq \sigma/2$ and $\bar{V}_{A_{1,k}}\geq \sigma/2$, we have
	\begin{align}\label{eq-yy-05}
		|S_{A_{1,k}}|\cdot \Big|\frac{1}{\bar{V}_{A}}- \frac{1}{V_{A_{1,k}}} \Big|\leq |S_{A_{1,k}}|\min\Big\{\frac{4}{\sigma}, \frac{4T_{k}}{\sigma}\Big\}=\frac{4|S_{A_{1,k}}|}{\sigma}\cdot Q_{k}.
	\end{align}
	Moreover, note that $Q_{A}\leq 1$, then
	\begin{align}\label{eq-yy-06}
    \begin{aligned}
        		\frac{c|S_{A}|Q_{A}}{\sigma}&\leq \Big|\frac{c|S_{A}|Q_{A}}{\sigma}-\frac{c|S_{A_{1,k}}|Q_{A}}{\sigma} \Big|+ \frac{c|S_{A_{1,k}}|Q_{A}}{\sigma}\\
		&\leq c\sum_{m\in N_{k}}\frac{|X_{m}|}{\sigma}+ \frac{c|S_{A_{1,k}}|Q_{A}}{\sigma}.
    \end{aligned}
	\end{align}
	In addition, note that $\{k\}\subset A_{1,k}$ and $A\subset A_{1,k}$, then
	\begin{align}\label{eq-yy-006}
		T_{A}\leq  T_{A_{1,k}}\quad \text{and}\quad  T_{k}\leq  T_{A_{1,k}},
	\end{align}
	which further implies
	\begin{align}\label{eq-yy-07}
		Q_{A}=\min\{1, T_{A}\}\leq \min\{1, T_{A_{1,k}} \}=Q_{A_{1,k}}\quad\text{and}\quad  Q_{k}\leq Q_{A_{1,k}}.
	\end{align}
	Combining (\ref{eq-yy-03})--(\ref{eq-yy-07}) yields
	\begin{align}\label{eq-yy-08}
		\Big|\frac{S_{A}}{\bar{V}_{A}}-\frac{S_{A_{1,k}}}{V_{A_{1,k}}}\Big|\leq 2\sum_{m\in N_{k}}\frac{|X_{m}|}{\sigma}+4\frac{|S_{A_{1,k}}|Q_{A_{1,k}}}{\sigma}
	\end{align}
	and
	\begin{align}\label{eq-yy-09}
		\frac{c|S_{A}|Q_{A}}{\sigma}\leq c\sum_{m\in N_{k}}\frac{|X_{m}|}{\sigma}+\frac{c|S_{A_{1,k}}|Q_{A_{1,k}}}{\sigma}.
	\end{align}
	It follows from (\ref{eq-yy-08}) and (\ref{eq-yy-09}) that
	\begin{align*}
        		&\mathbf{1}(S_{A}/\bar{V}_{A} \leq a+U'_k)\\
		&=\mathbf{1}\Big( \frac{S_{A}}{\bar{V}_{A}} \leq a+(c+2)\sum_{l \in B\cup A_{k}}\frac{|X_{l}|}{\sigma}+\frac{c |S_{A}|Q_{A}}{\sigma}\Big)\\
		&\leq \mathbf{1}\Big( \frac{S_{A_{1,k}}}{V_{A_{1,k}}} \leq a+2\sum_{m\in N_{k}}\frac{|X_{m}|}{\sigma}+(c+2)\sum_{l \in B\cup A_{k}}\frac{|X_{l}|}{\sigma}+\frac{c |S_{A}|Q_{A}}{\sigma}+\frac{4|S_{A_{1,k}}| Q_{A_{1,k}}}{\sigma}\Big)\\
		&\leq \mathbf{1}\Big( \frac{S_{A_{1,k}}}{V_{A_{1,k}}} \leq a+(2c+4)\sum_{m \in B\cup A_{k}\cup N_{k}}\frac{|X_{m}|}{\sigma}+\frac{(c+4)|S_{A_{1,k}}| Q_{A_{1,k}}}{\sigma}\Big)\\
		&=\mathbf{1}(S_{A_{1,k}}/V_{A_{1,k}} \leq a+\tilde{U}_k),
	\end{align*}
	where
	\[
	\tilde{U}_k=(2c+4)\sum_{m \in B\cup A_{k}\cup N_{k}}\frac{|X_{m}|}{\sigma}+\frac{(2c+4)|S_{A_{1,k}}| Q_{A_{1,k}}}{\sigma}.
	\]
	Similarly,
	\begin{align*}
		\mathbf{1}(S_{A}/\bar{V}_{A} \geq a-\alpha-U'_k)\geq \mathbf{1}(S_{A_{1,k}}/V_{A_{1,k}} \geq a-\alpha-\tilde{U}_k),
	\end{align*}
	Combining the two inequalities above, we obtain
	\begin{align}\label{eq-yy-12}
		\mathbf{1}(a-\alpha-U'_k\leq  S_{A}/\bar{V}_{A} \leq a+U'_k)\leq \mathbf{1}(a-\alpha-\tilde{U}_k\leq S_{A_{1,k}}/V_{A_{1,k}} \leq a+\tilde{U}_k).
	\end{align}
	For $H_{211}$, it follows from  the definition of $K$ and (\ref{eq-yy-12}) that
	\begin{align}\label{eq-yy-13}
        H_{211}&=\frac{4\kappa^{2}}{3\sigma^{3}\alpha} \sum_{k \in  N_A^c} \mathbb{E}\big\{\xi_A|X_k^{3} |\mathbf{1}(a-\alpha-U_k \leq S_{A}/\bar{V}_{A} \leq a+U_k)\big\}\nonumber\\
		&\leq \frac{4\kappa^{2}}{3\sigma^{3}\alpha} \sum_{k \in  N_A^c} \mathbb{E}\big\{\xi_A|X_k^{3} |\mathbf{1}(a-\alpha-\tilde{U}_k \leq S_{A_{1,k}}/V_{A_{1,k}} \leq a+\tilde{U}_k)\big\}\nonumber\\
		&\leq \frac{4\kappa^{2}K}{3\sigma^{3}\alpha} \sum_{k \in  N_A^c}\|\xi_{A}X_{k}^{3}\|_{4/3}\Big\{\frac{\alpha}{1500}+ (2c+4)\sum_{m\in B\cup A_{k}\cup N_{k}}\frac{\|X_{m}\|_{4}}{\sigma}\\
&\quad+\frac{\lambda(2c+4)\kappa^2(|A|+1)^2 }{\sigma^3} \sum_{i=1}^n\left\|X_i\right\|_{4}^3 \nonumber\\
&\quad+\frac{\lambda(2c+4)\kappa^{1/2}(\kappa+\tau^{1/2})(|A|+1)^{1 / 2} }{\sigma^2}\big(\sum_{i=1}^n\left\|X_i\right\|_4^4\big)^{1/2}\nonumber\\		&\quad+\lambda(2c+4)\Big(\sum_{m\in N_{A}\cup N_{k}}\sum_{l\in A_{m}\cup N_{m}}\frac{\|X_{m}\|_{4}\|X_{l}\|_{4}}{\sigma^{2}} \Big)^{1/2} \Big\}.\nonumber
	\end{align}
	By Young's inequality, we have
	\begin{align}\label{eq-yy-14}
    \begin{aligned}
        		\frac{\kappa^{2}}{\sigma^{4}}\sum_{k\in N_{A}^{c}}\sum_{m\in A_{k}\cup N_{k}}\|X_{k}\|_{4}^{3}\|X_{m}\|_{4}&\leq \frac{\kappa^{2}}{\sigma^{4}}\sum_{k\in N_{A}^{c}}\sum_{m\in A_{k}\cup N_{k}}\big(\frac{3}{4}\|X_{k}\|_{4}^{4}+\frac{1}{4}\|X_{m}\|_{4}^{4}\big)\\
		&\leq \frac{2\kappa^{3}}{\sigma^{4}}\sum_{k\in [n]} \|X_{k}\|_{4}^{4}.
    \end{aligned}
	\end{align}
	Moreover, applying the basic inequality $ab\leq \kappa a^{2}/2+b^{2}/(2\kappa)$ with 
    \[
    a=\|X_{k}\|_{4}^{2}\quad\text{and}\quad
  b= \Big(\sum_{m\in N_{k}}\sum_{l\in A_{m}\cup N_{m}}\|X_{m}\|_{4}\|X_{l}\|_{4} \Big)^{1/2} \]
    yields that
	\begin{align}
  \label{eq-yy-16}
  \begin{aligned}
      		&\kappa^{2}\sum_{k\in N_{A}^{c}}\|X_{k}\|_{4}^{3}\cdot \Big(\sum_{m\in N_{k}}\sum_{l\in A_{m}\cup N_{m}}\|X_{m}\|_{4}\|X_{l}\|_{4} \Big)^{1/2}\\
		&\leq \frac{\kappa^{3}}{2}\sum_{k\in N_{A}^{c}}\|X_{k}\|_{4}^{4}+\frac{\kappa}{2}\sum_{k\in N_{A}^{c}}\sum_{m\in N_{k}}\sum_{l\in A_{m}\cup N_{m}}\|X_{k}\|_{4}^{2}\|X_{m}\|_{4}\|X_{l}\|_{4}\\
		&\leq \frac{\kappa^{3}}{2}\sum_{k\in N_{A}^{c}}\|X_{k}\|_{4}^{4}+\frac{\kappa}{2}\sum_{k\in N_{A}^{c}}\sum_{m\in N_{k}}\sum_{l\in A_{m}\cup N_{m}}\Big(\frac{1}{2}\|X_{k}\|_{4}^{4}+\frac{1}{4}\|X_{m}\|_{4}^{4}+\frac{1}{4}\|X_{l}\|_{4}^{4}\Big)\\
		&\leq 1.5\kappa^{3}\sum_{k\in [n]} \|X_{k}\|_{4}^{4}.
  \end{aligned}
	\end{align}
	It follows from (\ref{eq-yy-13})--(\ref{eq-yy-16}), we have
	\begin{align*}
        		H_{211}&\leq \frac{4\kappa^{2}K}{3\sigma^{3}\alpha} \sum_{k \in  N_A^c}\|\xi_{A}\|_{4/3}\|X_{k}\|_{4}^{3}\Big\{\frac{\alpha}{1500}+6\delta_{1}+24\delta_{2}+9\delta_{3}+6\delta_{4}\Big\}\\
		&\quad+\frac{28\lambda\kappa^{3}K}{\sigma^{4}\alpha}\|\xi_{A}\|_{4/3} \sum_{k \in  [n]}\|X_{k}\|_{4}^{4}.
	\end{align*}
	Recalling that
	\begin{align*}
\alpha=1500\left(\delta_{2}+\delta_{3}\right)\quad\text{and}\quad c\geq 1,\ |A|\geq 1,
	\end{align*}
	then
	\begin{align*}
        		H_{211}&\leq \frac{K \|\xi_{A}\|_{4/3} }{1125}\cdot \big( \delta_{2}+6\delta_{1}+24\delta_{2}+9\delta_{3}+6\delta_{4}\big)+\frac{7K \|\xi_{A}\|_{4/3} }{375}\delta_{3}\\
		&\leq \frac{2K \|\xi_{A}\|_{4/3} }{75}\cdot \big(\delta_{1}+\delta_{2}+\delta_{3}+\delta_{4}\big).
	\end{align*}
	Similarly, we have
	\begin{align*}
		H_{212}\leq \frac{4K \|\xi_{A}\|_{4/3} }{75}\cdot \big(\delta_{1}+\delta_{2}+\delta_{3}+\delta_{4}\big),
	\end{align*}
	and hence
	\begin{align}\label{eq-yy-20}
		H_{21}\leq \frac{2K \|\xi_{A}\|_{4/3} }{25}\cdot \big(\delta_{1}+\delta_{2}+\delta_{3}+\delta_{4}\big).
	\end{align}
	With similar argument as that leading to (\ref{eq-yy-20}), we can obtain the same upper bound for $H_{22}$. Then,
	\begin{align}\label{eq-yy-21}
		H_{2}\leq \frac{4K \|\xi_{A}\|_{4/3} }{25}\cdot \big(\delta_{1}+\delta_{2}+\delta_{3}+\delta_{4}\big).
	\end{align}
	It follows from (\ref{eq-pp-08}), (\ref{eq-pp-10}), (\ref{eq-pp-11}), (\ref{eq-kk-32}) and (\ref{eq-yy-21}) that
	\begin{align*}
      		&0.38\mathbb{E}\left\{\xi_A \mathbf{1}\left(\eta_{A,B} \leq V \leq \zeta_{A,B}\right)\right\}\\
		&\quad\quad\leq \|\xi_{A}\|_{4/3}\cdot\big(1.22(b-a+\alpha)+4\delta_{1}+96\delta_{2}+23\delta_{3}+28\delta_{4}\big)\\
		&\quad\quad\quad+ 0.16K \|\xi_{A}\|_{4/3} \cdot \big(\delta_{1}+\delta_{2}+\delta_{3}+\delta_{4}\big)\\
		&\quad\quad\leq 1926\|\xi_{A}\|_{4/3} \cdot\big((b-a)/1500+\delta_{1}+\delta_{2}+\delta_{3}+\delta_{4}\big)\\
&\quad\quad\quad+0.16K\|\xi_{A}\|_{4/3}\cdot\big((b-a)/1500+\delta_{1}+\delta_{2}+\delta_{3}+\delta_{4}\big),  
	\end{align*}
	which implies
	\begin{align*}
		0.38K\leq 0.16K+1926.
	\end{align*}
   By solving the above inequality, we obtain
	\[
	K\leq 8755.
	\]
	This completes the proof.
\end{proof}

\section{Proofs of Main Results}\label{section-proof-of-main-results}
In this section, we present the proofs of main results. 
\subsection{Proof of Theorem \textup{\ref{thm-main1-1}}} 
In this subsection, we provide the proof of Theorem  \textup{\ref{thm-main1-1}}.   
We begin by establishing a general Berry–Esseen bound for 
$W_{1}.$
\begin{theo}\label{thm-main1}  Under \textup{(LD1)} and \textup{(LD2)}, we have
	\begin{align}\label{eq-thm-01}
		\sup _{z \in \mathbb{R}}|\mathbb{P}(W_{1} \leq z)-\Phi(z)|\leq C(\beta_{1}+\beta_{2}+\beta_{3}),
	\end{align}
	where
	\begin{align*}
		\beta_{1}&= \frac{1}{\sigma^{3}}\sum_{i\in [n]} |A_{i}|^2\left\|X_i\right\|_{4}^3+\frac{1}{\sigma^{3}}\sum_{i\in [n]}\sum_{j\in A_{i}} |A_{i}|\left\|X_j\right\|_{4}^3,\nonumber\\
		\beta_{2}^{2}&=\frac{1}{\sigma^{4}}\sum_{i \in [n]}\sum_{j\in A_{i}}\sum_{k\in A_{ij}}\sum_{l\in A_{k}\cup N_{k}}\|X_i\|_{4}\|X_j\|_{4}\|X_{k}\|_{4}\|X_{l}\|_{4}\nonumber\\
		&\quad+\frac{1}{\sigma^{4}}\sum_{i\in [n]}\sum_{j\in A_{i}\cup N_{i}} |A_{i}|^2\left\|X_i\right\|_{4}^3\|X_{j}\|_{4}+\frac{1}{\sigma^{4}}\sum_{i\in [n]}\sum_{j\in A_{i}}\sum_{k\in A_{i}\cup N_{j}\cup A_{j}} |A_{i}|\left\|X_j\right\|_{4}^3\|X_{k}\|_{4},\nonumber\\
		\beta_{3}^{2}&= \frac{1}{\sigma^{5}}\sum_{i\in [n]} |A_{i}|^2\sum_{j\in A_{i}\cup N_{i}}\sum_{k\in N_{j}}\left\|X_i\right\|_{4}^3\|X_{j}\|_{4}\|X_{k}\|_{4}\nonumber\\
        &\quad+\frac{1}{\sigma^{5}}\sum_{i\in [n]}\sum_{j\in A_{i}}\sum_{k\in A_{i}\cup N_{j}}\sum_{l\in N_{k}} |A_{i}|\left\|X_j\right\|_{4}^3\|X_{k}\|_{4}\|X_{l}\|_{4}\nonumber\\
		&\quad+\frac{1}{\sigma^{5}}\sum_{i\in [n]}\sum_{(j,k)\in D_{i}} |A_{i}|^2\left\|X_i\right\|_{4}^3\|X_{j}\|_{4}\|X_{k}\|_{4}\nonumber\\
        &\quad+\frac{1}{\sigma^{5}}\sum_{i\in [n]}\sum_{j\in A_{i}}\sum_{(k,l)\in D_{j}} |A_{i}|\|X_j\|_{4}^3\|X_{k}\|_{4}\|X_{l}\|_{4}.
	\end{align*}
\end{theo}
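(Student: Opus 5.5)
We prove Theorem \ref{thm-main1} by Stein's method combined with the concentration inequality in Proposition \ref{prop:1}, applied with $A=\{i\}$ or small sets. Without loss of generality $\sigma=1$, and we may assume $\beta_1+\beta_2+\beta_3\le 1/100$, since otherwise the bound is trivial. Fix $z$ and let $f=f_z$ be the solution of the Stein equation $f'(w)-wf(w)=\mathbf 1(w\le z)-\Phi(z)$, so that $\|f\|_\infty\le 1$, $\|f'\|_\infty\le 1$ and $|wf(w)|\le 1$. Write $W=W_1$, $Y_i=\sum_{j\in A_i}X_j$, and $\widehat K_i(t)=X_i\{\mathbf 1(-Y_i\le t\le 0)-\mathbf 1(0<t<-Y_i)\}$, with $\widehat K=\sum_i\widehat K_i$ and $K(t)=\e\widehat K(t)$, so that $\int K\,dt=1$. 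By (LD1), $X_i$ is independent of $W-Y_i$. The usual computation then gives
\[
\e\{Wf(W)\}=\e\int f'(W+t)\widehat K(t)\,dt,
\]
and therefore
\[
\p(W\le z)-\Phi(z)=-\e\Big\{f'(W)\int(\widehat K(t)-K(t))\,dt\Big\}-\e\int\big(f'(W+t)-f'(W)\big)\widehat K(t)\,dt=:-R_1-R_2.
\]

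For the first term, note that $\int(\widehat K-K)\,dt=\sum_i\sum_{j\in A_i}(X_iX_j-\e X_iX_j)$. By (\ref{eq-t-01}) in Lemma \ref{lem-XiYi-moment}, $|R_1|\le 4\gamma^{1/2}$, and $\gamma^{1/2}$ is exactly the first summand of $\beta_2$. This term is routine.

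The main work is $R_2$. Since $f'(w)=wf(w)+\mathbf 1(w\le z)-\Phi(z)$, we split $R_2=R_{21}+R_{22}$.

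For $R_{21}$, which involves the Lipschitz part $w\mapsto wf(w)$, we use $|(w+t)f(w+t)-wf(w)|\le(|w|+1)|t|$. This gives $|R_{21}|\le \sum_i\e\{(|W|+1)|X_iY_i^2|\}$. We then write $|W|\le|W-Y_i-\text{(nearby terms)}|+\text{local terms}$ and decouple, so that $\e\{|W^{(i)}||X_iY_i^2|\}\le C\e|X_iY_i^2|$. Young's inequality with $|A_i|$ weights, namely $|X_iY_i^2|\le|A_i|\sum_{j\in A_i}(|X_i|^3+2|X_j|^3)/3$, yields $\beta_1$. The leftover local pieces produce fourth-moment products, which are controlled by the second and third summands of $\beta_2$; this is the same mechanism as in Lemma \ref{lem-R4}.

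For $R_{22}$, the indicator difference gives
\[
|R_{22}|\le\sum_i\e\big\{|X_i|\,\mathbf 1(z-|Y_i|\le W\le z+|Y_i|)\,|Y_i|\big\}\le\sum_i|A_i|\sum_{j\in A_i}\e\big\{|X_iX_j|\,\mathbf 1(\cdots)\big\}.
\]
We bound $|X_iX_j|\le (X_i^2+X_j^2)/2$ up to Young weights, and then apply Proposition \ref{prop:1} with the choices
\[
A=\{i\}\ (\text{resp. }\{j\}),\qquad \xi_A=X_i^2\ \text{or}\ |X_i|^3\text{-type},\qquad B=A_i,\qquad a=b=z,\qquad c=1,
\]
after replacing $W$ by $S_A$. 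This replacement is legitimate because $|W-S_A|\le\sum_{k\in N_A}|X_k|$, so the extra slack can be absorbed into $B$ by enlarging it to $A_i\cup N_i$. Each $\delta_k$ appearing in Proposition \ref{prop:1} is then multiplied by weights of the form $|A_i|\,\|X_i\|_4^3$, summed over $i$:

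1. $\delta_0=0$.
2. $\delta_1$ and $\delta_2$ produce the $|A_i|^2\|X_i\|_4^3\|X_j\|_4$-type sums, which lie in $\beta_2^2$; we then use $\sqrt{\cdot}$ together with the smallness of $\beta$'s.
3. $\delta_3$ and $\delta_4$ produce the $\beta_3^2$ sums.
4. $\delta_5,\delta_6,\delta_7$ are themselves at most $\beta_1+\beta_2+\beta_3$ and appear multiplied by $\sum_i|A_i|^2\|X_i\|_4^3\le\beta_1\le1$.

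The main obstacle will be this last step, in two respects. First, we must arrange the $\|\xi_A\|_{4/3}$ norms so that $\||X_i|^2\|_{4/3}$-type weights combine correctly; this is why the $\|\cdot\|_4^3$ norms appear. Second, products such as (weight)$\times\delta_1$ must yield exactly the squared quantities in $\beta_2^2,\beta_3^2$, rather than something larger. If a product like $\sum_i|A_i|^2\|X_i\|_4^3\cdot\sum_{k\in N_i}\|X_k\|_4$ does not directly match, we bound it as $\le\beta_2^2$ and use $\beta_2^2\le\beta_2$, valid under $\beta_2\le1$. Collecting the bounds on $R_1$, $R_{21}$ and $R_{22}$ gives (\ref{eq-thm-01}).
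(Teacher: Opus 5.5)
Your treatment of $R_1$ and of the Lipschitz part $R_{21}$ is close to the paper's. The argument for $R_{22}$, however, has a genuine gap, and it comes from working with the unsmoothed solution $f_z$.

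With $h=\mathbf 1(\cdot\le z)$, the jump of $f_z'$ gives exactly your bound $|R_{22}|\le\sigma^{-2}\sum_i\e\{|X_iY_i|\,\mathbf 1(z-|Y_i|/\sigma\le W\le z+|Y_i|/\sigma)\}$. Its weight is only \emph{quadratic} in the $X$'s. After $|X_iY_i|\le\frac12\sum_{j\in A_i}(X_i^2+X_j^2)$ you can only take $\xi_A=X_i^2$ (or $X_j^2$) in Proposition \ref{prop:1}, with $\|\xi_A\|_{4/3}=\|X_i\|_{8/3}^2$. A cubic weight $|X_i|^3$ is simply not available, so the bookkeeping in your items 2--4 does not match what the proposition returns.

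Done correctly, the global terms $\delta_5+\delta_6+\delta_7$, which are of size $\beta_1+\beta_2+\beta_3$, come multiplied by $\Lambda:=\sigma^{-2}\sum_i\sum_{j\in A_i}(\|X_i\|_{8/3}^2+\|X_j\|_{8/3}^2)$. They are not multiplied by $\sigma^{-3}\sum_i|A_i|^2\|X_i\|_4^3\le\beta_1$, as you claim. By Cauchy--Schwarz, $\Lambda\ge 2\sigma^{-2}\sum_i\sum_{j\in A_i}\e\{X_iX_j\}=2$. But $\Lambda$ is not bounded by any absolute constant. It is at least of order $\lambda$ when $|A_i|\asymp\kappa$. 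Already for i.i.d.\ summands with $A_i=\{i\}$ it equals $2\|X_1\|_{8/3}^2/\|X_1\|_2^2$.

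Likewise, $\delta_1,\delta_2$ produce third-order sums such as $\sigma^{-3}\sum_i|A_i|\,\|X_i\|_{8/3}^2\sum_{m\in A_i\cup N_i}\|X_m\|_4$, not the fourth-order sums of $\beta_2^2$. With your extra factor $|A_i|$ they even exceed $\beta_1$ by a factor of order $\kappa$. So this route gives at best $C\Lambda(\beta_1+\beta_2+\beta_3)$, not (\ref{eq-thm-01}).

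The missing idea is the smoothing used in the paper. Replace $\mathbf 1(w\le z)$ by the piecewise linear $h_{z,\varepsilon}$, at a cost of $0.4\varepsilon$, and use (\ref{eq-y-02}). The non-Lipschitz part of $f'_{z,\varepsilon}(w+t)-f'_{z,\varepsilon}(w)$ is $\varepsilon^{-1}\int_{t\wedge0}^{t\vee0}\mathbf 1(z\le w+u\le z+\varepsilon)\,du$. Integrating it against $|\hat K_i(t)|$ gives at most $(2\varepsilon\sigma^3)^{-1}|X_i|Y_i^2\,\mathbf 1(z-|Y_i|/\sigma\le W\le z+\varepsilon+|Y_i|/\sigma)$.

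The extra factor $|Y_i|$, paid for by $1/\varepsilon$, makes the weight cubic. After Young you can apply Proposition \ref{prop:1} with $A=\{i\}$, $\xi_A=|X_i|^3$ (so $\|\xi_A\|_{4/3}=\|X_i\|_4^3$), $S$ replaced by $S-\sum_{k\in N_i}X_k$, $B=A_i\cup N_i$, $c=2$, $a=z$, $b=z+\varepsilon$. Treat the $|X_j|^3$ terms the same way. Then:
\begin{itemize}
\item $\delta_0=\varepsilon/100$ cancels the $1/\varepsilon$ and gives $C\beta_1$;
\item the local terms $\delta_1,\dots,\delta_4$ give $C(\beta_2^2+\beta_3^2)/\varepsilon$, which is where the fourth- and fifth-order sums in $\beta_2^2,\beta_3^2$ come from;
\item the global terms $\delta_5,\delta_6,\delta_7$ give $C\beta_1(\beta_1+\beta_2+\beta_3)/\varepsilon$.
\end{itemize}
All of these are $O(\beta_1+\beta_2+\beta_3)$ for $\varepsilon=\beta_1+\beta_2+\beta_3$. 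The last term needs $\varepsilon\gtrsim\beta_1$; the paper's stated choice $\varepsilon=\beta_2+\beta_3$ leaves the resulting $\beta_1^2/\varepsilon$ unaccounted for.

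A smaller point on $R_{21}$: you cannot decouple the whole product $|X_iY_i^2|$ from a remainder of $W$. (LD1)--(LD2) give independence only for single variables and pairs, not for the block $\{X_j:j\in A_i\}$. Apply Young first, then use that $X_i$ is independent of $S-Y_i$ (and $X_j$ of $S-Y_j$), as the paper does.
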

\begin{proof}
  Without loss of generality, we  assume that 
\begin{align}\label{eq-y-0.1}
	\beta_{2}\leq 1/8,
\end{align}
Otherwise (\ref{eq-thm-01}) naturally holds.  
For any $z\in \mathbb{R}$ and $\varepsilon>0$, let
\begin{align}\label{eq-defininition-of-h}
	h_{z, \varepsilon}(w)= \begin{cases}1, & \text { if } w \leq z, \\  1+\varepsilon^{-1}(z-w), & \text { if } z<w, \leq z+\varepsilon\\ 0, & \text { if } w>z+\varepsilon. \end{cases}
\end{align}
Note that \begin{align}\label{eq-ppa-01}
  \sup_{z\in \R}|\p(W_{1}\leq z)-\Phi(z)|  \leq 0.4\varepsilon+\sup_{z\in \R}|\e h_{z, \varepsilon}(W_{1})-\e h_{z,\varepsilon}(Z)|,
\end{align}
we focus on $|\e h_{z, \varepsilon}(W_{1})-\e h_{z,\varepsilon}(Z)|.$
Let $f_{z,\varepsilon}$ be the solution of  the following Stein equation
\begin{align}\label{eq-y-001}
	f^{\prime}(w)-w f(w)=h_{z, \varepsilon}(w)-\e \{h_{z,\varepsilon}(Z)\},
\end{align}
where $Z\sim N(0,1).$
It can be shown that (see, e.g. \cite{ChenandShao})
\begin{align}\label{eq-y-01}
	0\leq f_{z,\varepsilon}(w)\leq 1,  \quad|f_{z, \varepsilon}'(w)|\leq 1
\end{align}
and
\begin{align}\label{eq-y-02}
	\left|f_{z, \varepsilon}^{\prime}(w+t)-f_{z, \varepsilon}^{\prime}(w)\right| \leq(|w|+1)|t|+\frac{1}{\varepsilon} \int_{t \wedge 0}^{t \vee 0} \mathbf{1}[z \leq w+u \leq z+\varepsilon] d u.
\end{align}
For any $i\in [n]$,  let $Y_i=\sum_{j \in A_i} X_i$, $S^{(i)}=S-Y_i$ and 
\begin{align}\label{eq-definition-of-K}
\begin{aligned}
        \hat{K}_{i}(t)&= X_{i}(\mathbf{1}(-Y_{i}\leq t\leq 0)-\mathbf{1}(0\leq t\leq -Y_{i})),\\
        \hat{K}_{i}&=\int_{-\infty}^{+\infty}\hat{K}_{i}(t)dt,\quad \hat{K}=\sum_{i\in [n]}\hat{K}_{i}.
\end{aligned}
\end{align}
By (LD1), $X_i$ is independent of $S^{(i)}$, it follows that
\begin{align*}
    &\mathbb{E} W_{1} f_{z, \varepsilon}(W_{1})\\
	&\quad =\frac{1}{\sigma}\sum_{i \in[n]} \mathbb{E}\big\{X_i\big(f_{z, \varepsilon}(S/\sigma)-f_{z, \varepsilon}(S^{(i)}/\sigma)\big)\big\}\\
	&\quad=\frac{1}{\sigma^{2}}\sum_{i \in[n]} \mathbb{E}\Big\{X_i \int_{-Y_i}^0 f_{z, \varepsilon}^{\prime}((S+t)/\sigma) dt\Big\} \\
	&\quad=\frac{1}{\sigma^{2}}\sum_{i \in[n]} \mathbb{E}\left\{X_i Y_i f_{z, \varepsilon}^{\prime}(W_{1})\right\}+\frac{1}{\sigma^{2}}\sum_{i \in[n]} \mathbb{E}\Big\{X_i \int_{-Y_i}^0\left[f_{z, \varepsilon}^{\prime}((S+t)/\sigma)-f_{z, \varepsilon}^{\prime}(S/\sigma)\right] d t\Big\}\\
	&\quad=\frac{1}{\sigma^{2}} \mathbb{E}\big\{\hat{K} f_{z, \varepsilon}^{\prime}(W_{1})\big\}+\frac{1}{\sigma^{2}} \mathbb{E}\Big\{\int_{-\infty}^{+\infty}\left[f_{z, \varepsilon}^{\prime}((S+t)/\sigma)-f_{z, \varepsilon}^{\prime}(S/\sigma)\right] \hat{K}(t) d t\Big\}.
\end{align*}
Therefore,
\begin{align}\label{eq-y-04}
	&|\e h_{z, \varepsilon}(W_{1})-\e h_{z,\varepsilon}(Z)|
	\leq I_{1}+I_{2}+I_{3},
\end{align}
where 
\begin{align*}
  I_{1}&=\frac{1}{\sigma^{2}}\e|\hat{K}-\sigma^{2}|,\qquad I_{2}=\frac{1}{\sigma^{3}}\sum_{i\in [n]}\e\{| X_{i}Y_{i}^{2}|(1+|W_{1}|)\},\\
I_{3}&=\frac{1}{\varepsilon\sigma^{3}}\sum_{i\in [n]}\e\Big\{\int_{-\infty}^{+\infty}\int_{t \wedge 0}^{t \vee 0} \mathbf{1}(z \leq (S+u)/\sigma \leq z+\varepsilon)\cdot |\hat{K}_{i}(t)| d u d t\Big\}.
\end{align*}
In what follows,  we provide upper bounds for $I_{1}$, $I_{2}$ and $I_{3}$ respectively. For $I_{1}$, by Lemma \ref{lem-XiYi-moment}, we have
\begin{align}\label{eq-y-05}
	I_{1}=\frac{1}{\sigma^{2}}\e|\hat{K}-\sigma^{2}|&=\frac{1}{\sigma^{2}}\e \Big|\sum_{i\in [n]}\sum_{j\in  A_{i}}(X_{i}X_{j}-\e X_{i}X_{j}) \Big|\leq C\beta_{2}.
\end{align}
For $I_{2}$, we have
\begin{align}\label{eq-y-06}
	I_{2}&\leq \frac{1}{\sigma^{3}}\sum_{i\in [n]}|A_{i}|\sum_{j\in A_{i}}\e\{| X_{i}X_{j}^{2}|(1+|W_{1}|)\}
	\leq I_{21}+I_{22},
\end{align}
where 
\begin{align*}
    I_{21}&=\frac{1}{3\sigma^{3}}\sum_{i\in [n]}|A_{i}|^{2}\e\{| X_{i}^{3}|(1+|W_{1}|)\},\quad I_{22}=\frac{2}{3\sigma^{3}}\sum_{i\in [n]}|A_{i}|\sum_{j\in A_{i}}\e\{| X_{j}^{3}|(1+|W_{1}|)\}.
\end{align*}
For $I_{21}$, let $W_{1}^{(i)}=S^{(i)}/\sigma$, then 
\begin{align*}
\begin{aligned}
        	I_{21}&\leq \frac{1}{3\sigma^{3}}\sum_{i\in [n]}|A_{i}|^{2}\e\big\{| X_{i}^{3}|(1+|W_{1}^{(i)}|)\big\}+\frac{1}{3\sigma^{4}}\sum_{i\in [n]}|A_{i}|^{2}\e| X_{i}^{3}Y_{i}|\\
	&\leq \frac{1}{3\sigma^{3}}\sum_{i\in [n]}|A_{i}|^{2}\e| X_{i}|^{3}\cdot(1+\e|W_{1}^{(i)}|)+\frac{1}{3\sigma^{4}}\sum_{i\in [n]}\sum_{j\in A_{i}}|A_{i}|^{2}\| X_{i}\|_{4}^{3}\|X_{j}\|_{4}.
\end{aligned}
\end{align*}
Noting that by Minkowski's inequality, we have
\begin{align*}
	\|S^{(i)}\|_{2}\leq \|S\|_{2}+\|S-S^{(i)}\|_{2}\leq \sigma+\sum_{j\in A_{i}}\|X_{j}\|_{2},
\end{align*}
which further implies 
\begin{align}\label{eq-y-09}
	I_{21}\leq \frac{C}{\sigma^{3}}\sum_{i\in [n]}|A_{i}|^{2}\e|X_{i}|^{3}+ \frac{C}{\sigma^{4}}\sum_{i\in [n]}\sum_{j\in A_{i}}|A_{i}|^{2}\| X_{i}\|_{4}^{3}\|X_{j}\|_{4}.
\end{align}
With a similar argument, we can obtain the following upper bound for  $I_{22}$:
\begin{align}\label{eq-y-09.1}
	I_{22}\leq \frac{C}{\sigma^{3}}\sum_{i\in [n]}\sum_{j\in A_{i}} |A_{i}|\left\|X_j\right\|_{4}^3+ \frac{C}{\sigma^{4}}\sum_{i\in [n]}\sum_{j\in A_{i}}\sum_{k\in A_{j}} |A_{i}|\|X_j\|_{4}^3\|X_{k}\|_{4}.
\end{align} 
Combining  (\ref{eq-y-0.1}), (\ref{eq-y-09})  and (\ref{eq-y-09.1}) yields
\begin{align}\label{eq-y-10}
	I_{2}\leq C\beta_{1}+C\beta_{2}^{2}\leq C\beta_{1}+C\beta_{2}.
\end{align}
For $I_{3}$, as $|s|\leq |Y_{i}|$, then
\begin{align}\label{eq-y-11}
\begin{aligned}
    	I_{3}&\leq \frac{1}{2\varepsilon\sigma^{3}}\sum_{i\in [n]} \e\big\{ |X_{i}Y_{i}^{2}|\mathbf{1}(z-|Y_{i}|/\sigma \leq S/\sigma \leq z+|Y_{i}|/\sigma+\varepsilon)\big\}\\
	&\leq \frac{1}{2\varepsilon\sigma^{3}}\sum_{i\in [n]}\sum_{j\in A_{i}} |A_{i}|\e\big\{ |X_{i}X_{j}^{2}|\mathbf{1}(z-|Y_{i}|/\sigma \leq S/\sigma \leq z+|Y_{i}|/\sigma+\varepsilon)\big\}\\
	&\leq I_{31}+I_{32},
\end{aligned}
\end{align}
where 
\begin{align*}
    I_{31}&=\frac{1}{6\varepsilon\sigma^{3}}\sum_{i\in [n]} |A_{i}|^{2}\e\big\{ |X_{i}|^{3}\mathbf{1}(z-|Y_{i}|/\sigma \leq S/\sigma \leq z+|Y_{i}|/\sigma+\varepsilon)\big\},\nonumber\\
    I_{32}&=\frac{1}{3\varepsilon\sigma^{3}}\sum_{i\in [n]}\sum_{j\in A_{i}} |A_{i}|\e\big\{ |X_{j}|^{3}\mathbf{1}(z-|Y_{i}|/\sigma \leq S/\sigma \leq z+|Y_{i}|/\sigma+\varepsilon)\big\}.
\end{align*}
For any $i\in [n]$, let $$S_{i}=S-\sum_{k\in N_{i}}X_{k}.$$
For $I_{31}$, choosing 
\[
\varepsilon=\beta_{2}+\beta_{3},
\] then by Proposition \ref{prop:1}, we have
\begin{align*}
    I_{31}&\leq \frac{1}{6\varepsilon\sigma^{3}}\sum_{i\in [n]}|A_{i}|^{2}\e\Big\{ |X_{i}|^{3}\mathbf{1}\Big(z-2\sum_{m\in A_{i}\cup N_{i}}\frac{|X_{m}|}{\sigma} \leq \frac{S_{i}}{\sigma} \leq z+2\sum_{m\in A_{i}\cup N_{i}}\frac{|X_{m}|}{\sigma}+\varepsilon\Big)\Big\}\\
	& \leq\frac{C}{\varepsilon\sigma^{3}}\sum_{i\in [n]}|A_{i}|^{2}\|X_{i}\|_{4}^{3}\cdot \Big(\varepsilon+\frac{1}{\sigma}\sum_{j\in N_{i}}\|X_{j}\|_{4}+\frac{1}{\sigma}\sum_{m\in A_{i}\cup N_{i}}\|X_{m}\|_{4}\\
    &\qquad\qquad+\frac{1}{\sigma^{2}}\sum_{m\in A_{i}\cup N_{i}}\sum_{k\in N_{m}}\|X_{k}\|_{4}\|X_{m}\|_{4}+\sum_{(k,l)\in D_{i}}\|X_{k}\|_{4}\|X_{l}\|_{4}+\beta_{1}+\beta_{2}+\beta_{3}\Big)\\
	&\leq C\beta_{1}+C(\beta_{2}^{2}+\beta_{3}^{2})/\varepsilon\\
	&\leq C(\beta_{1}+\beta_{2}+\beta_{3}).
\end{align*}
Similarly, for $I_{32}$, we have
\begin{align*}
	I_{32}&\leq \frac{1}{3\varepsilon\sigma^{3}}\sum_{i\in [n]}\sum_{j\in A_{i}}|A_{i}|\e\Big\{ |X_{j}|^{3}\mathbf{1}\Big(z-2\sum_{m\in A_{i}\cup N_{j}}|X_{m}| \leq\frac{S_{j}}{\sigma}\leq z+2\sum_{m\in A_{i}\cup N_{j}}|X_{m}|+\varepsilon\Big)\Big\}\nonumber\\
	&\leq C(\beta_{1}+\beta_{2}+\beta_{3}),
\end{align*}
and hence 
\begin{align}\label{eq-y-13}
    I_{3}\leq C(\beta_{1}+\beta_{2}+\beta_{3}).
\end{align}
We complete the proof of Theorem \ref{thm-main1-1}
by (\ref{eq-ppa-01}) (\ref{eq-y-04}), (\ref{eq-y-05}), (\ref{eq-y-11}), (\ref{eq-y-10}) and (\ref{eq-y-13}).
\end{proof}
 
 \begin{proof}[Proof of Theorem \ref{thm-main1-1}.]
    Observe that 
	\begin{align*}
		\beta_{1}\leq \frac{2\kappa^{2}}{\sigma^{3}}\sum_{i\in [n]}\|X_{i}\|_{4}^{3}\quad \text{and}\quad  \beta_{2}\leq \frac{C\kappa^{1/2}(\kappa+\tau^{1/2})}{\sigma^{2}}\Big(\sum_{i\in [n]}\|X_{i}\|_{4}^{4}\Big)^{1/2}.
	\end{align*}
	As for $\beta_{3}$, note that for any $i\in [n]$, by Cauchy's inequality, we have 
	\begin{align}\label{eq-pr-03}
    \begin{aligned}
        		\sum_{j\in A_{i}}\sum_{k\in N_{j}}\|X_{j}\|_{4}\|X_{k}\|_{4}&\leq C \Big(\sum_{j\in A_{i}}\sum_{k\in N_{j}}(\|X_{j}\|_{4}^{2}+\|X_{j}\|_{4}^{2})^{2}\Big)^{1/2}\cdot \Big(\sum_{j\in A_{i}}\sum_{k\in N_{j}}1\Big)^{1/2}\\
		& \leq C\Big(\kappa^{3}\sum_{j\in [n]}\|X_{j}\|_{4}^{4}\Big)^{1/2}.
    \end{aligned}
	\end{align}
	Similarly, we have
	\begin{align}\label{eq-pr-03.1}
		\sum_{j\in N_{i}}\sum_{k\in N_{j}}\|X_{j}\|_{4}\|X_{k}\|_{4}
		& \leq C\Big(\kappa^{3}\sum_{j\in [n]}\|X_{j}\|_{4}^{4}\Big)^{1/2}.
	\end{align}
	Moreover, by Cauchy's inequality, we have
	\begin{align}\label{eq-pr-04}
    \begin{aligned}
        		\sum_{(k,l)\in D_{i}}\|X_{k}\|_{4}\|X_{l}\|_{4}&\leq \frac{1}{2}\sum_{(k,l)\in D_{i}}(\|X_{k}\|_{4}^{2}+\|X_{l}\|_{4}^{2})\\
		&\leq \frac{1}{2}\Big(\sum_{(k,l)\in D_{i}}(\|X_{k}\|_{4}^{2}+\|X_{l}\|_{4}^{2})^{2}\Big)^{1/2}\Big(\sum_{(k,l)\in D_{i}}1\Big)^{1/2}\\
		&\leq C\Big(\kappa\tau\sum_{k\in [n]}\|X_{k}\|_{4}^{4}\Big)^{1/2}.
    \end{aligned}
	\end{align}
	Combining (\ref{eq-pr-03})--(\ref{eq-pr-04}) yields that
	\begin{align*}
		\beta_{3}\leq \frac{C\kappa^{2}}{\sigma^{3}}\sum_{i\in [n]}\|X_{i}\|_{4}^{3}+\frac{C\kappa^{1/2}(\kappa+\tau^{1/2})}{\sigma^{2}}\Big(\sum_{i\in [n]}\|X_{i}\|_{4}^{4}\Big)^{1/2}.
	\end{align*}
	This proves Theorem \ref{thm-main1-1}.
 \end{proof}

\subsection{Proof of Theorem \textup{\ref{thm-main2}}}
 In this subsection, we prove  Theorem \textup{\ref{thm-main2}} by Stein's method and the concentration inequalities approach.  Let
 \begin{align*}
 	\bar{V}=\psi\big(\sum_{i\in [n]}X_{i}Y_{i}\big)\quad \text{and}\quad \bar{W}_{2}=\sum_{i\in [n]}X_{i}/\bar{V},
 \end{align*}
 where $\psi(x)=((x\vee (\sigma^{2}/4))\wedge (2\sigma^{2}))^{1/2}.$ We begin by introducing a proposition which provides a Berry--Esseen bound for $\bar{W}_{2}.$
 \begin{prop}\label{prop-BE-bar-W2} Under \textup{(LD1)} and \textup{(LD2)}, we have
 	\begin{align*}
 		\sup _{z \in \mathbb{R}}|\mathbb{P}(\bar{W}_{2} \leq z)-\Phi(z)| \leq C \lambda\Big\{\frac{\kappa^{2}}{\sigma^{3}}\sum_{i \in[n]}\|X_{i}\|_{4}^{3}+\frac{\kappa^{1 / 2}(\kappa+\tau^{1/2})}{\sigma^{2}}\big(\sum_{i \in[n]} \|X_i\|_{4}^4\big)^{1 / 2}\Big\},
 	\end{align*}
 	where
 	\[
 	\lambda=\frac{\kappa}{\sigma^{2}}\sum_{i\in [n]}\|X_{i}\|_{2}^{2}.
 	\]
 \end{prop}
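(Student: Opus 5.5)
We follow the template of the proof of Theorem \ref{thm-main1}, now working with $\bar W_2=S/\bar V$ and replacing Proposition \ref{prop:1} by Proposition \ref{prop:2}.

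\emph{Reduction.} Write $\beta=\frac{\kappa^{2}}{\sigma^{3}}\sum_i\|X_i\|_4^3+\frac{\kappa^{1/2}(\kappa+\tau^{1/2})}{\sigma^2}(\sum_i\|X_i\|_4^4)^{1/2}$. Note that $\lambda\ge c_0>0$: since $\sigma^2=\sum_i\sum_{j\in A_i}\e X_iX_j\le \sum_i\sum_{j\in A_i}\|X_i\|_2\|X_j\|_2\le\kappa\sum_i\|X_i\|_2^2$, we have $\lambda\ge1$. Hence we may assume $\lambda\beta\le 1/500$ (otherwise the bound is trivial). In particular the smallness conditions \eqref{eq-t1-00}, \eqref{eq-t2-01} (with $|A|=1$) and \eqref{eq-assume-prop-2} hold, so Lemmas \ref{lem-R4} and \ref{lem-fourth-moment-for-S} are available.

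\emph{Stein identity.} Take the smoothed indicator $h_{z,\varepsilon}$ from \eqref{eq-defininition-of-h} and the Stein solution $f=f_{z,\varepsilon}$, and choose $\varepsilon\asymp\lambda\beta$. Decompose
\[
\e\{\bar W_2 f(\bar W_2)\}=\sum_i\e\Big\{\tfrac{X_i}{\bar V}\big(f(\bar W_2)-f(\bar W_2-Y_i/\bar V)\big)\Big\}+\sum_i\e\Big\{\tfrac{X_i}{\bar V}f(\bar W_2-Y_i/\bar V)\Big\}.
\]
The second sum is exactly what Lemma \ref{lem-R4} controls, by $C\beta$ up to fourth-moment terms that are themselves $\le C\beta^2$. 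Write the first sum as $\e\{\bar V^{-2}\int f'(\bar W_2+t/\bar V)\hat K(t)\,dt\}$, with $\hat K$ as in \eqref{eq-definition-of-K}. This splits into three pieces.

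The first piece is $\e\{f'(\bar W_2)(\hat K/\bar V^2-1)\}$. On the event $\{\sigma^2/4\le \sum X_iY_i\le2\sigma^2\}$ we have $\bar V^2=\hat K$, so this piece vanishes there. Off that event, $|\hat K-\sigma^2|\ge\sigma^2/2$. Lemma \ref{lem-XiYi-moment} (Chebyshev together with $\e|\hat K-\sigma^2|\le 4\sqrt\gamma$) then bounds this piece by $C\sqrt\gamma/\sigma^2\le C\beta$.

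The second piece is the remainder term from \eqref{eq-y-02}, containing $(1+|\bar W_2|)|t|$. It is bounded by $\sigma^{-3}\sum_i\e|X_iY_i^2|(1+|S|/\sigma)$. This is handled as $I_2$ was in Theorem \ref{thm-main1}, using $\e S^4\le13\lambda\sigma^4$ from Lemma \ref{lem-fourth-moment-for-S} where Cauchy--Schwarz is needed. This is the source of the factor $\lambda$.

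\emph{Main obstacle: the concentration term.} The remaining piece is
\[
\frac{C}{\varepsilon\sigma^3}\sum_i\e\{|X_iY_i^2|\,\mathbf 1(z-2|Y_i|/\sigma\le S/\bar V\le z+2|Y_i|/\sigma+\varepsilon)\}.
\]
Split it, as with $I_{31},I_{32}$, into terms $|A_i|^2\e\{|X_i|^3\mathbf 1(\cdot)\}$ and $|A_i|\e\{|X_j|^3\mathbf 1(\cdot)\}$. Then pass from $S/\bar V$ to $S_{A}/\bar V_{A}$ with $A=\{i\}$ (resp.\ $\{j\}$). Exactly as in \eqref{eq-yy-03}--\eqref{eq-yy-12}, the replacement error is at most $C\sum_{m\in A_i\cup N_i}|X_m|/\sigma+C|S_A|Q_A/\sigma$, using $|\psi(x)-\psi(y)|\le\sqrt{|x-y|}$ and $\bar V\ge\sigma/2$. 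The indicator is therefore dominated by one of the form appearing in Proposition \ref{prop:2}, with $\xi_A=|X_i|^3$, $B=A_i\cup N_i$, and a constant $c$. Proposition \ref{prop:2} gives the bound $C\|X_i\|_4^3(\varepsilon+\delta_1+\delta_2+\delta_3+\delta_4)$. Summing over $i$ against $|A_i|^2/(\varepsilon\sigma^3)$:
\begin{itemize}
\item the $\varepsilon$-term gives $C\beta$;
\item the $\delta_2,\delta_3$ terms give $C\lambda\beta\cdot\beta/\varepsilon\le C\beta$;
\item the $\delta_1,\delta_4$ terms are handled by Young's inequality as in \eqref{eq-yy-14}--\eqref{eq-yy-16}, producing $C\lambda\kappa^3\sum_i\|X_i\|_4^4/(\varepsilon\sigma^4)\le C\lambda\beta^2/\varepsilon\le C\beta$.
\end{itemize}
The delicate points are checking that the perturbation produced by the self-normalizer genuinely fits the $c|S_A|Q_A/\sigma$ slot of Proposition \ref{prop:2}, and tracking where $\lambda$ enters. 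Collecting all the terms and applying \eqref{eq-ppa-01} yields the claimed bound $C\lambda\beta$.
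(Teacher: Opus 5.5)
Your proposal is correct and follows the paper's proof essentially step for step. It uses the same Stein decomposition into the self-normalizer term, the $(1+|\bar W_2|)|t|$ remainder, the concentration term and the leave-out term, with the same choice $\varepsilon\asymp\lambda\beta$. Each piece is handled by the same tools: Lemma \ref{lem-XiYi-moment} for the first, Lemma \ref{lem-R4} for the last, and for the concentration term the replacement $S/\bar V\to S_i/\bar V_i$ followed by Proposition \ref{prop:2} and \eqref{eq-yy-14}--\eqref{eq-yy-16}.

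One small correction: the $(1+|\bar W_2|)|t|$ term does not need $\e S^4$. The paper bounds it exactly like $I_2$, using the independence of $X_i$ and $S-Y_i$. The factor $\lambda$ therefore enters only through Proposition \ref{prop:2} and the choice of $\varepsilon$, which is what your own $\delta_2,\delta_3,\delta_4$ bookkeeping already shows.
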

 \begin{proof} Without loss of generality, assume that \[\kappa^{2}\sum_{i\in [n]}\e|X_{i}|^{3}\leq \sigma^{3}/500\quad \text{and} \quad \kappa^{1 / 2}(\kappa+\tau^{1/2})\big(\sum_{i \in[n]} \|X_i\|_{4}^4\big)^{1 / 2}\leq \sigma^{2}/500,\] otherwise the inequality is trivial. Let $f_{z,\varepsilon}$ be the solution of Stein equation (\ref{eq-y-001}) with
 	\[
 	\varepsilon=\frac{\lambda\kappa^{2}}{\sigma^3} \sum_{i \in [n]} \|X_{i}\|_{4}^{3}+\frac{\lambda\kappa^{1/2}(\kappa+\tau^{1/2})}{\sigma^{2}}\big(\sum_{i\in [n]}\|X_{i}\|_{4}^{4}  \big)^{1/2}.
 	\]
 Observe that
 	\begin{align*}
         		&\mathbb{E}\{\bar{W}_{2} f_{z,\varepsilon}(\bar{W}_{2})\}\\
 		&\quad=\sum_{i \in [n]} \mathbb{E}\Big\{\frac{X_{i}}{\bar{V}}\Big(f_{z,\varepsilon}(\bar{W}_{2})-f_{z,\varepsilon}\Big(\frac{S-Y_{i}}{\bar{V}}\Big)\Big)\Big\}+\sum_{i \in [n]} \mathbb{E}\Big\{\frac{X_{i}}{\bar{V}} f_{z,\varepsilon}\Big(\frac{S-Y_{i}}{\bar{V}}\Big)\Big\} \\
 		&\quad=\mathbb{E}\Big\{\frac{1}{\bar{V}^2} \int_{-\infty}^{\infty} f_{z,\varepsilon}^{\prime}\Big(\frac{S+u}{\bar{V}}\Big) \hat{K}(u) d u\Big\}+\sum_{i \in [n]} \mathbb{E}\Big\{\frac{X_{i}}{\bar{V}} f_{z,\varepsilon}\Big(\frac{S-Y_{i}}{\bar{V}}\Big)\Big\},
 	\end{align*}
 	where $\hat{K}(u)$ is defined in (\ref{eq-definition-of-K}).
 	With similar arguments as that leading to (\ref{eq-y-04}), we have
 	\begin{align}\label{eq-l-03}
 	 |\e h_{z, \varepsilon}(\bar{W}_{2})-\e h_{z,\varepsilon}(Z)|\leq  |R_1|+|R_2|+|R_3|+|R_4|,
 	\end{align}
 	where
 	\begin{align*}
 		R_1&=\mathbb{E}\Big|1-\frac{1}{\bar{V}^2} \sum_{i \in [n]} X_{i} Y_{i}\Big|, \\
 		R_2&=\frac{8}{\sigma^3} \mathbb{E}\left\{(|\bar{W}_{2}|+1) \int_{-\infty}^{\infty}|t \hat{K}(t)| d t\right\}, \\
 		R_3&=\frac{8}{\varepsilon\sigma^3} \sum_{i \in [n]} \mathbb{E}\left\{\left|X_{i} Y_{i}^{2}\right| \mathbf{1}\left(z-\left|Y_{i}\right| / \bar{V} \leq \bar{W}_{2} \leq z+\varepsilon+\left|Y_{i}\right| / \bar{V}\right)\right\},\\
 		R_4&=\sum_{i \in [n]} \Big|\mathbb{E}\Big\{\frac{X_{i}}{\bar{V}} f_{z,\varepsilon}\Big(\frac{S-Y_{i}}{\bar{V}}\Big)\Big\}\Big|.
 	\end{align*}
 	Below we give upper bounds for $R_{1},R_{2}, R_{2}$ and $R_{4}$ in turn.
 	For $R_1$, we have
 	\begin{align*}
 		R_1 \leq & R_{11}+R_{12},
 	\end{align*}
    where 
    \begin{align*}
        R_{11}&=\mathbb{E}\Big\{\Big|1-\frac{4}{\sigma^2} \sum_{i \in [n]} X_{i} Y_{i}\Big| \mathbf{1}\Big(\sum_{i \in [n]} X_i Y_i\leq \sigma^2 / 4\Big)\Big\},\\
        R_{12}&=\mathbb{E}\Big\{\Big|1-\frac{1}{2\sigma^2} \sum_{i \in [n]} X_{i} Y_{i}\Big| \mathbf{1}\Big(\sum_{i \in [n]} X_i Y_i\geq 2\sigma^2 \Big)\Big\}.
    \end{align*}
    For $R_{11}$, by Lemma \ref{lem-XiYi-moment} and Chebyshev's inequality, we have
    \begin{align*}
                R_{11}&\leq \frac{4}{\sigma^{2}}\e\Big\{\Big(\Big|\sum_{\in [n]}X_{i}Y_{i}-\sigma^{2}\Big|+\frac{3\sigma^{2}}{4}\Big)\mathbf{1}\Big(\Big|\sum_{i \in [n]} X_i Y_i-\sigma^{2}\Big|\geq 3\sigma^2 / 4\Big)\Big\}\\
       &\leq \frac{C}{\sigma^{2}}\e\Big|\sum_{i \in [n]} (X_{i}Y_{i}-\e X_{i}Y_{i})\Big|\\
       &\leq \frac{C\kappa^{1/2}(\kappa+\tau)}{\sigma^{2}}\Big(\sum_{i\in [n]}\|X_{i}\|_{4}^{4}\Big)^{1/2}.
    \end{align*}
   A similar argument holds for $R_{12}$, and hence 
   \begin{align}\label{eq-l-04}
       R_{1}\leq \frac{C\kappa^{1/2}(\kappa+\tau)}{\sigma^{2}}\Big(\sum_{i\in [n]}\|X_{i}\|_{4}^{4}\Big)^{1/2}.
   \end{align}
 	For $R_{2}$, note that $\bar{V}\geq \sigma/2$. Combining this with (\ref{eq-y-06}), (\ref{eq-y-09}) and (\ref{eq-y-09.1}) implies that
 	\begin{align}\label{eq-l-05}
    \begin{aligned}
        R_2 &\leq \frac{8}{\sigma^3} \sum_{i \in [n]} \mathbb{E}\left\{(|\bar{W}_{2}|+1)\left|X_{i} Y_{i}^2\right|\right\}\\
        & \leq \frac{8}{\sigma^3} \sum_{i \in [n]} \mathbb{E}\Big\{\big(\frac{2}{\sigma}|S|+1\big)\left|X_{i} Y_{i}^2\right|\Big\}\\
 		&\leq \frac{C\kappa^{2}}{\sigma^{3}}\sum_{i\in [n]}\e|X_{i}|^{3}+ \frac{C\kappa^{3}}{\sigma^{4}}\sum_{i\in [n]}\e|X_{i}|^{4}.
    \end{aligned}	
 	\end{align}
 	For $R_{3}$, by Young's inequality, we have
 	\begin{align}\label{eq-l-06}
 		R_{3}&\leq R_{31}+R_{32},
 	\end{align}
    where 
    \begin{align*}
       R_{31}&=\frac{8\kappa^{2}}{3\varepsilon\sigma^3} \sum_{i \in [n]} \mathbb{E}\left\{|X_{i}|^{3} \mathbf{1}\left(z-\left|Y_{i}\right| / \bar{V} \leq \bar{W}_{2} \leq z+\varepsilon+\left|Y_{i}\right| / \bar{V}\right)\right\},\\
       R_{32}&=\frac{16\kappa}{3\varepsilon\sigma^3} \sum_{i \in [n]}\sum_{j\in A_{i}} \mathbb{E}\left\{|X_{j}|^{3} \mathbf{1}\left(z-\left|Y_{i}\right| / \bar{V} \leq \bar{W}_{2} \leq z+\varepsilon+\left|Y_{i}\right| / \bar{V}\right)\right\}.
    \end{align*}
 	For $R_{31}$, define
 	\[
 	\bar{V}_{i}=\psi\Big(\sum_{k\in N_{i}^{c}}\sum_{l\in A_{k}\cap N_{i}^{c}} X_{k}X_{l}\Big)\quad\text{and}\quad  S_{i}=\sum_{k\in N_{i}^{c}}X_{k},
 	\]
 	then
 	\begin{align}\label{eq-l-07}
 		\Big|\frac{S}{\bar{V}}-\frac{S_{i}}{\bar{V}_{i}}\Big|\leq \Big|\frac{S}{\bar{V}}-\frac{S_{i}}{\bar{V}}\Big|+\Big|\frac{S_{i}}{\bar{V}}-\frac{S_{i}}{\bar{V}_{i}}\Big|
 		\leq \frac{2}{\sigma}\sum_{m\in N_{i}}|X_{m}|+|S_{i}|\cdot \Big|\frac{1}{\bar{V}}- \frac{1}{\bar{V}_{i}} \Big|.
 	\end{align}
  It follows from the definition of $\bar{V}$, $\bar{V}_{i}$ and (\ref{eq-l-08}) that
 	\begin{align}\label{eq-l-09}
         		|\bar{V}-\bar{V}_{i}|&=\Big|\psi\Big(\sum_{k\in [n]}\sum_{l\in A_{k}}X_{k}X_{l}\Big)-\psi\Big(\sum_{k\in N_{i}^{c}}\sum_{l\in A_{k}\cap N_{i}^{c}}X_{k}X_{l}\Big)\Big|\nonumber\\
 		&\leq \sqrt{\Big|\sum_{k\in [n]}\sum_{l\in A_{k}}X_{k}X_{l}- \sum_{k\in N_{i}^{c}}\sum_{l\in A_{k}\cap N_{i}^{c}}X_{k}X_{l} \Big|}\\
 		&\leq \sqrt{\sum_{k\in N_{i}}\sum_{l\in A_{k}}|X_{k}X_{l}|+\sum_{k\in N_{i}}\sum_{l\in N_{l}}|X_{k}X_{l}|}=\sigma T_{i}.\nonumber
 	\end{align}
 	Moreover, recall that $\bar{V}\geq \sigma/2$ and $\bar{V}_{i}\geq \sigma/2$, then
 	\begin{align}\label{eq-l-10}
 		|S_{i}|\cdot \Big|\frac{1}{\bar{V}}- \frac{1}{\bar{V}_{i}} \Big|\leq \frac{4|S_{i}|}{\sigma}.
 	\end{align}
 	Combining (\ref{eq-l-07}), (\ref{eq-l-09}) and (\ref{eq-l-10}) yields
 	\begin{align}\label{eq-l-11}
 		\Big|\frac{S}{\bar{V}}-\frac{S_{i}}{\bar{V}_{i}}\Big|\leq \sum_{m\in N_{i}}\frac{2|X_{m}|}{\sigma}+\frac{4|S_{i}|}{\sigma}\cdot \min\{1,T_{i}\}=\sum_{m\in N_{i}}\frac{2|X_{m}|}{\sigma}+\frac{4|S_{i}|Q_{i}}{\sigma},
 	\end{align}
 	where $Q_{i}=\min\{1,T_{i}\}.$ By (\ref{eq-l-11}), we have
 	\begin{align*}
 		& \mathbf{1}\left(z-\left|Y_{i}\right| / \bar{V} \leq \bar{W}_{2} \leq z+\left|Y_{i}\right| / \bar{V}\right)\nonumber\\
 		&\quad\leq \mathbf{1}\Big(z-\sum_{m\in A_{i}\cup N_{i}}\frac{4|X_{m}|}{\sigma}-\frac{4|S_{i}|Q_{i}}{\sigma}  \leq \frac{S_{i}}{\bar{V}_{i}} \leq z+\varepsilon+\sum_{m\in A_{i}\cup N_{i}}\frac{4|X_{m}|}{\sigma}+\frac{4|S_{i}|Q_{i}}{\sigma} \Big).
 	\end{align*}
 	Combining the foregoing inequality,  
  Proposition \ref{prop:2} and (\ref{eq-yy-14})--(\ref{eq-yy-16}), we obtain
 	\begin{align}\label{eq-l-13}
        |R_{31}|&\leq \frac{C\kappa^{2}}{\varepsilon\sigma^3} \sum_{i \in [n]} \mathbb{E}\Big\{|X_{i}|^{3}\mathbf{1}\Big(z-\sum_{m\in A_{i}\cup N_{i}}\frac{4|X_{m}|}{\sigma}-\frac{4|S_{i}|Q_{i}}{\sigma}  \leq \frac{S_{i}}{\bar{V}_{i}}\nonumber\\
&\qquad\qquad\qquad\qquad\qquad\qquad\leq z+\varepsilon+\sum_{m\in A_{i}\cup N_{i}}\frac{4|X_{m}|}{\sigma}+\frac{4|S_{i}|Q_{i}}{\sigma} \Big)\Big\}\nonumber\\
 		&\leq \frac{C\lambda\kappa^{2}}{\varepsilon\sigma^4} \sum_{i \in [n]}\sum_{m\in A_{i}\cup N_{i}}\|X_{i}\|_{4}^{3}\|X_{m}\|_{4}+\frac{C\lambda\kappa^{2}}{\varepsilon\sigma^3} \sum_{i \in [n]} \|X_{i}\|_{4}^{3}\cdot\frac{\kappa^{2}}{\sigma^{3}}\sum_{i\in [n]}\|X_{i}\|_{4}^{3}\nonumber\\
 	&\quad+\frac{C\lambda\kappa^{2}}{\varepsilon\sigma^3} \sum_{i \in [n]} \|X_{i}\|_{4}^{3}\cdot  \frac{\kappa^{1/2}(\kappa+\tau^{1/2})}{\sigma^{2}}\big(\sum_{i\in [n]}\|X_{i}\|_{4}^{4}  \big)^{1/2} \\
 	&\quad+\frac{C\lambda\kappa^{2}}{\varepsilon\sigma^3} \sum_{i \in [n]} \|X_{i}\|_{4}^{3}\cdot \Big( \sum_{k\in N_{i}}\sum_{l\in A_{k}\cup N_{k}}\frac{\|X_{k}\|_{4}\|X_{l}\|_{4}}{\sigma^{2}}  \Big)^{1/2}\nonumber\\
       &\quad+\frac{C\lambda\kappa^{2}}{\sigma^3} \sum_{i \in [n]} \|X_{i}\|_{4}^{3}\nonumber\\
 		&\leq \frac{C\lambda\kappa^{2}}{\sigma^3} \sum_{i \in [n]} \|X_{i}\|_{4}^{3}+\frac{C\lambda\kappa^{1/2}(\kappa+\tau^{1/2})}{\sigma^{2}}\big(\sum_{i\in [n]}\|X_{i}\|_{4}^{4}  \big)^{1/2},	\nonumber
 	\end{align}
 	where
 	\[
 	\lambda=\frac{\kappa}{\sigma^{2}}\sum_{i\in [n]}\|X_{i}\|_{2}^{2}.
 	\]
   For $R_{32}$,	with similar arguments as that leading to (\ref{eq-l-13}),  we have
 	\begin{align}\label{eq-l-14}
 		&R_{32}\leq \frac{C\lambda\kappa^{2}}{\varepsilon\sigma^3} \sum_{i \in [n]} \|X_{i}\|_{4}^{3}+\frac{C\lambda\kappa^{1/2}(\kappa+\tau^{1/2})}{\sigma^{2}}\big(\sum_{i\in [n]}\|X_{i}\|_{4}^{4}  \big)^{1/2}.
 	\end{align}
 	Combining (\ref{eq-l-06}), (\ref{eq-l-13})  and (\ref{eq-l-14}) yields
 	\begin{align}\label{eq-l-15}
 		|R_{3}|\leq C\lambda \Big\{\frac{\kappa^{2}}{\sigma^{3}}\sum_{i \in[n]}\|X_{i}\|_{4}^{3}+\frac{\kappa^{1 / 2}(\kappa+\tau^{1/2})}{\sigma^{2}}\big(\sum_{i \in[n]} \|X_i\|_{4}^4\big)^{1 / 2}\Big\}.
 	\end{align}
 	As for $R_{4}$, by Lemma \ref{lem-R4}, we have
 	\begin{align}\label{eq-l-16}
 		|R_{4}|\leq \frac{C\kappa^{2}}{\sigma^{3}}   \sum_{i\in [n]}\e|X_{i}|^{3}+\frac{C\kappa^{3}}{\sigma^{4}}   \sum_{i\in [n]}\e|X_{i}|^{4}.
 	\end{align}
 	We complete the proof of Proposition \ref{prop-BE-bar-W2} by  (\ref{eq-l-03}), (\ref{eq-l-04}), (\ref{eq-l-05}), (\ref{eq-l-15}) and (\ref{eq-l-16}).
 \end{proof}
Now, we are ready to prove Theorem \ref{thm-main2}.
 \begin{proof}[Proof of Theorem \ref{thm-main2}]
      Assume that 
 \begin{align*}
 	\frac{\kappa^{2}}{\sigma^{3}}\sum_{i\in [n]}\|X_{i}\|_{4}^{3}\leq \frac{1}{500},
 \end{align*}
 otherwise (\ref{eq-thm-03}) holds  trivially.   Define
 \[
 \tilde{V}=\psi\big(\sum_{i\in [n]}(X_{i}Y_{i}-\bar{X}\bar{Y})\big)\quad\text{and}\quad \tilde{W}_{2}=S/\tilde{V},
 \]
 then by the triangle inequality, we have
 \begin{align}\label{eq-m-01}
 \begin{aligned}
      	&\sup_{z\in \R}|\p(W_{2}\leq z)-\Phi(z)|\\
        &\quad\leq \sup_{z\in \R}| \p(W_{2}\leq z)-\p(\tilde{W}_{2}\leq z)|+\sup_{z\in \R}| \p(\tilde{W}_{2}\leq z)-\p(\bar{W}_{2}\leq z)|\\
 	&\quad\quad+\sup_{z\in \R}|\p(\bar{W}_{2}\leq z)-\Phi(z)|.
 \end{aligned}
 \end{align}
 For the first term of (\ref{eq-m-01}), we have
 \begin{align}\label{eq-m-02}
 \begin{aligned}
     &\sup_{z\in \R}| \p(W_{2}\leq z)-\p(\tilde{W}_{2}\leq z)|\\
 	&\quad\leq \p\big(\sum_{i\in [n]}(X_{i}Y_{i}-\bar{X}\bar{Y})\leq \sigma^{2}/4\big)+\p\big(\sum_{i\in [n]}(X_{i}Y_{i}-\bar{X}\bar{Y})\geq 2\sigma^{2}\big)\\
 	&\quad\leq \p\big(\sum_{i\in [n]}X_{i}Y_{i}\leq n|\bar{X}\bar{Y}|+\sigma^{2}/4\big)+\p(\sum_{i\in [n]}X_{i}Y_{i}\geq -n|\bar{X}\bar{Y}|+2\sigma^{2})\\
 	&\quad\leq \p\big(\sum_{i\in [n]}X_{i}Y_{i}\leq \sigma^{2}/2\big)+\p\big(\sum_{i\in [n]}X_{i}Y_{i}\geq 7\sigma^{2}/4\big)+2\p(n|\bar{X}\bar{Y}|\geq \sigma^{2}/4).  
 \end{aligned}	
 \end{align}
 Note that $\sigma^{2}=\e S^{2}=\sum_{i\in [n]} X_{i}Y_{i}$, then by Lemma \ref{lem-XiYi-moment}, we have
 \begin{align}\label{eq-m-03}
 \begin{aligned}
      	\p\Big(\sum_{i\in [n]}X_{i}Y_{i}\leq \sigma^{2}/2\Big) &\leq \p\Big( \Big|\sum_{i\in [n]}(X_{i}Y_{i}-\e\{X_{i}Y_{i}\})\Big|\geq \frac{\sigma^{2}}{2}   \Big)\\
 	&\leq \frac{C(\kappa^{3/2}+\kappa^{1/2}\tau^{1/2})}{\sigma^{2}}\cdot \big(\sum_{i\in [n]}\|X_{i}\|_{4}^{4}\big)^{1/2}.
 \end{aligned}
 \end{align}
 Similarly,
 \begin{align}\label{eq-m-04}
 	\p\Big(\sum_{i\in [n]}X_{i}Y_{i}\geq 7\sigma^{2}/4\Big)\leq \frac{C(\kappa^{3/2}+\kappa^{1/2}\tau^{1/2})}{\sigma^{2}}\cdot \Big(\sum_{i\in [n]}\|X_{i}\|_{4}^{4}\Big)^{1/2}.
 \end{align}
 As for $\p(n|\bar{X}\bar{Y}|\geq \sigma^{2}/4)$, by Lemma \ref{lem-fourth-moment-for-S},  we have
 \begin{align}\label{eq-m-05}
 	\p(n|\bar{X}\bar{Y}|\geq \sigma^{2}/4)\leq \frac{4}{\sigma^{2}}\cdot \e|S\bar{Y}|\leq \frac{4\|\bar{Y}\|_{4}}{\sigma}\leq 8\lambda\kappa n^{-1}.
 \end{align}
 On the other hand, note that by H\"{o}lder's inequality, we have 
 \begin{align*}
   \sigma^{3}= \Big(\sum_{i\in [n]}\sum_{j\in A_{i}} \e \{X_{i}X_{j}\}\Big)^{3/2}&\leq  \kappa^{1/2}n^{1/2}\sum_{i\in [n]}\sum_{j\in A_{i}} \e\{|X_{i}|^{3/2}|X_{j}|^{3/2}\}\nonumber\\
   &\leq \kappa^{3/2}n^{1/2}\sum_{i\in [n]} \e|X_{i}|^{3},
 \end{align*}
 which further implies 
 \begin{align}\label{eq-m-07}
 \kappa^{1/2}n^{-1/2}\leq \frac{\kappa^{2}}{\sigma^{3}}\sum_{i\in [n]} \e|X_{i}|^{3}.
 \end{align}
 Combining (\ref{eq-m-02})--(\ref{eq-m-07}), we have
 \begin{align}\label{eq-m-08}
 \begin{aligned}
     &\sup_{z\in \R}| \p(W\leq z)-\p(\tilde{W}_{2}\leq z)|\\
&\quad \leq \frac{C\kappa^{1/2}(\kappa^{1/2}+\tau^{1/2})}{\sigma^{2}}\cdot \big(\sum_{i\in [n]}\|X_{i}\|_{4}^{4}\big)^{1/2}+\frac{C\lambda\kappa^{2}}{\sigma^{3}}\sum_{i\in [n]} \|X_{i}\|_{4}^{3}.
 \end{aligned} 	
 \end{align}
 By (\ref{eq-m-08}) and Proposition \ref{prop-BE-bar-W2}, in order to prove Theorem \ref{thm-main2}, it suffices to provide an upper bound for $\sup_{z\in \R}| \p(\tilde{W}_{2}\leq z)-\p(\bar{W}_{2}\leq z)|.$   Let $\varepsilon_{1}=\kappa^{1/2}\lambda n^{-1/2}$, then 
 \begin{align}\label{eq-m-09}
 \begin{aligned}
      	\p(\tilde{W}_{2}\leq z)-\p(\bar{W}_{2}\leq z)&\leq \e  \{h_{z, \varepsilon_{1}}(\tilde{W}_{2})\}-\e  \{h_{z, \varepsilon_{1}}(\bar{W}_{2})\}+\p(z\leq \bar{W}_{2}\leq z+\varepsilon_{1})\\
 	&:=Q_{1}+Q_{2},
 \end{aligned}
 \end{align}
 where $h_{z,\varepsilon_{1}}$ is defined by (\ref{eq-defininition-of-h}) and 
 \begin{align*}
Q_{1}&=\e \{ h_{z, \varepsilon_{1}}(\tilde{W}_{2})\}-\e \{ h_{z, \varepsilon_{1}}(\bar{W}_{2})\},\quad Q_{2}=\p(z\leq \bar{W}_{2}\leq z+\varepsilon_{1}).
 \end{align*}
 It follows from the definition of $\tilde{V}$ and $\bar{V}$ that
 \begin{align}\label{eq-m-10}
 	|Q_{1}|\leq \|h'\|_{\infty}\e|\tilde{W}_{2}-\bar{W}_{2}|\leq \frac{1}{\varepsilon_{1}}\e\Big\{|S|\cdot\Big|\frac{1}{\tilde{V}}-\frac{1}{\bar{V}}\Big|\Big\}\leq \frac{4}{\varepsilon_{1}\sigma^{3}}\e|S^{2}\bar{Y}|.
 \end{align}
 By Lemma \ref{lem-fourth-moment-for-S} and the inequality $ab\leq ca^{2}/2+b^{2}/(2c)$ with $a=S, b=S\cdot\sum_{i\in [n]}Y_{i}, c=\kappa\lambda\sigma$, we have
 \begin{align}\label{eq-m-11}
 	\e\Big|S^{2}\big(\sum_{i\in [n]}Y_{i}\big)\Big|\leq \frac{\kappa\lambda\sigma}{2}\cdot \e|S|^{2}+\frac{1}{2\kappa\lambda\sigma}\e\Big|S^{2}\big(\sum_{i\in [n]}Y_{i}\big)^{2}\Big|\leq 13\kappa\lambda\sigma^{3}.
 \end{align}
  Combining (\ref{eq-m-10}) and (\ref{eq-m-11}) yields
 \begin{align}\label{eq-m-12}
 	|Q_{1}|\leq C\kappa^{1/2}\lambda n^{-1/2}\leq \frac{C\lambda\kappa^{2}}{\sigma^{3}}\sum_{i\in [n]} \|X_{i}\|_{4}^{3}.
 \end{align}
 As for $Q_{2}$, observe that
 \begin{align}\label{eq-m-13}
 \begin{aligned}
      	|Q_{2}|&\leq |\Phi(z+\varepsilon_{1})-\Phi(z)|+2\sup_{z\in\R}|\p(\bar{W}_{2}\leq z)-\Phi(z)|\\
 	& \leq \frac{C\lambda\kappa^{2}}{\sigma^{3}}\sum_{i\in [n]} \|X_{i}\|_{4}^{3}+2\sup_{z\in\R}|\p(\bar{W}_{2}\leq z)-\Phi(z)|.
 \end{aligned}
 \end{align}
 By  (\ref{eq-m-09}), (\ref{eq-m-12}) and (\ref{eq-m-13}), we obtain an upper bound for $\p(\tilde{W}_{2}\leq z)-\p(\bar{W}_{2}\leq z)$, i.e.,
 \begin{align*}
 	\p(\tilde{W}_{2}\leq z)-\p(\bar{W}_{2}\leq z)\leq \frac{C\lambda\kappa^{2}}{\sigma^{3}}\sum_{i\in [n]} \|X_{i}\|_{4}^{3}+2\sup_{z\in\R}|\p(\bar{W}_{2}\leq z)-\Phi(z)|.
 \end{align*}
 The lower bound can be derived by a similar argument. Together with Proposition \ref{prop-BE-bar-W2}, we complete the proof of Theorem \ref{thm-main2}.
 \end{proof}

\subsection{Proofs of Theorems \textup{\ref{thm-graph-dependency}}, \textup{\ref{thm-distributed-U-statistics}}, \textup{\ref{thm-constrained-U-statistics}} and \textup{\ref{thm-app4}}} 
In this subsection, we present the proofs of  Theorems \textup{\ref{thm-graph-dependency}}, \textup{\ref{thm-distributed-U-statistics}}, \textup{\ref{thm-constrained-U-statistics}} and \textup{\ref{thm-app4}}.
\subsubsection{Proofs of Theorems  \textup{\ref{thm-graph-dependency}} and  \textup{\ref{thm-constrained-U-statistics}}}
\begin{proof}[Proof of Theorem \ref{thm-graph-dependency}]
   Let $A_{ij}=A_{i}\cup A_{j}$, then  $A_{ij}$ satisfies (LD2). Observe that $N_{i}=\{k\in \mathcal{V}: i\in A_{k}\}=A_{i}$ and for any $k\in [n]$,
	\begin{align}\label{eq-m1a-01}
	    \sum_{i\in [n]}\sum_{j\in A_{i}}\mathbf{1}(k\in A_{ij})\leq \sum_{i\in [n]}\sum_{j\in A_{i}}\mathbf{1}(k\in A_{i})+\sum_{i\in [n]}\sum_{j\in A_{i}}\mathbf{1}(k\in A_{i})\leq 2\kappa^{2},
	\end{align}
	which implies $\kappa\leq 2d$ and $\tau\leq 4d^{2}$. Applying  Theorems \ref{thm-main1-1} and \ref{thm-main2}, respectively, we have (\ref{eq-app2-01}) and (\ref{eq-app2-02}). 
\end{proof}
\begin{proof}[Proof of Theorem \textup{\ref{thm-constrained-U-statistics}}]
    By assumption $\lim_{n\to \infty}\text{Var}(U_{n}(f;\DD)/n^{2b-1}=\sigma_{f,\DD}^{2}>0$, there exists $N$ such that for any $n\geq N$, we have $\sigma_{n}^{2}:=\text{Var}(U_{n}(f; \DD))\geq n^{2b-1}\sigma_{f,\DD}^{2}/2.$
So to apply Theorems \ref{thm-main1-1} and  \ref{thm-main2}, we only need to estimate the order of $\kappa$ and $\tau.$ For any $i,j\in\II$, define
\begin{align*}
  A_{ij1}&=\{k\in \II,  s(k)\cap s(i)=\varnothing\ \&\  s(k)\cap  s(j)=\varnothing \ \text{and}\ |k-i|\leq m\},\nonumber\\
  A_{ij2}&=\{k\in \II,  s(k)\cap s(i)=\varnothing\ \&\  s(k)\cap  s(j)=\varnothing \ \text{and}\ |k-j|\leq m\}.
\end{align*}
Now for $i\in\II$ and $j\in A_{i}$, let $A_{ij}=A_{i1}\cup A_{j1}\cup A_{ij1}\cup A_{ij2}.$ It can be verified that $A_{i}$ and $A_{ij}$ satisfy (LD1) and (LD2). Note that $A_{ij1}\subset A_{i2}$ and $A_{ij2}\subset A_{j2}$,  to estimate the order of $A_{i}$ and $A_{ij}$, it suffices to estimate the number of $A_{i1}$ and $A_{i2}$. In fact, note that $l$ and $m$ are fixed, then
\begin{align}\label{eq-p-app2-2}
\begin{aligned}
      |A_{i1}|&=\sum_{p=1}^{l} |\{k\in \II, | s(k)\cap s(i)|=p\}|\\
  &\leq l\cdot |\{k\in \II, | s(k)\cap s(i)|=1\}|= O(n^{b-1}).
\end{aligned}
\end{align}
By the definition of $A_{i2}$, for any $j\in A_{i2}$, there exist $p\in s(i)$ and $q\in  s(j)$ such that $|p-q|\leq m$, which implies that
\begin{align}\label{eq-p-app2-3}
|A_{i2}|=O(n^{b-1}).
\end{align}
Combine (\ref{eq-p-app2-2}) and (\ref{eq-p-app2-3}), we have
\begin{align}\label{eq-p-app2-4}
  |A_{i}|\leq |A_{i1}|+|A_{i2}|=O(n^{b-1})\quad \text{and}\quad |A_{ij}|\leq |A_{i}|+|A_{j}|=O(n^{b-1}).
\end{align}
As for $N_{i}$, by the definition of $A_{i}$, we have
\begin{align}\label{eq-p-app2-5}
\begin{aligned}
     N_{i}&=\{k\in\II, i\in A_{k}\}\\
     &=\{k\in\II, i\in A_{k1}\}\cup \{k\in\II, i\in A_{k2}\}\\
&=\{k\in\II,  s(k)\cap s(i)\neq \varnothing\}\cup \{k\in\II,  s(k)\cap s(i)=\varnothing\ \text{and}\ |i-k|\leq m \}\\
&=A_{i1}\cup A_{i2}=A_{i}.
\end{aligned}
\end{align}
Combining (\ref{eq-p-app2-4}) and (\ref{eq-p-app2-5}) yields $\kappa=O(n^{b-1}).$ For $\tau$, note that $A_{ij}\subset A_{i}\cup A_{j}$, then
\begin{align*}
\sum_{i\in \II}\sum_{j\in A_{i}} \mathbf{1}(k\in A_{ij})\leq \sum_{i\in \II}\sum_{j\in A_{i}} \mathbf{1}(k\in A_{i})+\mathbf{1}(k\in A_{j})\leq 2\kappa^{2}.
\end{align*}
Then $\tau=O(n^{2b-2}).$ This proves Theorem \ref{thm-constrained-U-statistics} by Theorems \ref{thm-main1-1} and \ref{thm-main2}. 
\end{proof}
\subsubsection{Proof of Theorem \textup{\ref{thm-distributed-U-statistics}}}
Before proving Theorem \ref{thm-distributed-U-statistics}, we first apply Theorem \ref{thm-main1} to obtain a more general Berry--Esseen bound for distributed statistical inference. For any $1\leq i\leq k$, let $\JJ_{i}$ be a subset of $\mathbb{N}_{+}$. Moreover, let 
\[
\II_{i}=\{i\}\times \JJ_{i}\quad \text{and}\quad \II=\cup_{i=1}^{k}\II_{i}.
\]
We consider a filed of locally dependent random variables $\{X_{ij}: (i,j)\in \II\}$. Suppose that 
\begin{itemize}
   \item [(LD-D1)]  For any $(i,j)\in \II$, there exists $A_{i,j} \subset \JJ_{i}$ satisfying that $X_{ij}$ is independent of $\left\{X_{kl}\in \II: l\notin A_{i,j}\right\}$.
 \item [(LD-D2)]  For any $(i,j)\in \II$ and $(i,k)\in \{i\}\times A_{i,j}$, there exists $ A_{i,j}\subset A_{i, jk}\subset \JJ_{i}$ satisfying that $\left\{X_{ij}, X_{ik}\right\}$ is independent of $\{X_{pq}\in \II: q \notin A_{i,jk}\}$.
\end{itemize}
Assume that $\e X_{ij}=0$ for any $(i,j)\in \II.$ Let $S=\sum_{i=1}^{k}\sum_{j\in \JJ_{i}}X_{ij}$ and $\sigma^{2}=\mathrm{Var}(S)$. Moreover, for any $(i,j)\in\II$, let $N_{i,j}=\{(p,q)\in \II: (i,j)\in \{p\}\times A_{p,q}\}=\{(i,q): q\in \JJ_{i}, j\in A_{i,q}\}$. In addition, for any $1\leq i\leq k$, let $\kappa_{i}$ and $\tau_{i}$ be two positive constants such that 
\begin{align*}
\kappa_{i}&=\max\Big\{\sup_{j\in \JJ_{i}} |N_{i,j}|, \sup_{(j,k):j\in \JJ_{i}, k\in A_{i,j}} |A_{i,jk}| \Big\},\\\tau_{i}&=\max_{j\in \JJ_{i}} \big|(p,q)\in \JJ_{i}\times \JJ_{i};\ q\in A_{i,p},\  j\in A_{i,pq}\big|.
\end{align*}  We have the following proposition.
\begin{prop}\label{prop-distributed-inference}
If $\e|X_{ij}|^{4}<\infty$ for any $1\leq i\leq k$, $j\in \JJ_{i}$, then
\begin{align*}
	\sup_{z\in\R}|\p(S/\sigma\leq z)-\Phi(z)|\leq \frac{C}{\sigma^{3}}\sum_{i=1}^{k}\sum_{j\in \JJ_{i}}\kappa_{i}^{2}\|X_{ij}\|_{4}^{3}+\frac{C}{\sigma^{2}} \Big(\sum_{i=1}^{k}\sum_{j\in \JJ_{i}}(\kappa_{i}^{3}+\kappa\tau_{i})\|X_{ij}\|_{4}^{4} \Big)^{1/2}.
\end{align*}
\end{prop}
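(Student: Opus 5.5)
The plan is to apply Theorem \ref{thm-main1} directly to the field $\{X_{ij}:(i,j)\in\II\}$, viewed as a single index set. The first step is to build the neighbourhoods. For $(i,j)\in\II$ we set $\mathbf A_{(i,j)}=\{i\}\times A_{i,j}$, and for $(i,k)\in\mathbf A_{(i,j)}$ we set $\mathbf A_{(i,j),(i,k)}=\{i\}\times A_{i,jk}$.

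By (LD-D1), $X_{ij}$ is independent of all $X_{pq}$ with $(p,q)\notin\mathbf A_{(i,j)}$. Note that (LD-D1) is stated only through the second coordinate $l\notin A_{i,j}$; I will read it, as intended, as independence of $X_{ij}$ from all variables outside block $i$ together with block-$i$ variables whose index lies outside $A_{i,j}$. The blocks are built from disjoint subsamples, hence are independent. This gives (LD1), and (LD2) follows in the same way from (LD-D2).

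The key structural fact is that every neighbourhood lives inside a single block. Hence the sets $N_{(i,j)}$, $\mathbf A_{(i,j),(i,k)}$ and $D_{(i,j)}$ are exactly the block-$i$ objects. In particular $|N_{(i,j)}|,|\mathbf A_{(i,j),(i,k)}|\le\kappa_i$ and $|D_{(i,j)}|\le\tau_i$.

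The second step is to bound $\beta_1,\beta_2,\beta_3$ of Theorem \ref{thm-main1} block by block, rather than with the global $\kappa,\tau$. This is the reason for invoking Theorem \ref{thm-main1} rather than Theorem \ref{thm-main1-1}. For $\beta_1$, every index appearing in a term with outer index $(i,j)$ lies in block $i$, so $|\mathbf A_{(i,j)}|\le\kappa_i$ and $\beta_1\le 2\sigma^{-3}\sum_i\sum_j\kappa_i^2\|X_{ij}\|_4^3$.

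For $\beta_2^2$ I apply Young's inequality to each product of four norms, e.g.\ $\|X_a\|_4\|X_b\|_4\|X_c\|_4\|X_d\|_4\le\frac14\sum\|X_\cdot\|_4^4$. The number of tuples in which a fixed index appears is at most $C(\kappa_i^3+\kappa_i\tau_i)$, as in the proof of Theorem \ref{thm-main1-1}. The $\tau_i$ factor comes from counting the pairs $(p,q)$ with $j\in A_{i,pq}$. This gives $\beta_2\le C\sigma^{-2}\big(\sum_i\sum_j(\kappa_i^3+\kappa_i\tau_i)\|X_{ij}\|_4^4\big)^{1/2}$.

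For $\beta_3$ I mimic (\ref{eq-pr-03})--(\ref{eq-pr-04}) within block $i$. The inner double sum $\sum_{j'\in A}\sum_{k\in N_{j'}}\|X_{j'}\|_4\|X_k\|_4$ is bounded by $\big(C(\kappa_i^3+\kappa_i\tau_i)\sum_{j}\|X_{ij}\|_4^4\big)^{1/2}$, using only block-$i$ quantities. Then $\beta_3^2\le\beta_1'\cdot\beta_2'$, a product of a $\kappa_i^2$-weighted cubic sum and a block-wise fourth-moment root. Applying $\sqrt{ab}\le a+b$ gives that $\beta_3$ is bounded by the same two terms.

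The main obstacle is precisely this $\beta_3$ step. After Cauchy--Schwarz, the block-$i$ root is multiplied by $\kappa_i^2\|X_{ij}\|_4^3$ and then summed over blocks, and I must avoid ending up with $\max_i$ in place of a weighted sum. My first attempt is to bound the root crudely by the global quantity $\Gamma:=\big(\sum_{i'}\sum_{j'}C(\kappa_{i'}^3+\kappa_{i'}\tau_{i'})\|X_{i'j'}\|_4^4\big)^{1/2}$, which dominates every block-$i$ root. This yields $\beta_3^2\le C\,\sigma^{-3}\sum\kappa_i^2\|X_{ij}\|_4^3\cdot\Gamma/\sigma^2$, and AM--GM finishes it.

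The leftover term $\kappa\tau_i$ in the statement is presumably a typo for $\kappa_i\tau_i$; the block-wise counts above give $\kappa_i\tau_i$, which is smaller. Finally, combining the bounds on $\beta_1,\beta_2,\beta_3$ in Theorem \ref{thm-main1} gives the proposition.
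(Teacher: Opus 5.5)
Your proposal is correct and follows the paper's proof essentially step for step. The paper also applies Theorem \ref{thm-main1} to the field $\{X_{ij}\}$, bounds $\beta_1$ and $\beta_2^2$ by block-wise counting with $\kappa_i,\tau_i$, and handles $\beta_3$ by bounding each block-$i$ Cauchy--Schwarz root (as in (\ref{eq-kl1-02})--(\ref{eq-kl1-03})) by the global fourth-moment sum before an AM--GM step. Your reading of (LD-D1)--(LD-D2) as including independence across blocks, and your use of $\kappa_i\tau_i$ in place of $\kappa\tau_i$, match what the paper uses implicitly (its display (\ref{eq-kl1-001}) already has $\kappa_i\tau_i$), so nothing is missing.
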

\begin{proof}
 Note that $\{X_{ij}, (i,j)\in \II\}$ can be regarded as a field of locally dependent random variables satisfying conditions (LD1) and (LD2). We apply Theorem \ref{thm-main1} to prove Proposition \ref{prop-distributed-inference}. To this end, we only need to provide  upper bounds for $\beta_{1}$, $\beta_{2}$ and $\beta_{3}.$ Some simple algebraic calculations give that
 \begin{align}\label{eq-kl1-001}
 	\beta_{1}\leq \frac{C}{\sigma^{3}}\sum_{i=1}^{k}\sum_{j\in \JJ_{i}}\kappa_{i}^{2}\|X_{ij}\|_{4}^{3}\quad \text{and}\quad 	\beta_{2}^{2}\leq \frac{C}{\sigma^{4}} \sum_{i=1}^{k}\sum_{j\in \JJ_{i}}(\kappa_{i}^{3}+\kappa_{i}\tau_{i})\|X_{ij}\|_{4}^{4}.
 \end{align}
 As for $\beta_{3}$, note that for any $(i,j),(i,l)\in \II$, by Cauchy inequality, we have 
 \begin{align}\label{eq-kl1-02}
 \begin{aligned}
     &\sum_{p\in A_{i,j}\cup N_{i,j}\cup N_{i,l}}\sum_{q\in N_{i,p}}\|X_{ip}\|_{4}\|X_{iq}\|_{4}\\
 	&\leq C\Big(\sum_{p\in A_{i,j}\cup N_{i,j}\cup N_{i,l}}\sum_{q\in N_{i,p}}(\|X_{ip}\|_{4}^{2}+\|X_{iq}\|_{4}^{2})^{2}\Big)^{1/2}\Big(\sum_{p\in A_{i,j}\cup N_{i,j}\cup N_{i,l}}\sum_{q\in N_{i,p}}1\Big)^{1/2}\\
 	&\leq C\Big(\kappa_{i}^{3}\sum_{p\in \JJ_{i}}\|X_{ip}\|_{4}^{4}\Big)^{1/2}\leq C \Big(\sum_{i=1}^{k}\sum_{j\in \JJ_{i}}(\kappa_{i}^{3}+\kappa\tau_{i})\|X_{ij}\|_{4}^{4}\Big)^{1/2}.
 \end{aligned}	
 \end{align}
 Similarly,  for any $(i,j)\in \II$, let 
 \begin{align*}
  D_{(i,j)}&=\{(p,q_{1},q_{2}); q_{2}\in A_{p,q_{1}}, \{i,j\}\in \{p\}\times A_{p,q_{1}q_{2}}  \}\nonumber\\
  &=\{(q_{1},q_{2})\in \JJ_{i}\times \JJ_{i}; q_{2}\in A_{i,q_{1}}, j\in  A_{i,q_{1}q_{2}}\},
 \end{align*}
 then
 \begin{align}\label{eq-kl1-03}
 \begin{aligned}
     &\sum_{(p,q)\in D_{(i,j)}} \|X_{ip}\|_{4}\|X_{iq}\|_{4}\\
     &\quad
     \leq 
    C\Big( \sum_{(p,q)\in D_{(i,j)}}(\|X_{ip}\|_{4}^{2}+\|X_{iq}\|_{4}^{2})^{2}\Big)^{1/2} \Big( \sum_{(p,q)\in D_{(i,j)}}1\Big)^{1/2}\\
    &\quad\leq C\Big( \tau_{i}\sum_{p\in \JJ_{i}}\sum_{q\in A_{i,p}} \big(\|X_{ip}\|_{4}^{4}+\|X_{iq}\|_{4}^{4}\big)\Big)^{1/2}\\
    &\quad\leq \frac{C}{\sigma^{2}} \Big(\sum_{i=1}^{k}\sum_{j\in \JJ_{i}}(\kappa_{i}^{3}+\kappa\tau_{i})\|X_{ij}\|_{4}^{4}\Big)^{1/2}.
 \end{aligned}    
 \end{align}
 Combining  (\ref{eq-kl1-001}), (\ref{eq-kl1-02}) and (\ref{eq-kl1-03}), we complete the proof of Proposition \ref{prop-distributed-inference}.
\end{proof}
Now, we are ready to prove Theorem \ref{thm-distributed-U-statistics}.
\begin{proof}[Proof of Theorem \ref{thm-distributed-U-statistics}]
    We apply Proposition \ref{prop-distributed-inference} to prove Theorem \ref{thm-distributed-U-statistics}.
 For each $1\leq i\leq k$, we denote $\mathcal{J}_{i}$ as the set consisting of all summation indicators in $i$-th subset, i.e.  $$\mathcal{J}_{i}:=\big\{j=(j_{1},j_{2},\cdots, j_{m})\in \R^{ m},\ 1\leq j_{1}<j_{2}<\cdots<j_{m}\leq n_{i}\ \text{and} \ X_{ij_{k}}\in \mathbb{S}_{i}, \forall k\in [m]\big\},$$
 then $|\JJ_{i}|=\binom{n_{i}}{m}.$
 Let $\mathcal{I}_{i}=\{i\}\times \JJ_{i}$ and $\mathcal{I}=\cup_{i=1}^{k}\mathcal{I}_{i}.$
Moreover, for any $j\in \mathcal{J}_{i}$, define $ s(j)=\{j_{1},j_{2},\cdots, j_{m}\}.$ To apply Proposition \ref{prop-distributed-inference}, we need to define $A_{i,j}$ and $A_{i,jk}$ so that the collection  $\{X_{ij}, (i,j)\in \II\}$ satisfies (LD-D1) and (LD-D2).  For any $(i,j)\in \mathcal{I}$, define
\[
A_{i,j}=\{p\in \mathcal{J}_{i}:  s(p)\cap s(j)\neq \varnothing\}.
\]
For each $(i,j)\in \mathcal{I}$ and $p\in A_{i,j}$, define
\[
A_{i,jk}=\{q\in \mathcal{J}_{i}:  s(q)\cap( s(j)\cup  s(p))\neq \varnothing\}.
\]
It is easy to check that, with $A_{i,j}$ and $A_{i,jk}$ defined above, the collection  $\{X_{ij}, (i,j)\in \II\}$ satisfies (LD-D1) and (LD-D2). We now derive upper bounds for $\kappa_{i}$ and $\tau_{i}.$
By the definition of $A_{i,j}$, we have
\begin{align*}
      |A_{i,j}|&=\binom{n_{i}}{m}-\binom{n_{i}-m}{m}=\sum_{t=1}^{m} \binom{n_{i}-m}{m-t}\binom{m}{t}\\
  &=\sum_{u=0}^{m-1} \frac{m}{u+1} \binom{n_{i}-m}{m-1-u}\binom{m-1}{u}\\
  &\leq m\binom{n_{i}-1}{m-1},
\end{align*}
where the second equality and the last inequality make use of the following equality: 
\begin{align*}
  \binom{n}{m}=\sum_{t=0}^{m} \binom{n-m}{m-t}\binom{m}{t},\quad \text{for all } n\geq m.
\end{align*}
 By the definition of $N_{i,j}$, we can check that $N_{i,j}=A_{i,j}$. In addition, note that
$A_{i,jk}\subset A_{i,j}\cup A_{i,k}$, then for any $(i,j)\in \mathcal{I}$, we have 
\begin{align*}
	&\sum_{p\in \mathcal{J}_{i}}\sum_{q\in A_{i,p}}\mathbf{1}\big(j\in  A_{i,pq}\big)
	\leq \sum_{p\in \mathcal{J}_{i}}\sum_{q\in A_{i,p}}\mathbf{1}\big(j\in  A_{i,p}\big)+\sum_{p\in \mathcal{J}_{i}}\sum_{q\in A_{i,p}}\mathbf{1}\big(j\in  A_{i,p}\big)
	\leq 2\kappa_{i}^{2}.
\end{align*}
  Combining above results, we have \begin{align}\label{eq-kl-04}
|\mathcal{J}_{i}|=\frac{n_{i}}{m}\binom{n_{i}-1}{m-1},\quad\kappa_{i}\leq Cm\binom{n_{i}-1}{m-1} \quad \text{and}\quad \tau_{i}\leq Cm^{2}\binom{n_{i}-1}{m-1}^{2}.
\end{align} Below we give a lower bound for  $\mathrm{Var}(U_{i})$.  Let $$
 W_{i}=\frac{m}{n_{i}}\sum_{j=1}^{n_{i}} g(X_{ij}),$$
then by Hoeffding decomposition (see \cite{Hoeffding19481}), we have 
\begin{align*}
	\mathrm{Var}(U_{N,i})\geq \mathrm{Var}(W_{i})=\frac{m^{2}}{n_{i}}\sigma_{1}^{2},
\end{align*}
which further implies
\begin{align}\label{eq-kl-01}
	\mathrm{Var}(W)=\frac{1}{m^{2}\sigma_{1}^{2}N}\sum_{i=1}^{k}n_{i}^{2}\mathrm{Var}(U_{N,i})\geq 1.
\end{align} 
  Applying Proposition \ref{prop-distributed-inference} and combining (\ref{eq-kl-04}) and (\ref{eq-kl-01}), we obtain Theorem \ref{thm-distributed-U-statistics}.
\end{proof}

\subsubsection{Proof of Theorem \textup{\ref{thm-app4}}}\label{sec-decorated} 
For any $\psi\in A_{\varphi}$, define 
\[
A_{\varphi,\psi}=\left\{\phi \in \mathcal{J}: e\left(G_{\phi} \cap (G_{\varphi}\cup G_{\psi})\right) \geq 1\right\}.
\]
Then $A_{\varphi,\psi}$ satisfies (LD2). Moreover, since $A_{\varphi,\psi}\subseteq A_{\varphi}\cup A_{\psi}$, it follows from \eqref{eq-m1a-01} that $\tau\le 2\kappa^{2}$. Finally, noting that $N_{\varphi}=A_{\varphi}$ and $|A_{\varphi}|=O(n^{v-2})$, we obtain $\kappa=O(n^{v-2})$. Consequently, Theorem \ref{thm-app4} follows from
Theorems \ref{thm-main1-1} and \ref{thm-main2}.

\begin{funding}
The research of Q.-M. Shao is partially supported by National Nature Science Foundation
of China NSFC 12031005 and Shenzhen Outstanding Talents Training Fund, China, and
the research of Z.-S. Zhang is partially supported by National Nature Science Foundation of China NSFC 12301183 and National Nature Science Found for Excellent Young
Scientists Fund.
\end{funding}

\bibliographystyle{imsart-nameyear} 
\bibliography{refs}       

\begin{thebibliography}{39}

\bibitem[\protect\citeauthoryear{Abraham, Delmas and
  Weibel}{2025}]{abraham2025probability}
\begin{barticle}[author]
\bauthor{\bsnm{Abraham},~\bfnm{Romain}\binits{R.}},
  \bauthor{\bsnm{Delmas},~\bfnm{Jean-Fran{\c{c}}ois}\binits{J.-F.}} \AND
  \bauthor{\bsnm{Weibel},~\bfnm{Julien}\binits{J.}}
(\byear{2025}).
\btitle{Probability-graphons: Limits of large dense weighted graphs}.
\bjournal{Innovations in Graph Theory}
\bvolume{2}
\bpages{25--117}.
\bdoi{10.5802/igt.7}
\end{barticle}
\endbibitem

\bibitem[\protect\citeauthoryear{Baldi and Rinott}{1989}]{BaldiandRinott1989}
\begin{barticle}[author]
\bauthor{\bsnm{Baldi},~\bfnm{Pierre}\binits{P.}} \AND
  \bauthor{\bsnm{Rinott},~\bfnm{Yosef}\binits{Y.}}
(\byear{1989}).
\btitle{{On Normal Approximations of Distributions in Terms of Dependency
  Graphs}}.
\bjournal{The Annals of Probability}
\bvolume{17}
\bpages{1646--1650}.
\bdoi{10.1214/aop/1176991178}
\end{barticle}
\endbibitem

\bibitem[\protect\citeauthoryear{Barbour, Karoński and
  Ruciński}{1989}]{BARBOUR1989125}
\begin{barticle}[author]
\bauthor{\bsnm{Barbour},~\bfnm{A.~D}\binits{A.~D.}},
  \bauthor{\bsnm{Karoński},~\bfnm{Michal}\binits{M.}} \AND
  \bauthor{\bsnm{Ruciński},~\bfnm{Andrzej}\binits{A.}}
(\byear{1989}).
\btitle{{A central limit theorem for decomposable random variables with
  applications to random graphs}}.
\bjournal{Journal of Combinatorial Theory, Series B}
\bvolume{47}
\bpages{125-145}.
\bdoi{https://doi.org/10.1016/0095-8956(89)90014-2}
\end{barticle}
\endbibitem

\bibitem[\protect\citeauthoryear{Bolthausen}{1982}]{Bolthausen1982}
\begin{barticle}[author]
\bauthor{\bsnm{Bolthausen},~\bfnm{E.}\binits{E.}}
(\byear{1982}).
\btitle{{On the Central Limit Theorem for Stationary Mixing Random Fields}}.
\bjournal{The Annals of Probability}
\bvolume{10}
\bpages{1047--1050}.
\bdoi{10.1214/aop/1176993726}
\end{barticle}
\endbibitem

\bibitem[\protect\citeauthoryear{B{\'o}na}{2007}]{bona2007copies}
\begin{bmisc}[author]
\bauthor{\bsnm{B{\'o}na},~\bfnm{Mikl{\'o}s}\binits{M.}}
(\byear{2007}).
\btitle{The copies of any permutation pattern are asymptotically normal}.
\bnote{Preprint. Available at \url{http://arxiv.org/abs/0712.2792}}.
\end{bmisc}
\endbibitem

\bibitem[\protect\citeauthoryear{Chen, Goldstein and
  Shao}{2011}]{chen2011normal}
\begin{bbook}[author]
\bauthor{\bsnm{Chen},~\bfnm{Louis H.~Y.}\binits{L.~H.~Y.}},
  \bauthor{\bsnm{Goldstein},~\bfnm{Larry}\binits{L.}} \AND
  \bauthor{\bsnm{Shao},~\bfnm{Qi-Man}\binits{Q.-M.}}
(\byear{2011}).
\btitle{Normal Approximation by Stein’s Method}.
\bseries{Probability and Its Applications}.
\bpublisher{Springer Berlin Heidelberg}.
\bdoi{10.1007/978-3-642-15007-4}
\end{bbook}
\endbibitem

\bibitem[\protect\citeauthoryear{Chen and Peng}{2021}]{chensongxi2021}
\begin{barticle}[author]
\bauthor{\bsnm{Chen},~\bfnm{Song~Xi}\binits{S.~X.}} \AND
  \bauthor{\bsnm{Peng},~\bfnm{Liuhua}\binits{L.}}
(\byear{2021}).
\btitle{{Distributed statistical inference for massive data}}.
\bjournal{The Annals of Statistics}
\bvolume{49}
\bpages{2851--2869}.
\bdoi{10.1214/21-AOS2062}
\end{barticle}
\endbibitem

\bibitem[\protect\citeauthoryear{Chen, R{\"o}llin and
  Xia}{2021}]{Chen-recursive}
\begin{barticle}[author]
\bauthor{\bsnm{Chen},~\bfnm{Louis H.~Y.}\binits{L.~H.~Y.}},
  \bauthor{\bsnm{R{\"o}llin},~\bfnm{Adrian}\binits{A.}} \AND
  \bauthor{\bsnm{Xia},~\bfnm{Aihua}\binits{A.}}
(\byear{2021}).
\btitle{{Palm theory, random measures and Stein couplings}}.
\bjournal{The Annals of Applied Probability}
\bvolume{31}
\bpages{2881--2923}.
\bdoi{10.1214/21-AAP1666}
\end{barticle}
\endbibitem

\bibitem[\protect\citeauthoryear{Chen and
  Shao}{2001}]{chenandshao2001non-uniform}
\begin{barticle}[author]
\bauthor{\bsnm{Chen},~\bfnm{Louis H.~Y.}\binits{L.~H.~Y.}} \AND
  \bauthor{\bsnm{Shao},~\bfnm{Qi-Man}\binits{Q.-M.}}
(\byear{2001}).
\btitle{A non-uniform Berry–Esseen bound via Stein’s method}.
\bjournal{Probability Theory and Related Fields}
\bvolume{120}
\bpages{236--254}.
\bdoi{10.1007/PL00008782}
\end{barticle}
\endbibitem

\bibitem[\protect\citeauthoryear{Chen and Shao}{2004}]{ChenandShao}
\begin{barticle}[author]
\bauthor{\bsnm{Chen},~\bfnm{Louis H.~Y.}\binits{L.~H.~Y.}} \AND
  \bauthor{\bsnm{Shao},~\bfnm{Qi-Man}\binits{Q.-M.}}
(\byear{2004}).
\btitle{{Normal approximation under local dependence}}.
\bjournal{The Annals of Probability}
\bvolume{32}
\bpages{1985--2028}.
\bdoi{10.1214/009117904000000450}
\end{barticle}
\endbibitem

\bibitem[\protect\citeauthoryear{Chen and Shao}{2007}]{chenandshao2007normal}
\begin{barticle}[author]
\bauthor{\bsnm{Chen},~\bfnm{Louis H.~Y.}\binits{L.~H.~Y.}} \AND
  \bauthor{\bsnm{Shao},~\bfnm{Qi-Man}\binits{Q.-M.}}
(\byear{2007}).
\btitle{{Normal approximation for nonlinear statistics using a concentration
  inequality approach}}.
\bjournal{Bernoulli}
\bvolume{13}
\bpages{581--599}.
\bdoi{10.3150/07-BEJ5164}
\end{barticle}
\endbibitem

\bibitem[\protect\citeauthoryear{Dufour and Olhede}{2024}]{dufour2024inference}
\begin{bmisc}[author]
\bauthor{\bsnm{Dufour},~\bfnm{Charles}\binits{C.}} \AND
  \bauthor{\bsnm{Olhede},~\bfnm{Sofia~C.}\binits{S.~C.}}
(\byear{2024}).
\btitle{Inference for decorated graphs and application to multiplex networks}.
\bnote{Preprint. Available at \url{http://arxiv.org/abs/2408.12339}}.
\end{bmisc}
\endbibitem

\bibitem[\protect\citeauthoryear{Eichelsbacher and
  Redno{\ss}}{2023}]{Peter-Kolmogorov-bounds-2023}
\begin{barticle}[author]
\bauthor{\bsnm{Eichelsbacher},~\bfnm{Peter}\binits{P.}} \AND
  \bauthor{\bsnm{Redno{\ss}},~\bfnm{Benedikt}\binits{B.}}
(\byear{2023}).
\btitle{{Kolmogorov bounds for decomposable random variables and subgraph
  counting by the Stein–Tikhomirov method}}.
\bjournal{Bernoulli}
\bvolume{29}
\bpages{1821--1848}.
\bdoi{10.3150/22-BEJ1522}
\end{barticle}
\endbibitem

\bibitem[\protect\citeauthoryear{Fang}{2016}]{Fang2016Multivariate}
\begin{barticle}[author]
\bauthor{\bsnm{Fang},~\bfnm{Xiao}\binits{X.}}
(\byear{2016}).
\btitle{A Multivariate CLT for Bounded Decomposable Random Vectors with the
  Best Known Rate}.
\bjournal{Journal of Theoretical Probability}
\bvolume{29}
\bpages{1510--1523}.
\bdoi{10.1007/s10959-015-0619-7}
\end{barticle}
\endbibitem

\bibitem[\protect\citeauthoryear{Fang}{2019}]{10.1214/19-EJP301}
\begin{barticle}[author]
\bauthor{\bsnm{Fang},~\bfnm{Xiao}\binits{X.}}
(\byear{2019}).
\btitle{{Wasserstein-2 bounds in normal approximation under local dependence}}.
\bjournal{Electronic Journal of Probability}
\bvolume{24}
\bpages{1--14}.
\bdoi{10.1214/19-EJP301}
\end{barticle}
\endbibitem

\bibitem[\protect\citeauthoryear{Flajolet, Szpankowski and
  Vall\'{e}e}{2006}]{flajolet2006hidden}
\begin{barticle}[author]
\bauthor{\bsnm{Flajolet},~\bfnm{Philippe}\binits{P.}},
  \bauthor{\bsnm{Szpankowski},~\bfnm{Wojciech}\binits{W.}} \AND
  \bauthor{\bsnm{Vall\'{e}e},~\bfnm{Brigitte}\binits{B.}}
(\byear{2006}).
\btitle{Hidden word statistics}.
\bjournal{J. ACM}
\bvolume{53}
\bpages{147–-183}.
\bdoi{10.1145/1120582.1120586}
\end{barticle}
\endbibitem

\bibitem[\protect\citeauthoryear{Hoeffding}{1948}]{Hoeffding19481}
\begin{barticle}[author]
\bauthor{\bsnm{Hoeffding},~\bfnm{Wassily}\binits{W.}}
(\byear{1948}).
\btitle{{A Class of Statistics with Asymptotically Normal Distribution}}.
\bjournal{The Annals of Mathematical Statistics}
\bvolume{19}
\bpages{293--325}.
\bdoi{10.1214/aoms/1177730196}
\end{barticle}
\endbibitem

\bibitem[\protect\citeauthoryear{Hoeffding and Robbins}{1948}]{Hoeffding1948}
\begin{barticle}[author]
\bauthor{\bsnm{Hoeffding},~\bfnm{Wassily}\binits{W.}} \AND
  \bauthor{\bsnm{Robbins},~\bfnm{Herbert}\binits{H.}}
(\byear{1948}).
\btitle{{The central limit theorem for dependent random variables}}.
\bjournal{Duke Mathematical Journal}
\bvolume{15}
\bpages{773--780}.
\bdoi{10.1215/S0012-7094-48-01568-3}
\end{barticle}
\endbibitem

\bibitem[\protect\citeauthoryear{Hofer}{2018}]{Hofer2017ACL}
\begin{barticle}[author]
\bauthor{\bsnm{Hofer},~\bfnm{Lisa}\binits{L.}}
(\byear{2018}).
\btitle{A Central Limit Theorem for Vincular Permutation Patterns}.
\bjournal{Discrete Mathematics \& Theoretical Computer Science}
\bvolume{Vol. 19 no. 2, Permutation Patterns 2016}.
\bdoi{10.23638/DMTCS-19-2-9}
\end{barticle}
\endbibitem

\bibitem[\protect\citeauthoryear{Jacquet and
  Szpankowski}{2015}]{jacquet2015analytic}
\begin{bbook}[author]
\bauthor{\bsnm{Jacquet},~\bfnm{Philippe}\binits{P.}} \AND
  \bauthor{\bsnm{Szpankowski},~\bfnm{Wojciech}\binits{W.}}
(\byear{2015}).
\btitle{Analytic Pattern Matching: From DNA to Twitter}.
\bpublisher{Cambridge University Press}, \baddress{Cambridge}.
\bdoi{10.1017/CBO9780511843204}
\end{bbook}
\endbibitem

\bibitem[\protect\citeauthoryear{Janisch and
  Leh{\'e}ricy}{2024}]{JanischLehericy2024}
\begin{barticle}[author]
\bauthor{\bsnm{Janisch},~\bfnm{Maximilian}\binits{M.}} \AND
  \bauthor{\bsnm{Leh{\'e}ricy},~\bfnm{Thomas}\binits{T.}}
(\byear{2024}).
\btitle{Berry--Esseen-Type Estimates for Random Variables with a Sparse
  Dependency Graph}.
\bjournal{Journal of Theoretical Probability}
\bvolume{37}
\bpages{3627--3653}.
\bdoi{10.1007/s10959-024-01363-z}
\end{barticle}
\endbibitem

\bibitem[\protect\citeauthoryear{Janson}{2023}]{janson_2023}
\begin{barticle}[author]
\bauthor{\bsnm{Janson},~\bfnm{Svante}\binits{S.}}
(\byear{2023}).
\btitle{{Asymptotic normality for $m$-dependent and constrained $U$-statistics,
  with applications to pattern matching in random strings and permutations}}.
\bjournal{Advances in Applied Probability}
\bvolume{55}
\bpages{841–894}.
\bdoi{10.1017/apr.2022.51}
\end{barticle}
\endbibitem

\bibitem[\protect\citeauthoryear{Kunszenti-Kov{\'a}cs, Lov{\'a}sz and
  Szegedy}{2022}]{kunszentikovacs2022multigraph}
\begin{barticle}[author]
\bauthor{\bsnm{Kunszenti-Kov{\'a}cs},~\bfnm{D{\'a}vid}\binits{D.}},
  \bauthor{\bsnm{Lov{\'a}sz},~\bfnm{L{\'a}szl{\'o}}\binits{L.}} \AND
  \bauthor{\bsnm{Szegedy},~\bfnm{Bal{\'a}zs}\binits{B.}}
(\byear{2022}).
\btitle{Multigraph limits, unbounded kernels, and Banach space decorated
  graphs}.
\bjournal{Journal of Functional Analysis}
\bvolume{282}
\bpages{109284}.
\bdoi{10.1016/j.jfa.2021.109284}
\end{barticle}
\endbibitem

\bibitem[\protect\citeauthoryear{L{\'e}vy}{1935}]{levy1935proprietes}
\begin{barticle}[author]
\bauthor{\bsnm{L{\'e}vy},~\bfnm{Paul}\binits{P.}}
(\byear{1935}).
\btitle{Propri{\'e}t{\'e}s asymptotiques des sommes de variables al{\'e}atoires
  ind{\'e}pendantes ou encha{\^\i}n{\'e}es}.
\bjournal{Journal de Math{\'e}matiques Pures et Appliqu{\'e}es}
\bvolume{14}
\bpages{347--402}.
\bnote{Available at
  \url{https://www.numdam.org/item/JMPA_1935_9_14_1-4_347_0/}}.
\end{barticle}
\endbibitem

\bibitem[\protect\citeauthoryear{Lin and Xi}{2010}]{LIN201016}
\begin{barticle}[author]
\bauthor{\bsnm{Lin},~\bfnm{N.}\binits{N.}} \AND
  \bauthor{\bsnm{Xi},~\bfnm{R.}\binits{R.}}
(\byear{2010}).
\btitle{{Fast surrogates of U-statistics}}.
\bjournal{{Computational Statistics $\&$ Data Analysis}}
\bvolume{54}
\bpages{16-24}.
\bdoi{https://doi.org/10.1016/j.csda.2009.08.009}
\end{barticle}
\endbibitem

\bibitem[\protect\citeauthoryear{Liu and Austern}{2023}]{Wasserstein-p(2023)}
\begin{barticle}[author]
\bauthor{\bsnm{Liu},~\bfnm{Tianle}\binits{T.}} \AND
  \bauthor{\bsnm{Austern},~\bfnm{Morgane}\binits{M.}}
(\byear{2023}).
\btitle{{Wasserstein-p bounds in the central limit theorem under local
  dependence}}.
\bjournal{Electronic Journal of Probability}
\bvolume{28}
\bpages{1--47}.
\bdoi{10.1214/23-EJP1009}
\end{barticle}
\endbibitem

\bibitem[\protect\citeauthoryear{Liu and
  Zhang}{2023}]{liuandzhang2023cramer-typemoderate}
\begin{barticle}[author]
\bauthor{\bsnm{Liu},~\bfnm{Song-Hao}\binits{S.-H.}} \AND
  \bauthor{\bsnm{Zhang},~\bfnm{Zhuo-Song}\binits{Z.-S.}}
(\byear{2023}).
\btitle{{Cramér-type moderate deviations under local dependence}}.
\bjournal{The Annals of Applied Probability}
\bvolume{33}
\bpages{4747--4797}.
\bdoi{10.1214/23-AAP1931}
\end{barticle}
\endbibitem

\bibitem[\protect\citeauthoryear{Lov{\'a}sz and
  Szegedy}{2010}]{lovasz2010limits}
\begin{bmisc}[author]
\bauthor{\bsnm{Lov{\'a}sz},~\bfnm{L{\'a}szl{\'o}}\binits{L.}} \AND
  \bauthor{\bsnm{Szegedy},~\bfnm{Bal{\'a}zs}\binits{B.}}
(\byear{2010}).
\btitle{Limits of compact decorated graphs}.
\bnote{Preprint. Available at \url{http://arxiv.org/abs/1010.5155}}.
\end{bmisc}
\endbibitem

\bibitem[\protect\citeauthoryear{Nicodème, Salvy and
  Flajolet}{2002}]{nicodeme2002motif}
\begin{barticle}[author]
\bauthor{\bsnm{Nicodème},~\bfnm{Pierre}\binits{P.}},
  \bauthor{\bsnm{Salvy},~\bfnm{Bruno}\binits{B.}} \AND
  \bauthor{\bsnm{Flajolet},~\bfnm{Philippe}\binits{P.}}
(\byear{2002}).
\btitle{Motif statistics}.
\bjournal{Theoretical Computer Science}
\bvolume{287}
\bpages{593--617}.
\bnote{Algorthims}.
\bdoi{https://doi.org/10.1016/S0304-3975(01)00264-X}
\end{barticle}
\endbibitem

\bibitem[\protect\citeauthoryear{Rai\v{c}}{2004}]{Raic2004}
\begin{barticle}[author]
\bauthor{\bsnm{Rai\v{c}},~\bfnm{M.}\binits{M.}}
(\byear{2004}).
\btitle{A Multivariate CLT for Decomposable Random Vectors with Finite Second
  Moments}.
\bjournal{Journal of Theoretical Probability}
\bvolume{17}
\bpages{573--603}.
\bdoi{10.1023/B:JOTP.0000040290.44087.68}
\end{barticle}
\endbibitem

\bibitem[\protect\citeauthoryear{{Rosenblatt}}{1956}]{PNAS...42...43R}
\begin{barticle}[author]
\bauthor{\bsnm{{Rosenblatt}},~\bfnm{M.}\binits{M.}}
(\byear{1956}).
\btitle{{a Central Limit Theorem and a Strong Mixing Condition}}.
\bjournal{Proceedings of the National Academy of Science}
\bvolume{42}
\bpages{43--47}.
\bdoi{10.1073/pnas.42.1.43}
\end{barticle}
\endbibitem

\bibitem[\protect\citeauthoryear{Ross}{2011}]{ross2011}
\begin{barticle}[author]
\bauthor{\bsnm{Ross},~\bfnm{Nathan}\binits{N.}}
(\byear{2011}).
\btitle{{Fundamentals of Stein’s method}}.
\bjournal{Probability Surveys}
\bvolume{8}
\bpages{210--293}.
\bdoi{10.1214/11-PS182}
\end{barticle}
\endbibitem

\bibitem[\protect\citeauthoryear{Ruci{\'n}ski}{1988}]{Rucinski1988}
\begin{barticle}[author]
\bauthor{\bsnm{Ruci{\'n}ski},~\bfnm{Andrzej}\binits{A.}}
(\byear{1988}).
\btitle{When are small subgraphs of a random graph normally distributed?}
\bjournal{Probability Theory and Related Fields}
\bvolume{78}
\bpages{1--10}.
\bdoi{10.1007/BF00718031}
\end{barticle}
\endbibitem

\bibitem[\protect\citeauthoryear{Shao and Zhang}{2022}]{shaoandzhang2022berry}
\begin{barticle}[author]
\bauthor{\bsnm{Shao},~\bfnm{Qi-Man}\binits{Q.-M.}} \AND
  \bauthor{\bsnm{Zhang},~\bfnm{Zhuo-Song}\binits{Z.-S.}}
(\byear{2022}).
\btitle{{Berry–Esseen bounds for multivariate nonlinear statistics with
  applications to M-estimators and stochastic gradient descent algorithms}}.
\bjournal{Bernoulli}
\bvolume{28}
\bpages{1548--1576}.
\bdoi{10.3150/21-BEJ1336}
\end{barticle}
\endbibitem

\bibitem[\protect\citeauthoryear{Shao and Zhang}{2025}]{ShaoZhang2025}
\begin{barticle}[author]
\bauthor{\bsnm{Shao},~\bfnm{Qi-Man}\binits{Q.-M.}} \AND
  \bauthor{\bsnm{Zhang},~\bfnm{Zhuo-Song}\binits{Z.-S.}}
(\byear{2025}).
\btitle{Berry-Esseen bounds for functionals of independent random variables}.
\bjournal{Stochastic Processes and their Applications}
\bvolume{184}
\bpages{104574}.
\bdoi{10.1016/j.spa.2025.104574}
\end{barticle}
\endbibitem

\bibitem[\protect\citeauthoryear{Shao and Zhou}{2016}]{shanandzhou2016}
\begin{barticle}[author]
\bauthor{\bsnm{Shao},~\bfnm{Qi-Man}\binits{Q.-M.}} \AND
  \bauthor{\bsnm{Zhou},~\bfnm{Wen-Xin}\binits{W.-X.}}
(\byear{2016}).
\btitle{{Cramér type moderate deviation theorems for self-normalized
  processes}}.
\bjournal{Bernoulli}
\bvolume{22}
\bpages{2029--2079}.
\bdoi{10.3150/15-BEJ719}
\end{barticle}
\endbibitem

\bibitem[\protect\citeauthoryear{Su, Ulyanov and
  Wang}{2025}]{SuUlyanovWang2025PerturbStein}
\begin{barticle}[author]
\bauthor{\bsnm{Su},~\bfnm{Z.}\binits{Z.}},
  \bauthor{\bsnm{Ulyanov},~\bfnm{V.~V.}\binits{V.~V.}} \AND
  \bauthor{\bsnm{Wang},~\bfnm{X.}\binits{X.}}
(\byear{2025}).
\btitle{On Approximation of Sums of Locally Dependent Random Variables via
  Perturbations of Stein Operator}.
\bjournal{Theory of Probability and Its Applications}
\bvolume{70}
\bpages{24--36}.
\bdoi{10.1137/S0040585X97T992215}
\end{barticle}
\endbibitem

\bibitem[\protect\citeauthoryear{Tem{\v{c}}inas, Nanda and
  Reinert}{2024}]{TemcinasNandaReinert2024}
\begin{barticle}[author]
\bauthor{\bsnm{Tem{\v{c}}inas},~\bfnm{Tadas}\binits{T.}},
  \bauthor{\bsnm{Nanda},~\bfnm{Vidit}\binits{V.}} \AND
  \bauthor{\bsnm{Reinert},~\bfnm{Gesine}\binits{G.}}
(\byear{2024}).
\btitle{Multivariate central limit theorems for random clique complexes}.
\bjournal{Journal of Applied and Computational Topology}
\bvolume{8}
\bpages{1837--1880}.
\bdoi{10.1007/s41468-023-00146-5}
\end{barticle}
\endbibitem

\bibitem[\protect\citeauthoryear{Zhang}{2024}]{zhang2021berryesseen}
\begin{barticle}[author]
\bauthor{\bsnm{Zhang},~\bfnm{Zhuo-Song}\binits{Z.-S.}}
(\byear{2024}).
\btitle{Berry-Esseen bounds for self-normalized sums of locally dependent
  random variables}.
\bjournal{Science China Mathematics}
\bvolume{67}
\bpages{2629--2652}.
\bdoi{10.1007/s11425-023-2189-9}
\end{barticle}
\endbibitem

\end{thebibliography}





\end{document}